\documentclass[11pt,twoside, a4paper]{amsart}
\usepackage[foot]{amsaddr}

\usepackage{mathabx}
\usepackage[utf8]{inputenc}
\usepackage{physics}
\usepackage{amsmath}
\usepackage{amssymb}
\usepackage{amsthm}
\usepackage{xcolor}
\usepackage{enumerate}
\usepackage{graphicx}
\usepackage{subfigure}

\usepackage{tikz}
\usetikzlibrary{shapes,decorations,backgrounds,arrows}
\usepackage[osf,sc]{mathpazo}
\usepackage[hidelinks]{hyperref}
\usepackage[linesnumbered,ruled]{algorithm2e}
\usepackage[authoryear]{natbib}


\usepackage{hyperref}
\usepackage{hypcap}

\setlength{\voffset}{-13 mm}
\setlength{\hoffset}{-35 mm}
\setlength{\oddsidemargin}{40 mm}
\setlength{\evensidemargin}{40 mm}
\setlength{\textwidth}{150 mm}
\setlength{\textheight}{220 mm}
\setlength{\parindent}{0pt}
\setlength{\parskip}{1.3ex}
\setlength{\headheight}{14pt}

\usepackage[capitalize]{cleveref}

\newtheorem{theorem}{Theorem}
\newtheorem{proposition}{Proposition}
\newtheorem{lemma}{Lemma}
\newtheorem{corollary}{Corollary}

\theoremstyle{definition}
\newtheorem{dfn}{Definition}
\newtheorem{assump}{Assumption}
\newtheorem{rem}{Remark}
\newtheorem{examp}{Example}

\DeclareMathOperator{\E}{\mathbb E}

\newcommand{\given}{\;\middle|\;}

\DeclareMathOperator{\Cov}{Cov}
\DeclareMathOperator{\Var}{Var}

\DeclareMathOperator*{\argmin}{arg\,min}

\renewcommand{\phi}{\varphi}
\renewcommand{\S}{\mathcal{S}}
\newcommand{\s}{\mathrm{sub}}
\newcommand{\sep}{\mathrm{sep}}
\newcommand{\bx}{\mathbf{x}}
\newcommand{\by}{\mathbf{y}}

\newcommand{\bX}{\mathbf{X}}

\newcommand{\bZ}{\mathbf{Z}}

\newcommand{\ex}{\mathbb{E}}

\renewcommand{\real}{\mathbb{R}}
\newcommand{\I}{\mathcal{I}}
\newcommand{\nat}{\mathbb{N}}
\renewcommand{\P}{\mathbb P}
\renewcommand{\epsilon}{\varepsilon}
\newcommand{\bbeta}{\boldsymbol{\beta}}

\setlength{\parindent}{0pt}
\setlength{\parskip}{1.3ex}

\title[Substitute Adjustment]{Substitute adjustment via recovery of 
latent variables}
\date{\today}

\author[J. Adams]{Jeffrey Adams$^\sharp$}
\email{ja@math.ku.dk}

\author[N. R. Hansen]{Niels Richard Hansen$^\sharp$}
\email{niels.r.hansen@math.ku.dk}

\address{$^\sharp$Department of Mathematical Sciences, University of Copenhagen \newline
Universitetsparken 5, Copenhagen, 2100, Denmark}

\begin{document}

\maketitle

\begin{abstract} The deconfounder was proposed as a method for estimating 
causal parameters in a context with multiple causes and unobserved 
confounding. It is based on recovery of a latent variable from the observed 
causes. We disentangle the causal interpretation from the statistical 
estimation problem and show that the deconfounder in general estimates
adjusted regression target parameters. It does so by outcome regression adjusted 
for the recovered latent variable termed the substitute. We refer to 
the general algorithm, stripped of causal assumptions, as substitute adjustment. 
We give theoretical results to support that substitute adjustment estimates 
adjusted regression parameters when the regressors are conditionally 
independent given the latent variable. We also introduce a variant of 
our substitute adjustment algorithm that estimates an assumption-lean 
target parameter with minimal model assumptions. We then give finite 
sample bounds and a\-symp\-totic results supporting substitute adjustment
estimation in the case where the latent variable takes values in a finite set. 
A simulation study illustrates finite sample properties of substitute adjustment.
Our results support that when the latent variable model of the regressors hold, 
substitute adjustment is a viable method for adjusted regression. 
\end{abstract}

\section{Introduction}

The deconfounder was proposed by \citet{wang2019blessings} as a general algorithm for estimating 
causal parameters via outcome regression when: (1) there are multiple observed 
causes of the outcome; (2) the causal effects are potentially confounded by a latent variable; 
(3) the causes are conditionally independent given a latent variable $Z$. 
The proposal spurred discussion and criticism; see the comments to \citep{wang2019blessings}
and the contributions by \citet{damour2019multi-cause, ogburn2020counterexamples} and \citet{Grimmer:2023}.
One question raised was whether the assumptions made by \citet{wang2019blessings}
are sufficient to claim that the deconfounder estimates a causal 
parameter. Though an amendment by \citet{wang2020towards} addressed the 
criticism and clarified their assumptions, it did not resolve all
questions regarding the deconfounder.

The key idea of the deconfounder is to recover the latent variable $Z$ from the 
observed causes and use this \emph{substitute confounder} as a replacement 
for the unobserved confounder. The causal parameter is then estimated by 
outcome regression using the substitute confounder for adjustment.
This way of adjusting for potential confounding 
has been in widespread use for some time in genetics and 
genomics, where, e.g.,  EIGENSTRAT based on PCA \citep{Patterson:2006, Price:2006} 
was proposed to adjust for population structure in genome wide 
association studies (GWASs); see also \citep{Song:2015}. Similarly,
surrogate variable adjustment \citep{Leek:2007} 
adjusts for unobserved factors causing unwanted variation in gene 
expression measurements. 

In our view, the discussion regarding the deconfounder 
was muddled by several issues. First, issues with non-identifiablity of target
parameters from the observational distribution with a 
\emph{finite} number of observed causes lead to confusion. 
Second, the causal role of the latent variable $Z$ and its causal relations to any unobserved confounder were difficult to grasp. Third, there was a lack of theory supporting that the deconfounder was actually 
estimating causal target parameters consistently. We defer the treatment of
the thorny causal interpretation of the deconfounder to the discussion in 
Section \ref{sec:discussion} and focus here on the statistical aspects. 

In our view, the statistical problem is best treated as 
\emph{adjusted regression} without insisting on a causal interpretation. 
Suppose that we observe a real valued outcome variable $Y$ and additional 
variables $X_1, X_2, \ldots, X_p$. We can then be interested in estimating 
the adjusted regression function 
\begin{equation} \label{eq:target-regression}
x \mapsto \ex\left[ \ex\left[Y \mid X_i = x; \bX_{-i} \right]\right]    
\end{equation}
where $\bX_{-i}$ denotes all variables but $X_i$. That is, we adjust 
for all other variables when regressing $Y$ on $X_i$. The adjusted 
regression function could have a causal interpretation in some 
contexts, but it is also of interest without a causal interpretation. 
It can, for instance, be used to study the added predictive value of 
$X_i$, and it is constant (as a function of $x$) if and only if 
$\ex\left[Y \mid X_i = x; \bX_{-i} \right] = 
\ex\left[Y \mid \bX_{-i} \right]$; 
that is, if and only if $Y$ is conditionally mean independent 
of $X_i$ given $\bX_{-i}$ \citep{lundborg2023projected}.

In the context of a GWAS, $Y$ is a continuous phenotype and $X_i$
represents a single nucleotide polymorphism (SNP)
at the genomic site $i$. The
regression function \eqref{eq:target-regression} quantifies 
how much a SNP at site $i$ adds to the prediction of the phenotype outcome 
on top of all other SNP sites. In practice, only a fraction of all SNPs
along the genome are observed, yet the number of SNPs can be 
in the millions, and estimation of the full regression model 
$\ex\left[Y \mid X_i = x; \bX_{-i} = \bx_{-i} \right]$ can be impossible
without model assumptions. Thus if the regression function  
\eqref{eq:target-regression} is the target of interest, it is 
extremely useful if we, by adjusting for a substitute of a latent variable,
can obtain a computationally efficient and statistically valid 
estimator of \eqref{eq:target-regression}.

From our perspective, when viewing the problem as that of adjusted regression, 
the most pertinent questions are: (1) when is adjustment by the latent 
variable $Z$ instead of $\bX_{-i}$ appropriate; (2) can adjustment by substitutes of the 
latent variable, recovered from the observe $X_i$-s, be justified; 
(3) can we establish an asymptotic theory that allows for 
statistical inference when adjusting for substitutes?

With the aim of answering the three questions above, this paper 
makes two main contributions:

{\bf A transparent statistical framework}. We focus on estimation of the adjusted mean, 
thereby disentangling the statistical problem from the causal discussion. This way 
the target of inference is clear and so are the assumptions we need about the observational 
distribution in terms of the latent variable model. We present in Section \ref{sec:setup} a general framework with an infinite number of $X_i$-variables, and we 
present clear assumptions implying that we can 
replace adjustment by $\bX_{-i}$ with adjustment by $Z$. 
Within the general framework, we subsequently present an 
assumption-lean target parameter that is interpretable without restrictive 
model assumptions on the regression function. 

{\bf A novel theoretical analysis}. By restricting attention to 
the case where the latent variable $Z$ takes values in a finite set,
we give in Section \ref{sec:finite_mixture_model_full} bounds on the estimation 
error due to using substitutes and on the recovery 
error---that is, the substitute mislabeling rate. These bounds quantify,
among other things, how the errors depend on $p$; the actual 
(finite) number of $X_i$-s used for recovery. 
With minimal assumptions on the conditional distributions in the latent 
variable model and on the outcome model, we use our bounds to derive 
asymptotic conditions 
ensuring that the assumption-lean target parameter can be estimated 
just as well using substitutes as if the latent variables were observed.

To implement substitute adjustment in practice, we leverage recent developments on 
estimation in finite mixture models via tensor methods, which are computationally 
and statistically efficient in high dimensions. We illustrate our results 
via a simulation study in Section \ref{sec:simulations}. Proofs and 
auxiliary results are in Appendix \ref{app:proofs}. Appendix \ref{app:kakutani}
contains a complete characterization of when recovery of $Z$ is possible 
from an infinite $\bX$ in a Gaussian mixture model.

\subsection{Relation to existing literature}

Our framework and results are based on ideas by \citet{wang2019blessings, wang2020towards}
and the literature preceding them on adjustment by surrogate/substitute variables. 
We add new results to this line of research on the theoretical justification 
of substitute adjustment as a method for estimation. 

There is some literature on the theoretical properties of tests and estimators 
in high-dimensional problems with latent variables. Somewhat related to our 
framework is the work by \cite{wang2017} on adjustment for latent confounders 
in multiple testing, motivated by applications to gene expression analysis.   
More directly related is the work by \cite{Cevid:2020} and \cite{Guo:2022}, 
who analyze estimators within a linear modelling framework with unobserved 
confounding. While their methods and results are definitely interesting, they 
differ from substitute adjustment, since they do 
not directly attempt to recover the latent variables. 
The linearity and sparsity assumptions, which we will not make, 
play an important role for their methods and analysis. 

The paper by \citet{Grimmer:2023} comes closest to our framework and analysis. 
\citet{Grimmer:2023} present theoretical results and extensive numerical 
examples, primarily with a continuous latent variable. Their results are 
not favorable for the deconfounder and they 
conclude that the deconfounder is ``not a viable substitute for careful research design in real-world applications''. Their theoretical analyses 
are mostly in terms of computing the population (or $n$-asymptotic) bias of a method 
for a finite $p$ (the number of $X_i$-variables), and then possibly investigate 
the limit of the bias as $p$ tends to infinity. Compared to this, we 
analyze the asymptotic behaviour of the estimator based on substitute 
adjustment as $n$ and $p$ tend to infinity jointly.
Moreover, since we specifically treat discrete latent variables, some of our 
results are also in a different framework.

\section{Substitute adjustment} \label{sec:setup}
\subsection{The General Model}\label{sec:model_general_form}

The full model is specified in terms of variables $(\bX, Y)$, where 
$Y\in \real$ is a real valued outcome variable of interest and 
$\bX \in \real^{\nat}$ is a infinite vector of additional real valued variables. 
That is, $\bX = (X_i)_{i \in \nat}$ with $X_i \in \real$ for $i \in \nat$.
We let $\bX_{-i} = (X_j)_{j \in \nat \backslash{\{i\}}}$, and define (informally) 
for each $i \in \nat$ and $x \in \real$ the target parameter of interest
\begin{equation} \label{eq:target0}
    \chi_x^i = \ex\left[\ex\left[Y \given X_i = x; \bX_{-i}\right] \right].
\end{equation}
That is, $\chi_x^i$ is the mean outcome given $X_i = x$ when adjusting for 
all remaining variables $\bX_{-i}$. Since $\ex\left[Y \mid X_i = x; \bX_{-i}\right]$
is generally not uniquely defined for all $x \in \real$ by the distribution of 
$(\bX, Y)$, we need some additional structure to formally define $\chi^i_x$. 
The following assumption and subsequent definition achieve 
this by assuming that a particular choice of the conditional expectation is
made and remains fixed. Throughout,
$\real$ is equipped with the Borel $\sigma$-algebra and $\real^\nat$ with
the corresponding product $\sigma$-algebra. 

\begin{assump}[Regular Conditional Distribution] \label{ass:regpos} \rm
Fix for each 
$i \in \nat$ a Markov kernel
$(P_{x, \bx}^i)_{(x, \bx) \in \real \times \real^\nat}$ on $\real$.
Assume that $P_{x, \bx}^i$ is the regular conditional distribution 
of $Y$ given $(X_i, \bX_{-i}) = (x, \bx)$ for all $x \in \real$, $\bx \in \real^\nat$ 
and $i \in \nat$. With $P^{-i}$ the distribution of $\bX_{-i}$, 
suppose additionally that 
\[
    \iint |y| \, P^i_{x, \bx} (\mathrm{d} y) P^{-i}(\mathrm{d} \bx) < \infty
\]
for all $x \in \real$.
\end{assump}



\begin{dfn} \label{dfn:chix} 
Under Assumption \ref{ass:regpos} we define  
\begin{equation} \label{eq:target}
    \chi_x^i = \iint y \, P^i_{x, \bx} (\mathrm{d} y)
    P^{-i} (\mathrm{d}\bx).
\end{equation}
\end{dfn}

\begin{rem} \rm 
Definition \ref{dfn:chix} makes the choice of conditional expectation explicit
by letting
\[\ex\left[Y \mid X_i = x; \bX_{-i}\right] = \int y \, P^i_{x, \bX_{-i}} (\mathrm{d} y)\]
be defined in terms of the specific regular conditional distribution 
that is fixed according to Assumption \ref{ass:regpos}. We may need additional 
regularity assumptions to identify this Markov kernel from the distribution of $(\bX, Y)$, which we will not pursue here.
\end{rem}

The main assumption in this paper is the existence of a latent variable, $Z$, that will 
render the $X_i$-s conditionally independent, and which can be recovered 
from $\bX$ in a suitable way. The variable $Z$ will take values in a measurable 
space $(E, \mathcal{E})$, which we assume to be a Borel space. We use the 
notation $\sigma(Z)$ and $\sigma(\bX_{-i})$ to denote the $\sigma$-algebras 
generated by $Z$ and $\bX_{-i}$, respectively. 

\begin{assump}[Latent Variable Model] \label{ass:variable} \rm 
There is a random variable 
$Z$ with values in $(E, \mathcal{E})$ such that: 
\begin{enumerate}
\item $X_1, X_2, \ldots$ are conditionally independent given $Z$,
\item $\sigma(Z) \subseteq \bigcap_{i=1}^{\infty} \sigma(\bX_{-i})$.
\end{enumerate}
\end{assump}

The latent variable model given by Assumption \ref{ass:variable} allows us to 
identify the adjusted mean by adjusting for the latent variable only.  


\begin{proposition} \label{prop:variablerep} Fix $i \in \nat$ and let 
$P^{-i}_{z}$ denote a regular conditional distribution of $\bX_{-i}$ given $Z = z$. 
Under Assumptions \ref{ass:regpos} and \ref{ass:variable}, the Markov kernel 
\begin{equation} \label{eq:Qkernel}
Q_{x,z}^i(A) = \int P^i_{x, \bx}(A) P^{-i}_z(\mathrm{d} \bx), \qquad A \subseteq \real
\end{equation}
is a regular conditional distribution of $Y$ given $(X_i,Z) = (x,z)$, in which case
\begin{equation} \label{eq:target2}
    \chi_x^i = \iint y \, Q_{x,z}^i (\mathrm{d} y) P^Z (\mathrm{d}z) = \ex\left[ \ex\left[ Y \mid X_i = x; Z \right] \right].
\end{equation}
\end{proposition}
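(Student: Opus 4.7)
The plan is to verify directly that $Q^i_{x,z}$ in \eqref{eq:Qkernel} satisfies the defining property of a regular conditional distribution of $Y$ given $(X_i, Z) = (x, z)$, and then derive \eqref{eq:target2} from \eqref{eq:target} by disintegrating $P^{-i}$ along $Z$ and applying Fubini's theorem. Measurability of $(x,z) \mapsto Q^i_{x,z}(A)$ is a routine consequence of the fact that $P^i$ is a Markov kernel in $(x,\bx)$ and $P^{-i}_z$ is a Markov kernel in $z$ (which exists because $\real^\nat$ is a Borel space and $(E,\mathcal{E})$ is Borel), so $Q^i$ is indeed a Markov kernel.

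The core step is to show that for every bounded measurable $f:\real \to \real$ and every bounded measurable test function $g: \real \times E \to \real$,
\begin{equation*}
    \ex\!\left[g(X_i,Z)\, f(Y)\right] \;=\; \ex\!\left[g(X_i,Z) \int f(y)\, Q^i_{X_i,Z}(\mathrm{d} y)\right].
\end{equation*}
To do this I would exploit the two parts of Assumption \ref{ass:variable} in sequence. First, Assumption \ref{ass:variable}(2) gives $\sigma(Z) \subseteq \sigma(\bX_{-i})$, so $g(X_i, Z)$ is $\sigma(X_i, \bX_{-i})$-measurable. Conditioning on $(X_i, \bX_{-i})$ and using that $P^i_{x,\bx}$ is a regular conditional distribution of $Y$ given $(X_i,\bX_{-i})=(x,\bx)$ converts the left-hand side into $\ex[g(X_i,Z) \int f(y)\, P^i_{X_i, \bX_{-i}}(\mathrm{d} y)]$. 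Second, Assumption \ref{ass:variable}(1) says $X_i \indep \bX_{-i} \mid Z$, so the conditional distribution of $\bX_{-i}$ given $(X_i, Z) = (x,z)$ coincides with $P^{-i}_z$. Conditioning the expression just obtained on $(X_i, Z)$ and integrating $\bx$ out against $P^{-i}_z$ reproduces $\int f(y)\, Q^i_{X_i,Z}(\mathrm{d} y)$ inside the outer expectation. This is exactly the identity above, and it characterises $Q^i_{x,z}$ as a version of the regular conditional distribution of $Y$ given $(X_i,Z)=(x,z)$.

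For the second identity in \eqref{eq:target2}, I would use that $P^{-i}(\mathrm{d}\bx) = \int P^{-i}_z(\mathrm{d}\bx)\, P^Z(\mathrm{d} z)$ by the disintegration underlying the regular conditional distribution of $\bX_{-i}$ given $Z$. Substituting into \eqref{eq:target} and exchanging the order of integration via Fubini's theorem, which is justified by the integrability condition in Assumption \ref{ass:regpos} (applied to $|y|$), one gets $\chi^i_x = \iint y\, Q^i_{x,z}(\mathrm{d} y)\, P^Z(\mathrm{d} z)$. The final equality $\chi^i_x = \ex[\ex[Y \mid X_i = x; Z]]$ then follows by reading $\int y\, Q^i_{x,z}(\mathrm{d} y)$ as the chosen version of $\ex[Y \mid X_i = x; Z = z]$ dictated by the kernel $Q^i$.

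The step I expect to require the most care is the use of conditional independence to identify the conditional law of $\bX_{-i}$ given $(X_i, Z)$ with $P^{-i}_z$, because one has to keep straight which conditioning $\sigma$-algebra is involved and to use that $Z$ is $\sigma(\bX_{-i})$-measurable so that conditioning on $Z$ is already encoded in the $\bx$-integration. Everything else is bookkeeping with Fubini and measurability, and the existence of the required regular conditional distributions is guaranteed by the Borel space assumption on $(E,\mathcal{E})$ together with $\real^\nat$ being Polish.
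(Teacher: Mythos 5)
Your proposal is correct and takes essentially the same route as the paper's proof: both first exploit Assumption \ref{ass:variable}(2) (so that $Z$ is a measurable function of $\bX_{-i}$ and conditioning on $(X_i,\bX_{-i})$ subsumes conditioning on $(X_i,Z)$), then use the conditional independence from Assumption \ref{ass:variable}(1) to integrate $\bX_{-i}$ out against $P^{-i}_z$, and finish with the same disintegration-plus-Fubini step for $\chi^i_x$. The only difference is presentational: you verify the defining identity against test functions via two applications of the tower property, whereas the paper computes $\P(X_i \in A, Z \in B, Y \in C)$ explicitly on product sets, using that $P^{-i}_z$ concentrates on the fiber $f_i^{-1}(\{z\})$.
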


\begin{figure}
    \centering
\begin{tikzpicture}[scale=1]
\tikzset{vertex/.style = {shape=circle,draw,minimum size=2em}}
\tikzset{edge/.style = {->,> = latex', thick}}

\node[vertex] (Z) at  (-2,0) {$Z$};
\node[vertex] (X1) at  (0,-1) {$X_i$};
\node[vertex] (bX) at  (0,1) {$\bX_{-i}$};
\node[vertex] (Y) at  (2,0) {$Y$};


\draw[edge] (Z) to (X1);
\draw[edge] (Z) to (bX);
\draw[edge] (X1) to (Y);
\draw[edge] (bX) to (Y);

\end{tikzpicture}
    \caption{Directed Acyclic Graph (DAG) representing the joint distribution of $(X_i, \bX_{-i}, Z, Y)$. The variable $Z$ blocks the backdoor from $X_i$ to $Y$. \label{fig:backdoor}}
\end{figure}
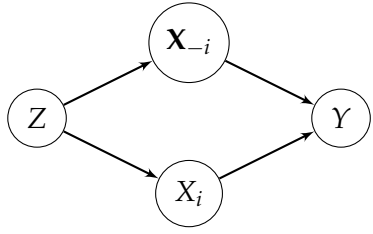

The joint distribution of $(X_i, \bX_{-i}, Z, Y)$ is, by Assumption \ref{ass:variable}, 
Markov w.r.t. to the graph in Figure \ref{fig:backdoor}. Proposition \ref{prop:variablerep}
is essentially the backdoor criterion, since $Z$ blocks the backdoor from $X_i$ to 
$Y$ via $\bX_{-i}$; see Theorem 3.3.2 in \citep{pearl2009causality} or Proposition 6.41(ii) in \citep{Peters2017}. Nevertheless, we include a proof in Appendix \ref{app:proofs} for two reasons. 
First, Proposition \ref{prop:variablerep} does not involve  
causal assumptions about the model, and we want to clarify that the mathematical result is 
agnostic to such assumptions. Second, the proof we give of Proposition \ref{prop:variablerep} does not require regularity assumptions, such as densities of the conditional 
distributions, but it relies subtly on Assumption \ref{ass:variable}(2).

\begin{examp} \label{ex:lin} \rm
Suppose $\ex[|X_i|] \leq C$  for all $i$ and 
some finite constant $C$, and assume, for simplicity, that $\ex[X_i] = 0$.
Let $\bbeta = (\beta_i)_{i \in \nat} \in \ell_1$ and define 
\[ \langle \bbeta, \bX \rangle = \sum_{i=1}^{\infty} \beta_i X_i.\]
The infinite sum converges almost surely since $\bbeta \in \ell_1$.
With $\epsilon$ being $\mathcal{N}(0,1)$-distributed and independent of $\bX$ 
consider the outcome model 
\[Y = \langle \bbeta, \bX \rangle  + \epsilon.\]
Letting $\bbeta_{-i}$ denote the $\bbeta$-sequence with the $i$-th coordinate removed, 
a straightforward, though slightly informal, computation, gives 
\begin{align*}
    \chi^i_x & = \ex\left[ \ex\left[ \beta_i X_i + \langle \bbeta_{-i}, \bX_{-i} \rangle
\mid X_i = x; \bX_{-i} \right]\right] \\
& = \beta_i x + \ex\left[ \langle \bbeta_{-i}, \bX_{-i} \rangle \right] 
= \beta_i x + \langle \bbeta_{-i}, \ex\left[ \bX_{-i} \right] \rangle
= \beta_i x.
\end{align*}

To fully justify the computation, via Assumption \ref{ass:regpos}, we let 
$P^{i}_{x, \bx}$
be the $\mathcal{N}(\beta_i x + \langle \bbeta_{-i}, \bx \rangle, 1)$-distribution for the 
$P^{-i}$-almost all $\bx$ where $\langle \bbeta_{-i}, \bx \rangle$ is well defined. 
For the remaining $\bx$ we let $P^{i}_{x, \bx}$ be the $\mathcal{N}(\beta_i x, 1)$-distribution.
Then $P^{i}_{x, \bx}$ is a regular conditional distribution of $Y$ given $(X_i,\bX_{-i}) = (x,\bx)$, 
\[\int y \, P_{x, \bx}^i (\mathrm{d} y) = \beta_i x + \langle \bbeta_{-i}, \bx \rangle \quad \text{for } P^{-i}\text{-almost all } \bx,\]
and $\chi^i_x = \beta_i x$ follows from \eqref{eq:target}. It also follows from 
\eqref{eq:Qkernel} that for $P^Z$-almost all $z \in E$,
\begin{align*}
    \ex\left[Y \mid X_i = x; Z = z\right] & = \int y \, Q^i_{x,z}(\mathrm{d} y) \\
    & = \beta_i x + \int \langle \bbeta_{-i}, \bx \rangle P^{-i}_z(\mathrm{d}\bx) \\
    & = \beta_i x + \sum_{j \neq i} \beta_j \ex[X_j \mid Z = z].
\end{align*}
That is, with
$\Gamma_{-i}(z) = \sum_{j \neq i} \beta_j \ex[X_j \mid Z = z]$, the regression model 
\[
    \ex\left[Y \mid X_i = x; Z = z\right] = \beta_i x + \Gamma_{-i}(z)
\] 
is a partially linear model. 
\end{examp}

\begin{examp} \rm 
While Example \ref{ex:lin} is explicit about the outcome model, it 
does not describe an explicit latent variable model fulfilling 
Assumption \ref{ass:variable}. To this end, take $E = \real$, let $Z', U_1, U_2, \ldots$ be 
i.i.d. $\mathcal{N}(0,1)$-distributed and set $X_i = Z' + U_i$. By the 
Law of Large Numbers, for any $i \in \nat$,
\[\frac{1}{n} \sum_{j = 1; j \neq i}^{n+1} X_j = Z' + \frac{1}{n} \sum_{j = 1; j \neq i}^{n+1} U_j
\rightarrow Z'\]
almost surely for $n \to \infty$. Setting 
\[ Z = \left\{ \begin{array}{ll} 
\lim\limits_{n \to \infty} \frac{1}{n} \sum_{j = 1; j \neq i}^{n+1} X_j & \quad \text{if the limit exists} \\ 0 & \quad \text{otherwise} \end{array} \right.
\]
we get that $\sigma(Z) \subseteq \sigma(\bX_{-i})$ for any $i \in \nat$ and 
$Z = Z'$ almost surely. Thus, Assumption \ref{ass:variable} holds.

Continuing with the outcome model from Example \ref{ex:lin}, we see that 
for $P^Z$-almost all $z \in E$,
\[ \ex[X_j \mid Z = z] = \ex[Z' + U_j \mid Z = z] = z,\]
thus $\Gamma_{-i}(z) = \gamma_{-i} z$ with $\gamma_{-i} = \sum_{j \neq i} \beta_j$.
In this example it is actually possible to compute the regular conditional 
distribution, $Q^i_{x,z}$, of $Y$ given $(X_i, Z) = (x, z)$ explicitly. It is the  
$\mathcal{N}\left(\beta_i x + \gamma_{-i} z, 1 + \| \bbeta_{-i} \|^2_2 \right)$-distribution
where \mbox{$\| \bbeta_{-i} \|^2_2 = \langle \bbeta_{-i}, \bbeta_{-i} \rangle$}.
\end{examp}

\subsection{Substitute Latent Variable Adjustment}
\label{sec:deconf_general_form}

Proposition \ref{prop:variablerep} tells us that under 
Assumptions 1 and 2 the adjusted mean, $\chi_x^i$, defined by adjusting for 
the entire infinite vector $\bX_{-i}$, is also given by adjusting 
for the latent variable $Z$. If the latent variable were observed 
we could estimate $\chi^i_x$ in terms of 
an estimate of the following regression function. 

\begin{dfn}[Regression function] \label{dfn:b} Under Assumptions 1 and 2 define the 
regression function 
\begin{equation}
    b_x^i(z) = \int y \, Q_{x,z}^i(\mathrm{d}y) = \ex\left[Y \mid X_i = x; Z = z\right]
\end{equation}
where $Q_{x,z}^i$ is given by \eqref{eq:Qkernel}.
\end{dfn}

If we had $n$ i.i.d. observations, 
$(x_{i,1}, z_1, y_1), \ldots, (x_{i,n}, z_n, y_n)$, of $(X_i, Z, Y)$,
a straightforward plug-in estimate of $\chi^i_x$ is  
\begin{equation}
    \hat{\chi}^i_x = \frac{1}{n} \sum_{k=1}^n \hat{b}_x^i(z_k),
\end{equation}
where $\hat{b}_x^i(z)$ is an estimate of the regression function 
$b_x^i(z)$. 
In practice we do not observe the latent variable $Z$. Though Assumption \ref{ass:variable}(2) 
implies that $Z$ can be recovered from $\bX$, we do not assume we know 
this recovery map, nor do we in practice observe the entire $\bX$, but only the 
first $p$ coordinates, $\bX_{1:p} = (X_1, \ldots, X_p)$. 

We thus need an 
estimate of a recovery map, $\hat{f}^p : \real^p \to E$, such 
that for the \emph{substitute latent variable}
$\hat{Z} = \hat{f}^p(\bX_{1:p})$ we have\footnote{We can in general only hope 
to learn a recovery map of $Z$ up to a Borel isomorphism, but this is 
also all that is needed, cf. Assumption \ref{ass:variable}.} that $\sigma(\hat{Z})$ 
approximately contains the same information as $\sigma(Z)$.  
Using such substitutes, a natural way to estimate $\chi^i_x$ is given by Algorithm \ref{alg:generalalg}, 
which is a general three-step procedure returning the estimate $\widehat{\chi}^{i,\s}_x$.

\begin{algorithm}[t]
    \caption{General Substitute Adjustment} \label{alg:generalalg}
    \textbf{input:} data $\S_0 = \{\bx_{1:p,1}^0, \ldots, \bx_{1:p,m}^0\}$ and 
    $\S = \{(\bx_{1:p,1},y_1), \ldots (\bx_{1:p,n}, y_{n})\}$, a 
    set $E$, $i \in \{1, \ldots, p\}$ and $x \in \real$\;
    \textbf{options:} a method for estimating a recovery map $f^p : \real^p \to E$, 
    a method for estimating the regression function $z \mapsto b_x^i(z)$\;
    \Begin{
    use data in $\S_0$ to compute the estimate $\hat f^p$
    of the recovery map.
    
    use data in $\S$ to
    compute the substitute latent variables as $\hat{z}_k := \hat f^p(\bx_{1:p,k})$, 
    $k = 1, \ldots, n$.

    use data in $\S$ combined with the substitutes to compute the regression 
    function estimate, $z \mapsto \hat{b}_x^i(z)$, and set
    \begin{equation} \nonumber
        \widehat{\chi}^{i,\s}_x = \frac{1}{n} \sum_{k=1}^{n} 
        \hat{b}_x^i(\hat{z}_k).
    \end{equation}
    }
    \Return{$\widehat{\chi}^{i,\s}_x$}
\end{algorithm}

The regression estimate $\hat{b}_x^i(z)$ in Algorithm \ref{alg:generalalg} 
is computed on the basis of the substitutes, which likewise enter into 
the final computation of $\widehat{\chi}^{i,\s}_x$. Thus the estimate 
is directly estimating $\chi^{i,\s}_x = \ex\left[ \ex\left[Y \mid X_i = x; \hat{Z}\right] \given \hat{f}^p \right]$, and it is expected to be biased as an estimate of 
$\chi^i_x$. The general idea is that under 
some regularity assumptions, and for $p \to \infty$ and $m \to \infty$ appropriately,
$\chi^{i,\s}_x \to \chi^i_x$ and the bias vanishes asymptotically. 
Section \ref{sec:finite_mixture_model_full} specifies a setup where such a 
result is shown rigorously. 

Note that the estimated recovery map $\hat{f}^p$ in Algorithm \ref{alg:generalalg} 
is the same for all $i = 1, \ldots, p$. Thus for any fixed $i$, the $x_{i,k}^0$-s are 
used for estimation of the recovery map, and the $x_{i,k}$-s are used 
for the computation of the substitutes. Steps 4 and 5 of the algorithm
could be changed to construct a recovery map $\hat{f}_{-i}^p$ 
independent of the $i$-th coordinate. This appears to align better with 
Assumption \ref{ass:variable}, and it would most likely make the $\hat{z}_k$-s 
slightly less correlated with the $x_{i,k}$-s. It would, on the other hand, lead 
to a slightly larger recovery error, and worse, a substantial increase in 
the computational complexity if we want to estimate $\widehat{\chi}_x^{i,\s}$
for all $i = 1, \ldots, p$.

Algorithm \ref{alg:generalalg} leaves some options open. First, the estimation method 
used to compute $\hat f^p$ could be based on any method for estimating a
recovery map, e.g., using a factor model if $E = \real$ or a mixture model if $E$ is finite. 
The idea of such methods is to compute a parsimonious $\hat f^p$ such that: (1) conditionally 
on $\hat{z}^0_k = \hat f^p(\bx_{1:p,k}^0)$ the observations $x_{1,k}^0, \ldots, x_{p,k}^0$ 
are approximately independent for $k = 1, \ldots, m$; and (2) $\hat{z}^0_k$
is minimally predictive of $x_{i,k}^0$ for $i = 1, \ldots, p$.
Second, the regression method for estimation of the regression 
function $b_x^i(z)$ could be any parametric or nonparametric 
method. If $E = \real$ we could use OLS combined with the parametric model 
$b_x^i(z) = \beta_0 + \beta_i x + \gamma_{-i} z$, which would lead to 
the estimate 
\[\widehat{\chi}^{i,\s}_x = \hat{\beta}_0 + \hat{\beta}_i x + 
\hat{\gamma}_{-i} \frac{1}{n} \sum_{k=1}^{n} \hat{z}_k.\]
If $E$ is finite, we could still use OLS but now combined with the parametric model 
$b_x^i(z) = \beta_{i,z}' x + \gamma_{-i,z}$, 
which would lead to the estimate 
\[\widehat{\chi}^{i,\s}_x =  \left( \frac{1}{n} \sum_{k=1}^{n} 
\hat{\beta}_{i,\hat{z}_k}' \right) x + \frac{1}{n} \sum_{k=1}^{n} 
\hat{\gamma}_{-i, \hat{z}_k}.\]

The relation between the two datasets in Algorithm \ref{alg:generalalg} is not 
specified by the algorithm either. It is possible that they are 
independent, e.g., by data splitting, in which case $\hat{f}^p$ is 
independent of the data in $\S$. It is also possible that $m = n$ and 
$\bx_{1:p,k}^0 = \bx_{1:p,k}$ for $k = 1, \ldots, n$. While we will assume 
$\S_0$ and $\S$ independent for the theoretical analysis, the $\bx_{1:p}$-s 
from $\S$ will  in practice often be part of $\S_0$, if not all of $\S_0$.

\subsection{Assumption-Lean Substitute Adjustment}
If the regression model in the general Algorithm \ref{alg:generalalg} 
is misspecified we cannot expect that 
$\widehat{\chi}^{i,\s}_x$ is a consistent estimate of 
$\chi^i_x$. In Section \ref{sec:finite_mixture_model_full} we investigate 
the distribution of a substitute adjustment estimator in the case where $E$
is finite. It is possible to carry out this investigation assuming a partially 
linear regression model, $b_x^i(z) = \beta_{i} x + \Gamma_{-i}(z)$, but the 
results would then hinge on this model being correct. To circumvent 
such a model assumption we proceed instead in the spirit of \emph{assumption-lean 
regression} \citep{Berk:2021, Vansteelandt:2022}. Thus we focus on a 
univariate target parameter defined as a functional of the data distribution,
and we then investigate its estimation via substitute adjustment. 

\begin{assump}[Moments] \label{ass:moment} \rm It holds that 
$\ex(Y^2) < \infty$, \mbox{$\ex[X_i^2] < \infty$} and 
$\ex\left[\Var \left[X_i \mid Z \right] \right] > 0$.
\end{assump}

\begin{dfn}[Target parameter] \label{dfn:beta-i}
Let $i \in \nat$. Under Assumptions \ref{ass:variable} and \ref{ass:moment}
define the target parameter
\begin{equation} \label{eq:target-beta}
    \beta_i = \frac{\E\left[\Cov \left[X_i, Y \mid Z \right] \right]}
{\E\left[\Var \left[X_i \mid Z \right] \right]}.
\end{equation}
\end{dfn}

\begin{algorithm}
    \caption{Assumption-Lean Substitute Adjustment} \label{alg:leanalg}
    \textbf{input:} data $\S_0 = \{\bx_{1:p,1}^0, \ldots, \bx_{1:p,m}^0\}$ and 
    $\S = \{(\bx_{1:p,1},y_1), \ldots (\bx_{1:p,n}, y_{n})\}$, a set $E$ and 
    $i \in \{1, \ldots, p\}$\;
    \textbf{options:} a method for estimating the recovery map $f^p : \real^p \to E$, 
    methods for estimating the regression functions $\mu_i(z) = \ex\left[X_i \mid Z = z\right]$
    and $g(z) = \ex\left[Y \mid Z = z\right]$\;
    \Begin{
    use data in $\S_0$ to compute the estimate $\hat f^p$ of the recovery map. 
    
    use data in $\S$ to 
    compute the substitute latent variables as $\hat{z}_k := \hat f^p(\bx_{1:p,k})$, 
    $k = 1, \ldots, n$.

    use data in $\S$ combined with the substitutes to compute the regression function estimates $z \mapsto \hat{\mu}_i(z)$ and $z \mapsto \hat{g}(z)$, and set
    \begin{equation} \nonumber
        \widehat{\beta}^{\s}_i = \frac{
        \sum_{k=1}^{n} (x_{i,k}- \hat{\mu}_i(\hat{z}_k))(y_k - \hat{g}(\hat{z}_k))
    }{
        \sum_{k=1}^{n} (x_{i,k}- \hat{\mu}_i(\hat{z}_k))^2
    }.
    \end{equation}
    }
    \Return{$\widehat{\beta}^{\s}_i$}
\end{algorithm}

Algorithm \ref{alg:leanalg} gives a procedure for estimating 
$\beta_i$ based on substitute latent variables. The following 
proposition gives insight on the interpretation of the 
target parameter $\beta_i$.

\begin{proposition} \label{prop:lean-rep}
Under Assumptions 1, 2 and 3, and with $b_x^i(z)$ given as in Definition \ref{dfn:b}, 
and $\beta_i$ given as in Definition \ref{dfn:beta-i},
\begin{equation} \label{eq:beta-i-rep}
    \beta_i = \frac{\E\left[\Cov \left[X_i, b^i_{X_i}(Z) \mid Z \right] \right]}
{\E\left[\Var \left[X_i \mid Z \right] \right]}.
\end{equation}
Moreover, $\beta_i = 0$ if $b^i_{x}(z)$ does not depend on $x$. If 
$b^i_{x}(z) = \beta_i'(z) x + \Gamma_{-i}(z)$
then 
\begin{equation} \label{eq:beta-i-rep-lin}
    \beta_i = \ex\left[w_i(Z) \beta_i'(Z)\right]
\end{equation}
where 
\[w_i(Z) = \frac{\Var[X_i \mid Z]}{\ex\left[\Var[X_i \mid Z] \right]}.\]
\end{proposition}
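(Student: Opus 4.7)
The plan is to establish \eqref{eq:beta-i-rep} via a tower property argument and then observe that claims (2) and (3) drop out as immediate corollaries.

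For \eqref{eq:beta-i-rep}, the only thing to show is that $\Cov[X_i, Y \mid Z] = \Cov[X_i, b^i_{X_i}(Z) \mid Z]$ almost surely, since the denominators already agree. I would expand the conditional covariance as $\E[X_i Y \mid Z] - \E[X_i \mid Z]\E[Y \mid Z]$, and then apply the tower property twice: conditioning first on $(X_i, Z)$ inside $\E[\cdot \mid Z]$ gives
\begin{equation*}
\E[X_i Y \mid Z] = \E\!\left[X_i \E[Y \mid X_i, Z] \mid Z\right] = \E[X_i\, b^i_{X_i}(Z) \mid Z],
\end{equation*}
and similarly $\E[Y \mid Z] = \E[b^i_{X_i}(Z) \mid Z]$. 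Substituting into the expanded covariance yields the claim. Here $b^i_{X_i}(Z)$ is a well-defined real-valued random variable because $b^i_x(z) = \int y\, Q^i_{x,z}(\mathrm{d}y)$ is measurable in $(x,z)$ by the Markov-kernel property of $Q^i$, and it is integrable under Assumption \ref{ass:moment} by Cauchy--Schwarz combined with the tower property.

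For claim (2), if $b^i_x(z) = h(z)$ is independent of $x$, then $b^i_{X_i}(Z) = h(Z)$ is $\sigma(Z)$-measurable, so $\Cov[X_i, b^i_{X_i}(Z) \mid Z] = 0$ almost surely, and \eqref{eq:beta-i-rep} forces $\beta_i = 0$. For claim (3), if $b^i_x(z) = \beta_i'(z)x + \Gamma_{-i}(z)$ then $b^i_{X_i}(Z) = \beta_i'(Z)X_i + \Gamma_{-i}(Z)$, and pulling the $Z$-measurable factors out of the conditional covariance gives
\begin{equation*}
\Cov[X_i, b^i_{X_i}(Z) \mid Z] = \beta_i'(Z)\, \Var[X_i \mid Z] + \Cov[X_i, \Gamma_{-i}(Z) \mid Z] = \beta_i'(Z)\, \Var[X_i \mid Z],
\end{equation*}
since $\Gamma_{-i}(Z)$ is $\sigma(Z)$-measurable. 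Taking expectations and dividing by $\E[\Var[X_i \mid Z]]$ yields \eqref{eq:beta-i-rep-lin}.

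The only place that requires any care is verifying the tower step under the minimal Assumption \ref{ass:moment}; one should check that $X_i b^i_{X_i}(Z)$ and $X_i Y$ are integrable so that the conditional expectations and their manipulations are all legitimate. This follows from Cauchy--Schwarz together with $\E[Y^2] < \infty$ and $\E[X_i^2] < \infty$ (and the conditional Jensen inequality to bound $\E[b^i_{X_i}(Z)^2] \leq \E[Y^2]$). Beyond this measure-theoretic bookkeeping, the proof is a short computation and I do not expect a substantive obstacle.
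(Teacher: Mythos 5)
Your proposal is correct and follows essentially the same route as the paper: the paper also derives \eqref{eq:beta-i-rep} by the tower property applied after conditioning on $(X_i, Z)$ (writing $\Cov[X_i, Y \mid Z] = \ex[(X_i - \ex[X_i \mid Z])Y \mid Z]$ rather than expanding the covariance into two terms, but this is the same computation), and handles claims (2) and (3) exactly as you do. Your extra attention to integrability and the measurability of $(x,z) \mapsto b^i_x(z)$ is a welcome addition that the paper leaves implicit.
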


We include a proof of Proposition \ref{prop:lean-rep} in Appendix \ref{app:model_proofs} 
for completeness. The arguments are essentially as in \citep{Vansteelandt:2022}.

\begin{rem} \rm
If $b^i_{x}(z) = \beta_i'(z) x + \Gamma_{-i}(z)$ it follows from Proposition \ref{prop:variablerep} that $\chi^i_x = \beta'_i x$, where the coefficient  
$\beta'_i = \ex[\beta_i'(Z)]$ may differ from $\beta_i$ given by \eqref{eq:beta-i-rep-lin}. 
In the special case where the variance of $X_i$ given $Z$ is constant across all values of $Z$, 
the weights in \eqref{eq:beta-i-rep-lin} are all $1$, in which case $\beta_i = \beta'_i$.
For the partially linear model, $b^i_{x}(z) = \beta_i' x + \Gamma_{-i}(z)$, 
with $\beta_i'$ not depending on $z$, it follows from \eqref{eq:beta-i-rep-lin}
that $\beta_i = \beta_i'$ irrespectively of the weights.
\end{rem}

\begin{rem} \rm
If $X_i \in \{0, 1\}$ then $b_x^i(z) = (b_1^i(Z) - b_0^i(Z))x + b_0^i(Z)$,
and the contrast $\chi^i_1 - \chi^i_0 = \ex\left[b_1^i(Z) - b_0^i(Z)\right]$ is an 
unweighted mean of differences, while it follows from \eqref{eq:beta-i-rep-lin} that 
\begin{equation} \label{eq:beta-i-rep-dic}
    \beta_i = \ex\left[w_i(Z) (b_1^i(Z) - b_0^i(Z))\right].
\end{equation}
If we let $\pi_i(Z) = \P(X_i = 1 \mid Z)$, we see that the weights are given as 
\[w_i(Z) = \frac{\pi_i(Z)(1-\pi_i(Z))}{\ex\left[\pi_i(Z)(1-\pi_i(Z))\right]}.\]
\end{rem}

We summarize three important take-away messages from Proposition \ref{prop:lean-rep} and the remarks above as follows:

\begin{description}
    \item[Conditional mean independence] 
    The null hypothesis of conditional mean independence, 
    \[\ex\left[Y \mid X_i = x; \mathbf{X}_{-i}\right]) = 
    \ex\left[Y \mid \mathbf{X}_{-i}\right],\] implies that 
    $\beta_i = 0$. The target parameter $\beta_i$ thus suggests an assumption-lean 
    approach to testing this null without a specific model of the 
    conditional mean.
    \item[Heterogeneous partial linear model] 
    If the conditional mean, 
    \[b_x^i(z) = \ex\left[Y \mid X_i = x; Z = z\right],\]
    is linear in $x$ with an $x$-coefficient that 
    depends on $Z$ (heterogeneity), the target parameter $\beta_i$ is a \emph{weighted} 
    mean of these coefficients, while $\chi^i_x = \beta'_i x$ with $\beta'_i$ the 
    \emph{unweighted} mean. 
    \item[Simple partial linear model] 
    If the conditional mean is linear in $x$ with an 
    $x$-coef{-}ficient that is \emph{independent} of $Z$ (homogeneity), 
    the target parameter $\beta_i$ coincides with this $x$-coefficient 
    and $\chi_x^i = \beta_i x$. Example \ref{ex:lin} is a special case where 
    the latent variable model is arbitrary but the full outcome model is linear.
\end{description}

Just as for the general Algorithm \ref{alg:generalalg}, the estimate that 
Algorithm \ref{alg:leanalg} outputs, $\widehat{\beta}^{\s}_i$, is not 
directly estimating the target parameter $\beta_i$. It is directly
estimating  
\begin{equation}
    \beta^{\s}_i =\frac{\E\left[\Cov \left[X_i, Y \mid \hat{Z} \right] \given \hat{f}^p \right]}
{\E\left[\Var \left[X_i \mid \hat{Z} \right] \given \hat{f}^p \right]}.
\end{equation}
Fixing the estimated recovery map $\hat{f}^p$ and letting $n \to \infty$, we 
can expect that $\widehat{\beta}^{\s}_i$ is consistent for $\beta^{\s}_i$
and not for $\beta_i$.

Pretending that the $z_k$-s were observed, we introduce the oracle estimator
\begin{equation} \nonumber
        \widehat{\beta}_i = \frac{
        \sum_{k=1}^{n} (x_{i,k}- \overline{\mu}_i(z_k))(y_k - \overline{g}(z_k))
    }{
        \sum_{k=1}^{n} (x_{i,k}- \overline{\mu}_i(z_k))^2
    }.
\end{equation}
Here, $\overline{\mu}_i$ and $\overline{g}$ denote estimates of the regression 
functions $\mu_i$ and $g$, respectively, using the $z_k$-s instead of the substitutes. 
The estimator $\widehat{\beta}_i$ is independent of $m$, $p$, and $\hat{f}^p$, and 
when $(x_{i,1}, z_1, y_1), \ldots, (x_{i,n}, z_n, y_n)$ are i.i.d. observations, 
standard regularity assumptions \citep{vanderVaart:1998} will ensure that the estimator $\widehat{\beta}_i$ 
is consistent for $\beta_i$ (and possibly even $\sqrt{n}$-rate asymptotically normal).
Writing
\begin{equation}\label{eq:beta_hat_tilde_decomposition}
    \widehat{\beta}^{\s}_i - \beta_i = (\widehat{\beta}^{\s}_i - \widehat \beta_i) + (\hat\beta_i - \beta_i)
\end{equation}
we see that if we can appropriately bound the error, 
$|\widehat{\beta}^{\s}_i - \widehat \beta_i|$, due 
to using the substitutes instead of the unobserved $z_k$-s, we can transfer asymptotic 
properties of $\hat\beta_i$ to $\widehat{\beta}^{\s}_i$. It is 
the objective of the following section to demonstrate how such a bound can 
be achieved for a particular model class.

\section{Substitute adjustment in a mixture model}
\label{sec:finite_mixture_model_full}
In this section, we present a theoretical analysis of assumption-lean substitute adjustment 
in the case where the latent variable takes values in a finite set. We provide finite-sample bounds on the error of $\widehat{\beta}^{\s}_i$ 
due to the use of substitutes, and we 
show, in particular, that there exist trajectories of $m$, $n$ and $p$ along which 
the estimator is asymptotically equivalent to the oracle estimator $\widehat{\beta}_i$,  
which uses the actual latent variables. 

\subsection{The mixture model} \label{sec:mixture}
To be concrete, we assume that $\bX$ is generated by a finite mixture model such that 
conditionally on a latent variable $Z$ with values in a finite set, the coordinates of 
$\bX$ are independent. The precise model specification is as follows.

\begin{assump}[Mixture Model] \label{ass:mixture} \rm
There is a latent variable $Z$ with values in the finite set $E = \{1, \ldots, K\}$ 
such that $X_1, X_2, \ldots$ are conditionally independent given $Z = z$. Furthermore, 
\begin{enumerate}
    \item The conditional distribution of $X_i$ given $Z = z$ has finite second moment, 
    and its conditional mean and variance are denoted
    \begin{align*} 
        \mu_i(z) & = \ex[X_i \mid Z = z] \\
        \sigma_i^2(z) & = \Var [X_i \mid Z = z]
    \end{align*}
for $z \in E$ and $i \in \nat$.
    \item The conditional means satisfy the following \emph{separation} condition
    \begin{equation} \label{eq:sep}
\sum_{i=1}^{\infty} (\mu_i(z) - \mu_i(v))^2 = \infty
\end{equation}
for all $z, v \in E$ with $v \neq z$.
\item There are constants  $0 < \sigma_{\min}^2 \leq \sigma^2_{\max} < \infty$ that bound the conditional variances;
\begin{equation} \label{eq:sigmabound}
\sigma_{\min}^2 \leq \max_{z \in E} \sigma_i^2(z) \leq \sigma^2_{\max}
\end{equation}
for all $i \in \nat$.
\item $\P(Z = z) > 0$ for all $z \in E$.
\end{enumerate}
\end{assump}

Algorithm \ref{alg:leanalgmix} is one specific version of Algorithm \ref{alg:leanalg} 
for computing $\hat{\beta}_i^{\s}$ when the latent variable takes values in a finite set $E$. 
The recovery map in Step 5 is given by computing the nearest mean, and it is thus 
estimated in Step 4 by estimating the means for each of the mixture components. How 
this is done precisely is an option of the algorithm. Once the substitutes are computed,
outcome means and $x_{i,k}$-means are (re)computed within each component. The 
computations in Steps 6 and 7 of Algorithm \ref{alg:leanalgmix} result in the same 
estimator as the OLS estimator of $\beta_i$ when it is computed using the linear model 
\[
b_x^i(z) = \beta_i x + \gamma_{-i, z}, \qquad \beta_i, \gamma_{-i, 1}, \ldots, \gamma_{-i,K} \in \real 
\]
on the data $(x_{i,1}, \hat{z}_1,y_1), \ldots (x_{i,n}, \hat{z}_n, y_{n})$. This may be 
relevant in practice, but it is also used in the proof of Theorem \ref{thm:error_bound}.
The corresponding oracle estimator, $\hat{\beta}_i$, is similarly an OLS estimator.

\begin{algorithm}
    \caption{Assumption Lean Substitute Adjustment w. Mixtures} \label{alg:leanalgmix}
    \textbf{input:} data $\S_0 = \{\bx_{1:p,1}^0, \ldots, \bx_{1:p,m}^0\}$ and 
    $\S = \{(\bx_{1:p,1},y_1), \ldots (\bx_{1:p,n}, y_{n})\}$, a finite set $E$
    and $i \in \{1, \ldots, p\}$\;
    \textbf{options:} a method for estimating the conditional means 
    $\mu_j(z) = \ex[X_j \mid Z = z]$\;
    \Begin{
    use the data in $\S_0$ to compute the estimates $\check{\mu}_j(z)$
    for $j \in \{1, \ldots, p\}$ and $z \in E$. 
    
    use the data in $\S$ to
    compute the substitute latent variables as $\hat{z}_k = 
    \argmin_{z} \|\bx_{1:p, k} - \check{\boldsymbol{\mu}}_{1:p}(z)\|_2$, 
    $k = 1, \ldots, n$.

    use the data in $\S$ combined with the substitutes to compute the estimates
    \begin{align*}
        \hat{g}(z) & = \frac{1}{\hat{n}(z)} \sum_{k: \hat{z}_k = z} y_k, \quad z \in E \\
        \hat{\mu}_i(z) & = \frac{1}{\hat{n}(z)} \sum_{k: \hat{z}_k = z} x_{i,k}, \quad z \in E,
    \end{align*}
    where $\hat{n}(z) = \sum_{k=1}^n 1(\hat{z}_k = z)$ is the number of $k$-s with $\hat{z}_k = z$.

    use the data in $\S$ combined with the substitutes to compute 
    \begin{equation} \nonumber
        \widehat{\beta}^{\s}_i = \frac{
        \sum_{k=1}^{n} (x_{i,k}- \hat{\mu}_i(\hat{z}_k))(y_k - \hat{g}(\hat{z}_k))
    }{
        \sum_{k=1}^{n} (x_{i,k}- \hat{\mu}_i(\hat{z}_k))^2
    }.
    \end{equation}
    }
    \Return{$\widehat{\beta}^{\s}_i$}
\end{algorithm}

Note that Assumption \ref{ass:mixture} implies that 
\begin{align*}
    \ex[X_i^2] & = \sum_{z \in E} \ex[X_i^2 \mid Z = z] \P(Z = z) = 
    \sum_{z \in E} (\sigma_i^2(z) + \mu_i(z)^2)\P(Z = z) < \infty \\
    \ex\left[\Var \left[X_i \mid Z \right] \right] & = 
    \sum_{z \in E} \sigma_i^2(z) \P(Z = z)  \geq \sigma_{\min}^2 \min_{z \in E} \P(Z = z) > 0.
\end{align*}
Hence Assumption \ref{ass:mixture}, combined with $\ex[Y^2] < \infty$, ensure that 
the moment conditions in Assumption \ref{ass:moment} hold.

The following proposition states that the mixture model given by Assumption \ref{ass:mixture}
is a special case of the general latent variable model. 

\begin{proposition} \label{prop:recover}
Assumption \ref{ass:mixture} on the mixture model 
implies Assumption \ref{ass:variable}. Specifically, that $\sigma(Z) \subseteq \sigma(\bX_{-i})$ for all $i \in \nat$.
\end{proposition}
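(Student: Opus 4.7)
The first part of Assumption \ref{ass:variable}---conditional independence of $X_1, X_2, \ldots$ given $Z$---is part of Assumption \ref{ass:mixture}. The plan is therefore to prove only that $\sigma(Z) \subseteq \sigma(\bX_{-i})$ for each fixed $i \in \nat$, by exhibiting $Z$ as an almost sure limit of $\sigma(\bX_{-i})$-measurable random variables built as nearest-means classifiers based on the (deterministic) mixture means $\mu_j$. Concretely, for $p > i$ and $v \in E$ I would set
\[
S^i_p(v) = \sum_{j \leq p,\; j \neq i} (X_j - \mu_j(v))^2, \qquad \hat Z^i_p = \argmin_{v \in E} S^i_p(v),
\]
with ties broken deterministically, so that $\hat Z^i_p$ is $\sigma(\bX_{-i})$-measurable, and it suffices to show $\hat Z^i_p \to Z$ almost surely.

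The core step is to work on $\{Z = z\}$ and, for each $v \neq z$, expand
\[
S^i_p(v) - S^i_p(z) = 2 \sum_{j \leq p,\; j \neq i} (X_j - \mu_j(z))(\mu_j(z) - \mu_j(v)) + a_p,
\]
where $a_p = \sum_{j \leq p,\; j \neq i}(\mu_j(z) - \mu_j(v))^2$. The deterministic part $a_p$ is non-decreasing and diverges by the separation condition \eqref{eq:sep}. Under the conditional law $\P(\cdot \mid Z = z)$, the summands in the stochastic part are independent, mean zero, with per-term variance bounded by $4\sigma_{\max}^2 (\mu_j(z) - \mu_j(v))^2 = 4\sigma_{\max}^2 (a_j - a_{j-1})$ by \eqref{eq:sigmabound}. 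A standard telescoping estimate gives $\sum_p (a_p - a_{p-1})/a_p^2 < \infty$ whenever $a_p \uparrow \infty$, so Kolmogorov's strong law for independent summands (with normalizer $a_p$) yields that the stochastic sum is $o(a_p)$ almost surely. Thus $S^i_p(v) - S^i_p(z) \to +\infty$ a.s.\ on $\{Z = z\}$ for each of the finitely many $v \neq z$, forcing $\hat Z^i_p = z$ eventually on $\{Z = z\}$.

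The main obstacle is ensuring the stochastic fluctuations are dominated by the deterministic separation; this is precisely where parts (2) and (3) of Assumption \ref{ass:mixture} combine to make Kolmogorov's SLLN applicable along the normalization $a_p$. Once $\hat Z^i_p \to Z$ almost surely is established, finiteness of $E$ makes $Z^i := \limsup_p \hat Z^i_p$ a $\sigma(\bX_{-i})$-measurable random variable that equals $Z$ almost surely, so $\sigma(Z) \subseteq \sigma(\bX_{-i})$ (up to $\P$-null sets, as is standard). Applying this for every $i \in \nat$ gives the claimed inclusion $\sigma(Z) \subseteq \bigcap_i \sigma(\bX_{-i})$ and completes the reduction of Assumption \ref{ass:mixture} to Assumption \ref{ass:variable}.
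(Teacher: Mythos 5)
Your proof is correct, but the core convergence argument differs from the paper's. Both proofs recover $Z$ via the same oracle nearest-means classifier built from the true $\mu_j(z)$'s, and both conclude with the same "replace $Z$ by an a.s.\ equal $\sigma(\bX_{-i})$-measurable version" step (cf.\ Remark \ref{rem:null-set}). The paper, however, gets there by applying its quantitative misclassification bound (Lemma \ref{lem:chebyshev}, a Chebyshev argument) to obtain $\P(\hat Z_p \neq Z) \le C/\sep(p) \to 0$, which is only convergence in probability; it then extracts a subsequence $p_r$ with summable error probabilities and invokes Borel--Cantelli to get almost sure convergence along that subsequence. You instead expand $S^i_p(v) - S^i_p(z)$ into a diverging deterministic part $a_p$ plus a conditionally centered sum of independent terms, verify the Kolmogorov criterion $\sum_p \mathrm{Var}(W_p)/a_p^2 \lesssim \sum_p (a_p - a_{p-1})/a_p^2 < \infty$ via the telescoping (Abel--Dini) bound, and conclude by Kolmogorov's SLLN and Kronecker's lemma that the fluctuation is $o(a_p)$ a.s., giving almost sure convergence of the \emph{full} sequence $\hat Z^i_p$ with no subsequence extraction. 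Your route is self-contained and slightly cleaner as a proof of recoverability per se; the paper's route has the advantage of reusing Lemma \ref{lem:chebyshev}, which is needed anyway for the finite-$p$ mislabeling bounds of Proposition \ref{prop:error_rate_bound}, so no separate machinery is introduced. The only cosmetic blemishes in your write-up are the harmless placement of the factor $2$ in the per-term variance bound and the need to start the telescoping sum at the first index where $a_{p-1}>0$; neither affects correctness.
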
 

\begin{rem} \label{rem:null-set} \rm
The proof of Proposition \ref{prop:recover} is in Appendix \ref{app:error_rate_bound}. 
Technically, the proof only gives \emph{almost sure} recovery of $Z$ from $\bX_{-i}$, and we can thus only conclude that $\sigma(Z)$ is contained in $\sigma(\bX_{-i})$ up to negligible sets. We can, however, replace $Z$ by a variable, $Z'$, such that $\sigma(Z') \subseteq \sigma(\bX_{-i})$
and $Z' = Z$ almost surely. We can thus simply swap $Z$ with $Z'$ in Assumption 
\ref{ass:mixture}.
\end{rem}

\begin{rem} \rm 
The arguments leading to Proposition \ref{prop:recover}
rely on Assumptions \ref{ass:mixture}(2) and \ref{ass:mixture}(3)---specifically 
the separation condition \eqref{eq:sep} and the upper bound in \eqref{eq:sigmabound}. However, these conditions are 
not necessary to be able to recover $Z$ from $\bX_{-i}$. Using Kakutani's 
theorem on equivalence of product measures
it is possible to characterize precisely when $Z$ can be recovered, but 
the abstract characterization is not particularly operational. In Appendix \ref{app:kakutani}
we analyze the characterization for the Gaussian mixture model, where $X_i$ given $Z = z$
has a $\mathcal{N}(\mu_i(z), \sigma^2_i(z))$-distribution. This leads to 
Proposition \ref{prop:kakutani} and Corollary \ref{cor:kakutani}
in Appendix \ref{app:kakutani}, which gives 
necessary and sufficient conditions for recovery in the Gaussian mixture model. 
\end{rem}

\subsection{Bounding estimation error due to using substitutes} \label{sec:measurement_error}
In this section we derive an upper bound on the estimation error, which is due to using substitutes, cf. the decomposition \eqref{eq:beta_hat_tilde_decomposition}. To this end, we consider the 
(partly hypothetical) observations 
$(x_{i,1}, \hat{z}_1, z_1, y_1), \ldots (x_{i,n}, \hat{z}_n, z_n, y_n)$, which include the 
otherwise unobserved $z_k$-s as well as their observed substitutes, the $\hat{z}_k$-s. We let $\bx_i = (x_{i,1}, \ldots, x_{i,n})^T \in \real^n$ and $\by = (y_1, \ldots, y_n)^T \in \real^n$, and 
$\|\bx_i\|_2$ and $\|\by\|_2$ denote the $2$-norms of $\bx_i$ and $\by$, respectively. We also 
let 
\[
n(z) = \sum_{k=1}^n 1(z_k = z) \qquad \text{and} \qquad \hat{n}(z) = \sum_{k=1}^n 1(\hat{z}_k = z) 
\]
for $z \in E = \{1, \ldots, K\}$, and  
\[ 
n_{\min} = \min \{n(1), \ldots, n(K), \hat{n}(1), \ldots, \hat{n}(K)\}.
\]
Furthermore, 
\[ 
\overline{\mu}_i(z) = \frac{1}{n(z)} \sum_{k: z_k = z} x_{i,k},
\]
and we define the following three quantities 
\begin{align}
    \alpha & = \frac{n_{\min}}{n} \label{eq:alpha} \\ 
    \delta & = \frac{1}{n} \sum_{k=1}^n 1(\hat{z}_k \neq z_k) \label{eq:delta} \\
    \rho & = \frac{\min\left\{\sum_{k=1}^{n} (x_{i,k}- \overline{\mu}_i(z_k))^2, 
    \sum_{k=1}^{n} (x_{i,k}- \hat{\mu}_i(\hat{z}_k))^2 \right\}}{\| \bx_i\|_2^2}. \label{eq:rho}
\end{align}

\begin{theorem} \label{thm:error_bound}
Let $\alpha$, $\delta$ and $\rho$ be given by \eqref{eq:alpha},
\eqref{eq:delta} and \eqref{eq:rho}. If $\alpha, \rho > 0$  then 
    \begin{equation} \label{eq:bias_bound}
        |\widehat{\beta}^{\s}_i - \hat{\beta}_i| \leq \frac{2\sqrt{2}}{\rho^2} 
        \sqrt{\frac{\delta}{\alpha}} \frac{\|\by\|_2}{\|\bx_i\|_2}.
    \end{equation}
\end{theorem}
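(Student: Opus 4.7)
The plan is to express both estimators as partial-regression OLS coefficients, algebraically reduce their difference to a ratio whose denominator supplies the factor $\rho^2$, and then bound the remaining numerator in terms of how much the projection onto the group-indicator span changes when a fraction $\delta$ of the labels is perturbed. Let $W\in\real^{n\times K}$ be the binary matrix whose columns are the group-indicators $(\mathbf{1}(z_k=z))_k$ for $z\in E$, and let $\hat W$ be the analogous matrix built from $(\hat z_k)_k$; write $P_W,P_{\hat W}$ for the orthogonal projections onto their column spaces and $M_W=I-P_W$, $M_{\hat W}=I-P_{\hat W}$. Because $\hat g(\cdot)$ and $\hat\mu_i(\cdot)$ are within-$\hat z$-group means in Algorithm \ref{alg:leanalgmix}, the centred residuals $(x_{i,k}-\hat\mu_i(\hat z_k))_k$ are orthogonal to every vector constant on the predicted groups, and a direct computation yields $\widehat{\beta}^{\s}_i=N_1/D_1$ with $N_1=\bx_i^T M_{\hat W}\by$ and $D_1=\bx_i^T M_{\hat W}\bx_i$, and analogously $\widehat\beta_i=N_0/D_0$ with $M_W$ in place of $M_{\hat W}$.

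Writing $\Delta=M_{\hat W}-M_W=P_W-P_{\hat W}$, the elementary identity $N_1/D_1-N_0/D_0=(N_1D_0-N_0D_1)/(D_0D_1)$, together with $N_1-N_0=\bx_i^T\Delta\by$ and $D_1-D_0=\bx_i^T\Delta\bx_i$, gives
\[
\widehat{\beta}^{\s}_i-\widehat\beta_i\;=\;\frac{(\bx_i^T\Delta\by)\,D_0-N_0\,(\bx_i^T\Delta\bx_i)}{D_0D_1}.
\]
The denominator satisfies $D_0D_1\ge\rho^2\|\bx_i\|_2^4$ directly from the definition of $\rho$, which is precisely the source of the $\rho^2$ in the bound. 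For the numerator, symmetry of $\Delta$ and Cauchy-Schwarz give $|\bx_i^T\Delta\by|\le\|\Delta\bx_i\|_2\|\by\|_2$ and $|\bx_i^T\Delta\bx_i|\le\|\Delta\bx_i\|_2\|\bx_i\|_2$, while the trivial bounds $D_0\le\|\bx_i\|_2^2$ and $|N_0|=|(M_W\bx_i)^T\by|\le\|\bx_i\|_2\|\by\|_2$ combine to yield $|N_1D_0-N_0D_1|\le 2\|\Delta\bx_i\|_2\|\by\|_2\|\bx_i\|_2^2$, hence $|\widehat{\beta}^{\s}_i-\widehat\beta_i|\le 2\|\Delta\bx_i\|_2\|\by\|_2/(\rho^2\|\bx_i\|_2^2)$.

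The remaining, and principal, task is the operator-perturbation bound $\|\Delta\bx_i\|_2^2\le (2\delta/\alpha)\|\bx_i\|_2^2$, which produces the factor $2\sqrt 2\sqrt{\delta/\alpha}$ after taking the square root. I would prove it via the Pythagorean identity
\[
\|\Delta\bx_i\|_2^2\;=\;\|P_WM_{\hat W}\bx_i\|_2^2+\|M_WP_{\hat W}\bx_i\|_2^2,
\]
which follows from $P_W-P_{\hat W}=P_WM_{\hat W}-M_WP_{\hat W}$ and the orthogonality of $\mathrm{range}(P_W)$ and $\mathrm{range}(M_W)$. For the first summand, the key observation is that $\sum_{k:\hat z_k=z'}(M_{\hat W}\bx_i)_k=0$ for every $z'$, so $\sum_{k:z_k=z}(M_{\hat W}\bx_i)_k$ collapses to a signed sum of $(M_{\hat W}\bx_i)_k$ over the mislabelled set $J=\{k:z_k\neq\hat z_k\}$ alone; a Cauchy-Schwarz estimate using $|J|=n\delta$ and the uniform bound $n(z)\ge\alpha n$ then gives a contribution of order $\delta/\alpha$. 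For the second summand, I would bound the within-true-group variance of $P_{\hat W}\bx_i$ by $\sum_{k\in J}(\hat\mu_i(\hat z_k)-\hat\mu_i(z_k))^2$, represent $\hat\mu_i(\hat z_k)-\hat\mu_i(z_k)$ as a linear functional $\sum_l q_k(l)x_{i,l}$ whose coefficient vector has squared $\ell^2$-norm $1/\hat n(\hat z_k)+1/\hat n(z_k)\le 2/(\alpha n)$, and sum Cauchy-Schwarz over $J$ to again obtain a $\delta/\alpha$-sized bound. Substituting the resulting estimate on $\|\Delta\bx_i\|_2$ into the previous display completes the proof.

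The hardest step is the perturbation bound: the Pythagorean reduction is clean, but each summand requires a delicate combinatorial argument that ties the per-group Cauchy-Schwarz estimates to the global mislabelling count $n\delta$ and the uniform lower bound $\alpha n$ on group sizes, and it is easy to lose powers of $2$ in this accounting if one is not careful with the Cauchy-Schwarz and with which norm one applies to $\bx_i$.
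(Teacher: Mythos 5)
Your reduction is correct and is essentially the same first half as the paper's proof: both arguments start from the projection representations \eqref{eq:betahat_subz}--\eqref{eq:betahat_z} of the two OLS estimators, both extract the factor $\rho^{-2}$ from the product of the two residual sums of squares, and your chain of Cauchy--Schwarz estimates giving
\[
|\widehat{\beta}^{\s}_i-\hat{\beta}_i|\;\le\;\frac{2\,\|(P_{\bZ}-P_{\hat\bZ})\bx_i\|_2\,\|\by\|_2}{\rho^2\,\|\bx_i\|_2^2}
\]
checks out. The stated bound then follows exactly when $\|(P_{\bZ}-P_{\hat\bZ})\bx_i\|_2\le\sqrt{2\delta/\alpha}\,\|\bx_i\|_2$. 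The paper obtains this as an operator-norm statement (Lemma \ref{lem:proj_norm}) by invoking Stewart's perturbation bound for Moore--Penrose inverses, $\|P_{\bZ}-P_{\hat\bZ}\|_2\le\min\{\|\bZ^{+}\|_2,\|\hat\bZ^{+}\|_2\}\,\|\bZ-\hat\bZ\|_2$, combined with $\|\bZ^{+}\|_2\le(\alpha n)^{-1/2}$ and $\|\bZ-\hat\bZ\|_F^2=2\delta n$.

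The gap is in your hands-on replacement for that lemma. The Pythagorean identity $\|(P_{\bZ}-P_{\hat\bZ})\bx_i\|_2^2=\|P_{\bZ}(I-P_{\hat\bZ})\bx_i\|_2^2+\|(I-P_{\bZ})P_{\hat\bZ}\bx_i\|_2^2$ is fine, but each of the two per-summand estimates you sketch delivers $(2\delta/\alpha)\|\bx_i\|_2^2$, not the $(\delta/\alpha)\|\bx_i\|_2^2$ you would need. For the first summand, each mislabelled index contributes to \emph{two} group sums (its true group with sign $+$ and its predicted group with sign $-$), so the Cauchy--Schwarz accounting yields $\tfrac{\delta}{\alpha}\sum_{k\in J}2v_k^2\le\tfrac{2\delta}{\alpha}\|\bx_i\|_2^2$ with $v=(I-P_{\hat\bZ})\bx_i$; for the second, $|J|\cdot\tfrac{2}{\alpha n}\|\bx_i\|_2^2=\tfrac{2\delta}{\alpha}\|\bx_i\|_2^2$. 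Summing gives $\|(P_{\bZ}-P_{\hat\bZ})\bx_i\|_2^2\le(4\delta/\alpha)\|\bx_i\|_2^2$ and hence a final constant of $4$ rather than $2\sqrt{2}$: your argument proves the theorem with a worse constant, not the inequality as stated. You explicitly flag the risk of losing powers of $2$ but do not resolve it; the clean fix is to prove the operator-norm bound $\|P_{\bZ}-P_{\hat\bZ}\|_2\le\sqrt{2\delta/\alpha}$ directly, as the paper does.
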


The proof of Theorem \ref{thm:error_bound} is given in 
Appendix \ref{app:error_proof}. Appealing to the Law of Large Numbers, 
the quantities in the upper bound \eqref{eq:bias_bound} 
can be interpreted as follows:
\begin{itemize}
\item The ratio $\|\by\|_2 / \|\bx_i\|_2$ is 
approximately a fixed and finite constant (unless $X_i$ is constantly zero)
depending on the marginal distributions of $X_i$ and $Y$ only.
\item The fraction $\alpha$ is approximately 
\begin{equation} \label{eq:alph_low}
\min_{z \in E} \left\{ \min\{ \P(Z = z), \P(\hat{Z} = z)\} \right\},
\end{equation}
which is strictly positive by Assumption \ref{ass:mixture}(4) (unless 
recovery is working poorly).
    \item The quantity $\rho$ is a standardized 
measure of the residual variation of the $x_{i,k}$-s within the groups defined by 
the $z_k$-s or the $\hat{z}_k$-s. It is approximately equal to the constant  
\[
\frac{\min \left\{\ex\left[\Var \left[X_i \mid Z \right] \right], 
\ex\left[\Var \left[X_i \mid \hat{Z} \right] \right] \right\}}{E(X_i^2)},
\]
which is strictly positive if the probabilities in \eqref{eq:alph_low} are strictly positive
and not all of the conditional variances are $0$.
\item The fraction $\delta$ is the relative mislabeling frequency of the substitutes. It is approximately equal to the mislabeling rate $\P(\hat{Z} \neq Z)$.
\end{itemize}
The bound \eqref{eq:bias_bound} tells us that if the 
mislabeling rate of the substitutes tends to $0$, that is, if $\P(\hat{Z} \neq Z) \to 0$, 
the estimation error tends to 
$0$ roughly like $\sqrt{\P(\hat{Z} \neq Z)}$. This could potentially be achieved 
by letting $p \to \infty$ and $m \to \infty$. We formalize this statement in 
Section \ref{sec:asymp}.

\subsection{Bounding the mislabeling rate of the substitutes} \label{sec:error_rate_bound}
In this section we give bounds on the mislabeling rate, $\P(\hat{Z} \neq Z)$, 
with the ultimate purpose 
of controlling the magnitude of $\delta$ in the bound \eqref{eq:bias_bound}. Two
different approximations are the culprits of mislabeling. 
First, the computation of $\hat{Z}$ is based on the $p$ variables in $\bX_{1:p}$ only, and 
it is thus an approximation of the full recovery map based on all variables in $\bX$. Second, 
the recovery map is an estimate and thus itself an approximation. 
The severity of the second approximation is quantified by the following relative 
errors of the conditional means used for recovery. 

\begin{dfn}[Relative errors, $p$-separation] 
For the mixture model given by Assumption~\ref{ass:mixture} let 
$\boldsymbol{\mu}_{1:p}(z) = (\mu_i(z))_{i=1, \ldots, p} \in \real^p$ for $z \in E$. With
$\check{\boldsymbol{\mu}}_{1:p}(z) \in \real^p$ for $z \in E$ any collection of $p$-vectors, 
define the relative errors 
\begin{equation} \label{eq:relerr}
R_{z,v}^{(p)} = \frac{\|\boldsymbol{\mu}_{1:p}(z) - \check{\boldsymbol{\mu}}_{1:p}(z)\|_2}
{\|\boldsymbol{\mu}_{1:p}(z) -  \boldsymbol{\mu}_{1:p}(v)\|_2}
\end{equation}
for $z, v \in E$,  $v \neq z$. Define, moreover, the minimal $p$-separation as 
\begin{equation} \label{eq:minsep}
\sep(p) = \min_{z \neq v} \norm{\boldsymbol{\mu}_{1:p}(z) - \boldsymbol{\mu}_{1:p}(v)}_2^2.
\end{equation}
\end{dfn}

Note that Assumption \ref{ass:mixture}(2) implies that $\sep(p) \to \infty$ for 
$p \to \infty$. This convergence could be arbitrarily slow. The following 
definition captures the important case where the separation grows at least 
linearly in $p$.

\begin{dfn}[Strong separation]
We say that the mixture model satisfies \emph{strong separation} if there exists an 
$\epsilon > 0$ such that $\sep(p) \geq \epsilon p$ eventually.
\end{dfn}

Strong separation is equivalent to 
\[
\liminf_{p \to \infty} \frac{\sep(p)}{p} > 0.
\]
A sufficient condition for strong separation is that 
$|\mu_i(z) - \mu_i(v)| \geq \epsilon$ eventually for all $z,v \in E$, $v \neq z$ 
and some $\epsilon > 0$.
That is, $\liminf_{i \to \infty} |\mu_i(z) - \mu_i(v)| > 0$ for $v \neq z$.
When we have strong separation, then for $p$ large enough
\[
\left(R_{z,v}^{(p)}\right)^2 \leq \frac{1}{\epsilon p} \|\boldsymbol{\mu}_{1:p}(z) - \check{\boldsymbol{\mu}}_{1:p}(z)\|_2^2
\leq \frac{1}{\epsilon} \max_{i=1, \ldots, p} \left(\mu_{i}(z) - 
\check{\mu}_i(z) \right)^2, 
\] 
and we note that it is conceivable\footnote{Parametric assumptions, say, and 
marginal estimators of each $\mu_i(z)$  
that, under Assumption \ref{ass:mixture}, are uniformly consistent over $i \in \nat$ can be 
combined with a simple union bound to show the claim, possibly in a suboptimal way, cf. Section \ref{sec:tensor_decomp}.} that we can estimate $\boldsymbol{\mu}_{1:p}(z)$ 
by an estimator, $\check{\boldsymbol{\mu}}_{1:p}(z)$, such that for $m, p \to \infty$ 
appropriately, $R_{z,v}^{(p)} \overset{P}{\to} 0$.

The following proposition shows that a bound on $R_{z,v}^{(p)}$ is sufficient to 
ensure that the growth of $\sep(p)$ controls how fast the mislabeling rate diminishes with $p$. The proposition is stated for a fixed $\check{\boldsymbol{\mu}}$, which means that when $\check{\boldsymbol{\mu}}$ is an estimate, 
we are effectively assuming it is independent of the template observation $(\bX_{1:p}, Z)$ 
used to compute $\hat{Z}$. 

\begin{proposition} \label{prop:error_rate_bound} 
Suppose that Assumption \ref{ass:mixture} holds. Let $\check{\boldsymbol{\mu}}_{1:p}(z) \in \real^p$
for $z \in E$ and let 
\[
\hat{Z} = \argmin_{z} \| \bX_{1:p} - \check{\boldsymbol{\mu}}_{1:p}(z)\|_2.
\]
Suppose also that $R_{z,v}^{(p)} \leq \frac{1}{10}$ for all $z, v \in E$ with $v \neq z$. Then 
\begin{equation} \label{eq:gen_bound}
    \P\left(\hat{Z} \neq Z\right) \leq \frac{25 K \sigma^2_{\max}}{\sep(p)}.
\end{equation}
If, in addition, the conditional distribution of $X_i$ given $Z = z$ is sub-Gaussian 
with variance factor $v_{\max}$, independent of $i$ and $z$, then
\begin{equation}  \label{eq:sub_gauss_bound}
    \P\left(\hat{Z} \neq Z\right) \leq K \exp\left( - \frac{\sep(p)}{50 v_{\max}} \right)
\end{equation}
\end{proposition}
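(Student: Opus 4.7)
The plan is to condition on $Z = z$, bound $\P(\hat{Z} \neq z \mid Z = z)$ uniformly in $z$, and integrate. Write $\bX_{1:p} = \boldsymbol{\mu}_{1:p}(z) + \boldsymbol{\epsilon}$, where, by Assumption~\ref{ass:mixture}, the vector $\boldsymbol{\epsilon}$ has, conditional on $Z = z$, independent centered coordinates with variances bounded by $\sigma^2_{\max}$. Since $\hat{Z}$ minimizes $\|\bX_{1:p} - \check{\boldsymbol{\mu}}_{1:p}(\cdot)\|_2$, a union bound reduces matters to controlling, for each $v \neq z$, the event
\begin{equation*}
    A_{z,v} = \bigl\{\|\bX_{1:p} - \check{\boldsymbol{\mu}}_{1:p}(v)\|_2 \leq \|\bX_{1:p} - \check{\boldsymbol{\mu}}_{1:p}(z)\|_2\bigr\}.
\end{equation*}

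The key algebraic step is to set $\boldsymbol{u} = \check{\boldsymbol{\mu}}_{1:p}(v) - \check{\boldsymbol{\mu}}_{1:p}(z)$ and expand both squared norms on $A_{z,v}$. A short calculation recasts $A_{z,v}$ as the half-space event
\begin{equation*}
    \langle \boldsymbol{u}, \boldsymbol{\epsilon}\rangle \;\geq\; \tfrac{1}{2}\|\boldsymbol{u}\|_2^2 - \bigl\langle \boldsymbol{u},\, \boldsymbol{\mu}_{1:p}(z) - \check{\boldsymbol{\mu}}_{1:p}(z)\bigr\rangle.
\end{equation*}
Writing $d_{zv} = \|\boldsymbol{\mu}_{1:p}(z) - \boldsymbol{\mu}_{1:p}(v)\|_2$, the triangle inequality together with the hypothesis $R_{z,v}^{(p)} \leq 1/10$ yields the two-sided bound $(4/5)\,d_{zv} \leq \|\boldsymbol{u}\|_2 \leq (6/5)\,d_{zv}$, and Cauchy-Schwarz bounds the cross term in absolute value by $(6/50)\,d_{zv}^2$. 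Combined, the right-hand side above is at least $(6/25)\,d_{zv}^2$. This is where the careful bookkeeping of constants is needed to hit the $25$ in \eqref{eq:gen_bound}, and it is the main technical obstacle; every step is elementary, but a looser deterministic inequality would degrade the final constant.

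Conditional on $Z = z$ the vector $\boldsymbol{u}$ is deterministic, so $\langle \boldsymbol{u}, \boldsymbol{\epsilon}\rangle$ has mean zero and variance at most $\sigma^2_{\max}\|\boldsymbol{u}\|_2^2 \leq (36/25)\,\sigma^2_{\max}\,d_{zv}^2$. Chebyshev's inequality then gives $\P(A_{z,v} \mid Z = z) \leq 25\,\sigma^2_{\max}/d_{zv}^2 \leq 25\,\sigma^2_{\max}/\sep(p)$, since $d_{zv}^2 \geq \sep(p)$. Summing over the at most $K-1$ competitors $v$ and averaging over $z$ yields \eqref{eq:gen_bound}. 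For \eqref{eq:sub_gauss_bound}, the sub-Gaussian assumption implies that $\langle \boldsymbol{u}, \boldsymbol{\epsilon}\rangle$ is sub-Gaussian with variance factor at most $v_{\max}\|\boldsymbol{u}\|_2^2 \leq (36/25)\,v_{\max}\,d_{zv}^2$, so a one-sided Chernoff bound replaces Chebyshev and produces the claimed exponential decay with constant $50$ in the denominator of the exponent; the union bound over $v$ and over $z$ is identical.
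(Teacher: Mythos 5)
Your overall strategy is exactly the paper's: condition on $Z=z$, rewrite the competing-distance event $A_{z,v}$ as a half-space event for the centered linear statistic $\langle \boldsymbol{u}, \bX_{1:p}-\boldsymbol{\mu}_{1:p}(z)\rangle$, lower-bound the threshold using the relative-error hypothesis, apply Chebyshev (resp.\ Chernoff) to the conditionally independent coordinates, and finish with a union bound over the $K-1$ competitors and over $z$. The paper packages this as Lemmas~\ref{lem:chebyshev} and \ref{lem:subgauss}, with the same coordinatewise decomposition, the same bracket $\tfrac45 \le B_{z,v} \le \tfrac65$ for $B_{z,v}=\|\boldsymbol{u}\|_2/d_{zv}$, and the same $\tfrac{6}{25}d_{zv}^2$ threshold.

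There is, however, one step that as written does not deliver the claimed constants. You lower-bound $\tfrac12\|\boldsymbol{u}\|_2^2$ using the worst case $\|\boldsymbol{u}\|_2=\tfrac45 d_{zv}$ (giving $\tfrac{8}{25}d_{zv}^2$) and separately upper-bound the cross term using the opposite worst case $\|\boldsymbol{u}\|_2=\tfrac65 d_{zv}$ (giving $\tfrac{6}{50}d_{zv}^2=\tfrac{3}{25}d_{zv}^2$). Combining these two decoupled worst cases yields only $\tfrac{8}{25}-\tfrac{3}{25}=\tfrac{5}{25}$, not the $\tfrac{6}{25}$ you assert; the two extremes cannot occur simultaneously. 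With a $\tfrac15 d_{zv}^2$ threshold, Chebyshev gives $36\sigma_{\max}^2/d_{zv}^2$ and the Chernoff exponent degrades to $d_{zv}^2/(72\,v_{\max})$, so the constants $25$ and $50$ in \eqref{eq:gen_bound} and \eqref{eq:sub_gauss_bound} are missed. The fix is to keep the two terms coupled in the single parameter $B=\|\boldsymbol{u}\|_2/d_{zv}$: the threshold is at least $d_{zv}^2\bigl(\tfrac12 B^2-\tfrac1{10}B\bigr)$, and since $b\mapsto \tfrac12 b^2-\tfrac1{10}b$ is increasing for $b\ge\tfrac1{10}$, its minimum over $B\in[\tfrac45,\tfrac65]$ is attained at $B=\tfrac45$ and equals $\tfrac{6}{25}$. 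This is precisely what the paper does; with that repair your Chebyshev and Chernoff computations go through verbatim and reproduce both stated bounds. (A minor point you handle correctly: the proposition is for a fixed, deterministic $\check{\boldsymbol{\mu}}$, so conditioning on $Z=z$ indeed leaves $\boldsymbol{u}$ deterministic.)
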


\begin{rem} \label{rem:constant} \rm
The proof of Proposition \ref{prop:error_rate_bound} is in Appendix \ref{app:error_rate_bound}. It shows that the specific constants, $25$ and $50$, 
appearing in the bounds above 
hinge on the 
specific bound, $R_{z,v}^{(p)} \leq \frac{1}{10}$, on the relative error. The proof works for 
any bound strictly smaller than $\frac{1}{4}$. Replacing $\frac{1}{10}$
by a smaller bound on the relative errors decreases the constant, but it will 
always be larger than $4$.    
\end{rem}

The upshot of Proposition \ref{prop:error_rate_bound} is that if the relative errors, 
$R_{z,v}^{(p)}$, are sufficiently small then Assumption \ref{ass:mixture} is sufficient 
to ensure that $\P\left(\hat{Z} \neq Z\right) \to 0$ for $p \to \infty$. Without 
additional distributional assumptions the general bound \eqref{eq:gen_bound} decays slowly 
with $p$, and even with strong separation, 
the bound only gives a rate of $\tfrac{1}{p}$. With the additional sub-Gaussian 
assumption, the rate is improved dramatically, and with strong separation it improves to 
$e^{-cp}$ for some constant $c > 0$. If the $X_i$-s are bounded, their (conditional) 
distributions are sub-Gaussian, thus the rate is fast in this special but important 
case. 

\subsection{Asymptotics of the substitute adjustment estimator} \label{sec:asymp}
Suppose $Z$ takes values in $E = \{1, \ldots, K\}$ and that 
$(x_{i, 1}, z_{1}, y_1), \ldots, (x_{i, n}, z_{n}, y_n)$ are
observations of $(X_i, Z, Y)$. Then 
Assumption \ref{ass:moment} ensures that the oracle OLS estimator
$\widehat{\beta}_i$ is $\sqrt{n}$-consistent and that
\[
\widehat{\beta}_i \overset{\text{as}}{\sim} \mathcal{N}(\beta_i, w_i^2/n).
\]
There are standard sandwich formulas for
the asymptotic variance parameter $w_i^2$. In this section we 
combine the bounds from Sections \ref{sec:measurement_error} and 
\ref{sec:error_rate_bound} to show our main theoretical result; that $\widehat{\beta}^{\s}_i$
is a consistent and asymptotically normal estimator of $\beta_i$ for $n,m \to \infty$
if also $p \to \infty$ appropriately. 

\begin{assump} \label{ass:independence} \rm
The dataset $\S_0$ in Algorithm \ref{alg:leanalgmix} consists 
of i.i.d. observations of $\bX_{1:p}$, the dataset $\S$ in Algorithm \ref{alg:leanalgmix} consists of i.i.d. observations of $(\bX_{1:p}, Y)$, and $\S$ is independent of $\S_0$.
\end{assump}

\begin{theorem} \label{thm:main}
Suppose Assumption \ref{ass:regpos} holds and $E(Y^2) < \infty$, 
and consider the mixture model fulfilling Assumption \ref{ass:mixture}. Consider
data satisfying Assumption \ref{ass:independence} and the estimator 
$\widehat{\beta}^{\s}_i$ given by Algorithm \ref{alg:leanalgmix}. Suppose that  
$n, m, p \to \infty$ such that $\P(R_{z,v}^{(p)} > \tfrac{1}{10}) \to 0$. Then 
the following hold:
\begin{enumerate}
    \item The estimation error due to using substitutes tends to $0$ in 
    probability, that is, 
\[ 
    |\widehat{\beta}^{\s}_i - \hat{\beta}_i| \overset{P}{\to} 0, 
\]
and $\widehat{\beta}^{\s}_i$ is a consistent estimator of $\beta_i$. 
\item If $\frac{\sep(p)}{n} \to \infty$ and 
$n \P(R_{z,v}^{(p)} > \tfrac{1}{10}) \to 0$, then $\sqrt{n} |\widehat{\beta}^{\s}_i - \hat{\beta}_i| \overset{P}{\to} 0$.
\item If $X_i$ conditionally on $Z = z$ is sub-Gaussian, 
with variance factor independent of $i$ and $z$, and if $\frac{\sep(p)}{\log(n)} \to \infty$
and $n \P(R_{z,v}^{(p)} > \tfrac{1}{10}) \to 0$, 
then $\sqrt{n} |\widehat{\beta}^{\s}_i - \hat{\beta}_i| \overset{P}{\to} 0$.
\end{enumerate}
In addition, in case (2) as well as case (3), $\widehat{\beta}^{\s}_i \overset{\text{as}}{\sim} \mathcal{N}(\beta_i, w_i^2/n)$, where the 
asymptotic variance parameter $w_i^2$ is the same as for the oracle estimator $\widehat{\beta}_i$.
\end{theorem}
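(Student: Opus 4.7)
The strategy is to apply the decomposition $\widehat{\beta}^{\s}_i - \beta_i = (\widehat{\beta}^{\s}_i - \widehat{\beta}_i) + (\widehat{\beta}_i - \beta_i)$ from \eqref{eq:beta_hat_tilde_decomposition}, control the substitute error via Theorem \ref{thm:error_bound}, and invoke standard OLS asymptotics for the oracle $\widehat{\beta}_i$. Conditional on $\S_0$ (hence on $\hat{f}^p$ and $\check{\boldsymbol{\mu}}_{1:p}$), Assumption \ref{ass:independence} makes $\S$ i.i.d.\ with $(X_i,\hat{Z},Z,Y)$ drawn from a law depending on $\hat{f}^p$. On the event $\mathcal{A}_p = \{\max_{z\neq v} R_{z,v}^{(p)} \leq 1/10\}$, which has probability tending to $1$ by hypothesis, Proposition \ref{prop:error_rate_bound} bounds the conditional mislabeling rate $q_p := \P(\hat{Z}\neq Z \mid \hat{f}^p)$ by $25 K \sigma_{\max}^2/\sep(p)$ in general and by $K\exp(-\sep(p)/(50 v_{\max}))$ under sub-Gaussianity.

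I would first show that the denominators in \eqref{eq:bias_bound} are stable. By the Law of Large Numbers, $\|\by\|_2^2/n \to \E[Y^2]$ and $\|\bx_i\|_2^2/n \to \E[X_i^2]>0$, so their ratio is $O_P(1)$. For $\alpha$, a conditional LLN argument yields $\hat{n}(z)/n \to \P(\hat{Z}=z\mid \hat{f}^p)$ in probability, and on $\mathcal{A}_p$ this differs from $\P(Z=z)>0$ by at most $q_p \to 0$; likewise $n(z)/n \to \P(Z=z)$. Thus $\alpha$ is bounded below by a fixed positive constant with probability tending to $1$. A similar argument, based on the LLN for the within-group sums of squares which converge to $\E[\Var[X_i\mid Z]]>0$ and $\E[\Var[X_i\mid \hat{Z}]]$ (which differs by $O(q_p)$ on $\mathcal{A}_p$), shows that $\rho$ is bounded below in probability.

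For $\delta$, Fubini and Proposition \ref{prop:error_rate_bound} give $\E[\delta \mathbf{1}_{\mathcal{A}_p}] = \E[q_p \mathbf{1}_{\mathcal{A}_p}] \leq 25 K \sigma_{\max}^2/\sep(p)$ in general. Since $\sep(p)\to\infty$ by Assumption \ref{ass:mixture}(2) and $\P(\mathcal{A}_p^c)\to 0$, Markov's inequality yields $\delta \overset{P}{\to} 0$. Inserting into \eqref{eq:bias_bound} proves part (1); the oracle $\widehat{\beta}_i$ is consistent for $\beta_i$ by standard OLS arguments using the LLN for the numerator and denominator (noting that Assumption \ref{ass:mixture} combined with $\E[Y^2]<\infty$ implies Assumption \ref{ass:moment}). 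For parts (2) and (3), the bound \eqref{eq:bias_bound} reduces the task to showing $n\delta \overset{P}{\to} 0$; another application of Markov gives $\P(n\delta > \epsilon) \leq n \E[\delta \mathbf{1}_{\mathcal{A}_p}]/\epsilon + \P(\mathcal{A}_p^c)$, and the rate hypothesis $\sep(p)/n\to\infty$ (respectively $\sep(p)/\log n \to \infty$ combined with the sub-Gaussian bound) together with $n\P(\mathcal{A}_p^c)\to 0$ kills both terms.

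The asymptotic normality assertion then follows from the decomposition and Slutsky's theorem: under Assumption \ref{ass:moment} the oracle satisfies $\sqrt{n}(\widehat{\beta}_i - \beta_i) \Rightarrow \mathcal{N}(0, w_i^2)$ by the standard OLS CLT for i.i.d.\ data in $\S$, while $\sqrt{n}(\widehat{\beta}^{\s}_i - \widehat{\beta}_i) = o_P(1)$ by parts (2)--(3). The main obstacle I expect is making the conditional LLN arguments for $\alpha$, $\rho$, and $\delta$ watertight as $\hat{f}^p$ varies with $p$ and $m$; the cleanest route, taken above, is to work on the high-probability event $\mathcal{A}_p$ and bound unconditional expectations via Fubini plus Proposition \ref{prop:error_rate_bound}, thereby avoiding any uniform LLN over the growing family of candidate recovery maps.
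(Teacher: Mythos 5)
Your proposal is correct and follows essentially the same route as the paper: the decomposition \eqref{eq:beta_hat_tilde_decomposition}, the bound of Theorem \ref{thm:error_bound} with Law-of-Large-Numbers control of $\alpha$, $\rho$ and $\|\by\|_2/\|\bx_i\|_2$, the mislabeling bounds of Proposition \ref{prop:error_rate_bound} combined with Markov's inequality to get $\delta \overset{P}{\to} 0$ (resp.\ $n\delta \overset{P}{\to} 0$), and Slutsky for the normality claim. The only cosmetic differences are that you condition on $\S_0$ and work on the event $\{\max_{z\neq v}R^{(p)}_{z,v}\le 1/10\}$ where the paper bounds $\E[\delta]$ unconditionally, and that you control $\rho$ via the population quantity $\E[\Var[X_i\mid\hat Z]]$ where the paper uses Lemma \ref{lem:proj_norm} directly; both variants are sound.
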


\begin{rem} \rm 
The proof of Theorem \ref{thm:main} is in Appendix \ref{app:main_proof}. 
As mentioned in Remark \ref{rem:constant}, the precise value of the constant $\tfrac{1}{10}$ is not important. It could be replaced by any other constant \emph{strictly smaller} than 
$\tfrac{1}{4}$, and the conclusion would be the same.
\end{rem}

\begin{rem} \rm 
The general growth condition on 
$p$ in terms of $n$ in case (2) is bad; even with strong separation we 
would need $\tfrac{p}{n} \to \infty$, that is, $p$ should 
grow faster than $n$. In the sub-Gaussian case 
this improves substantially so that $p$ only needs to grow faster than $\log(n)$.
\end{rem}

\subsection{Tensor decompositions} \label{sec:tensor_decomp}
One open question from both a theoretical 
and practical perspective is how we construct the estimators $\check{\boldsymbol{\mu}}_{1:p}(z)$.
We want to ensure consistency for $m, p \to \infty$, which is expressed as 
$\P\left(R_{z,v}^{(p)} > \tfrac{1}{10}\right) \to 0$ in our theoretical results, 
and that the estimator can be computed efficiently for large $m$ and $p$. We indicated 
in Section \ref{sec:error_rate_bound} that simple marginal estimators of 
$\mu_{i}(z)$ can achieve this, but such estimators may be highly inefficient. 
In this section we briefly describe two methods based on tensor decompositions
\citep{anandkumar2014tensor} related to the third order moments of $\bX_{1:p}$. 
Thus to apply such methods we need to 
additionally assume that the $X_i$-s have finite third moments. 

Introduce first the third order $p \times p \times p$ tensor $G^{(p)}$ as
\[
G^{(p)} = \sum_{i=1}^p \mathbf{a}_i \otimes \mathbf{e}_i \otimes \mathbf{e}_i + 
\mathbf{e}_i \otimes \mathbf{a}_i \otimes \mathbf{e}_i + \mathbf{e}_i \otimes \mathbf{e}_i \otimes \mathbf{a}_i, 
\]
where $\mathbf{e}_i \in \real^p$ is the standard basis vector with a $1$ in 
the $i$-th coordinate and $0$ elsewhere, and where 
\[
\mathbf{a}_i = \sum_{z \in E} \P(Z = z) \sigma_i^2(z)\boldsymbol{\mu}_{1:p}(z). 
\]
In terms of the third order raw moment tensor and $G^{(p)}$ we define the 
tensor 
\begin{equation}
    M_3^{(p)} = \ex[\bX_{1:p} \otimes \bX_{1:p} \otimes \bX_{1:p}] - G^{(p)}.
\end{equation}
Letting $\I = \{(i_1, i_2, i_3) \in \{1, \ldots, p\} \mid i_1, i_2, i_3 \text{ all distinct}\}$
denote the set of indices of the tensors with all entries distinct, we see from 
the definition of $G^{(p)}$ that $G^{(p)}_{i_1, i_2, i_3} = 0$ for $(i_1, i_2, i_3) \in \mathcal{I}$. Thus 
\[
(M_3^{(p)})_{i_1, i_2, i_3} =  \ex\left[ X_{i_1}X_{i_2}X_{i_3} \right]
\]
for $(i_1, i_2, i_3) \in \mathcal{I}$. In the following, $(M^{(p)}_3)_{\I}$ denotes 
the incomplete tensor obtained by restricting the indices of $M^{(p)}_3$ to $\I$.

The key to using the $M_3^{(p)}$-tensor 
for estimation of the $\mu_i(z)$-s is the following rank-$K$ tensor decomposition,
\begin{equation}
    M_3^{(p)} = 
    \sum_{z = 1}^K \P(Z = z) \boldsymbol{\mu}_{1:p} (z)  \otimes 
    \boldsymbol{\mu}_{1:p}(z) \otimes \boldsymbol{\mu}_{1:p}(z);
\end{equation}
see Theorem 3.3 in \citep{anandkumar2014tensor} or the derivations on page 2 in 
\citep{guo2022incomplete_tensor}. 

\citet{guo2022incomplete_tensor} propose an algorithm based on incomplete tensor 
decomposition as follows: Let $(\widehat{M}_3^{(p)})_{\I}$ denote an estimate of the 
incomplete tensor $(M_3^{(p)})_{\I}$; obtain an 
approximate rank-$K$ tensor decomposition of the incomplete tensor $(\widehat{M}_3^{(p)})_{\I}$; extract estimates $\check{\boldsymbol{\mu}}_{1:p}(1), \ldots, \check{\boldsymbol{\mu}}_{1:p}(K)$ 
from this tensor decomposition. 
Theorem 4.2 in \citep{guo2022incomplete_tensor} shows that if the vectors $\boldsymbol{\mu}_{1:p}(1), \ldots, \boldsymbol{\mu}_{1:p}(K)$ satisfy certain regularity assumptions, they 
are estimated consistently by their algorithm (up to permutation) 
if $(\widehat{M}_3^{(p)})_{\I}$ is consistent. We note that the regularity assumptions 
are fulfilled for generic vectors in $\real^p$.

A computational downside of working directly with $M_3^{(p)}$ is that it grows 
cubically with $p$. \cite{anandkumar2014tensor} propose to consider 
$\widetilde{\bX}^{(p)} = \mathbf{W}^T \bX_{1:p} \in \real^K$, where $\mathbf{W}$ is a 
$p \times K$ whitening matrix. The tensor decomposition is then computed 
for the corresponding $K \times K \times K$ tensor $\widetilde{M}_3$. When 
$K < p$ is fixed and $p$ grows, this is computationally advantageous. Theorem 
5.1 in \citet{anandkumar2014tensor} shows that, under a generically satisfied 
non-degeneracy condition, the tensor decomposition of $\widetilde{M}_3$ can 
be estimated consistently (up to permutation) if $\widetilde{M}_3$ can be estimated consistently. 

To use the methodology from \citet{anandkumar2014tensor} in Algorithm \ref{alg:leanalgmix}, 
we replace Step~4 by their Algorithm 1 applied to $\widetilde{\bx}^{(0, p)} = \mathbf{W}^T \bx_{1:p}^{(0)}$. This will estimate the transformed mean vectors 
$\widetilde{\boldsymbol{\mu}}^{(p)}(z) = \mathbf{W}^T \boldsymbol{\mu}_{1:p}(z) \in \real^K$.
Likewise, we replace Step 5 in Algorithm \ref{alg:leanalgmix} by 
\[
\hat{z}_k = \argmin_{z} \left\| \widetilde{\bx}^{(p)}  - \check{\widetilde{\boldsymbol{\mu}}}^{(p)}(z) \right\|_2
\]
where $\widetilde{\bx}^{(p)} = \mathbf{W}^T \bx_{1:p}$. The separation and 
relative errors conditions should then be expressed in terms of the 
$p$-dependent $K$-vectors $\widetilde{\boldsymbol{\mu}}^{(p)}(1), \ldots, 
\widetilde{\boldsymbol{\mu}}^{(p)}(K) \in \real^K$.

\section{Simulation Study}
\label{sec:simulations}
Our analysis in Section \ref{sec:finite_mixture_model_full} shows that Algorithm 3 is capable 
of consistently estimating the $\beta_i$-parameters via substitute adjustment for 
$n, m, p \to \infty$ appropriately. The purpose of this section is to shed light on 
the finite sample performance of substitute adjustment via a simulation study. 

The $X_i$-s are simulated according to a mixture model fulfilling Assumption 
\ref{ass:mixture}, and the outcome model is as in Example \ref{ex:lin}, which 
makes $b_x^i(z) = \ex[Y \mid X_i = x; Z = z]$ a partially linear model. Throughout, we 
take $m = n$ and $\mathcal{S}_0 = \mathcal{S}$ in Algorithm \ref{alg:leanalgmix}.
The simulations are carried out for different choices of $n$, $p$, $\boldsymbol{\beta}$
and $\mu_i(z)$-s, and we report results on both the mislabeling rate of the 
latent variables and the mean squared error (MSE) of the $\beta_i$-estimators.

\subsection{Mixture model simulations and recovery of $Z$} \label{sec:experiments_z} 
The mixture model in our simulations is given as follows. 

\begin{itemize}
    \item We set $K = 10$ and fix $p_{\max} = 1000$ and $n_{\max} = 1000$. 
    \item We draw $\mu_i(z)$-s independently and uniformly from $(-1, 1)$ for 
$z \in \{1, \ldots, K\}$ and $i \in \{1, \ldots, p_{\max} \}$.
    \item Fixing the $\mu_i(z)$-s and a choice 
of $\mu_{\mathrm{scale}} \in \{0.75, 1, 1.5\}$, we simulate $n_{\max}$ independent 
observations of $(\bX_{1:{p_{\max}}}, Z)$, each with the latent variable
$Z$ uniformly distributed on $\{1,...,K\}$, and $X_i$ given $Z=z$ being
$\mathcal{N}(\mu_{\mathrm{scale}} \cdot \mu_i(z), 1)$-distributed.
\end{itemize}

We use the algorithm from \citet{anandkumar2014tensor}, as described in 
Section \ref{sec:tensor_decomp}, for recovery. We replicate the simulation outlined above 
$10$ times, and we consider recovery of $Z$ for $p \in \{50, 100, 200, 1000\}$ and 
$n \in \{50, 100, 200, 500, 1000\}$. 
For replication $b \in \{1, \ldots, 10\}$ the actual values of the latent variables 
are denoted $z_{b,k}$. For each combination of $n$ and $p$ 
the substitutes are denoted $\hat{z}_{b,k}^{(n,p)}$. 
The mislabeling rate for fixed $p$ and $n$ is estimated as 
\[
\delta^{(n,p)} = \frac{1}{10} \sum_{b = 1}^{10} 
\frac{1}{n} \sum_{k=1}^n 1(\hat{z}_{b,k}^{(n,p)} \neq z_{b,k}).
\]

\begin{figure}
    \centering
    \begin{subfigure}{}
        \centering
        \includegraphics[scale=.6]{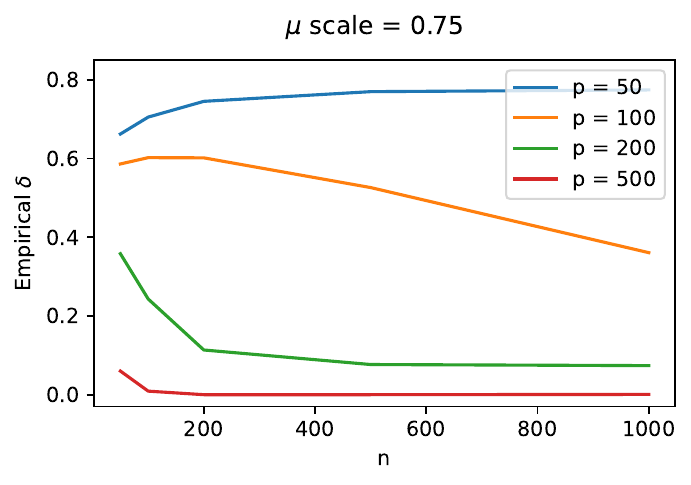}
    \end{subfigure}
    \begin{subfigure}{}
        \centering
        \includegraphics[scale=.6]{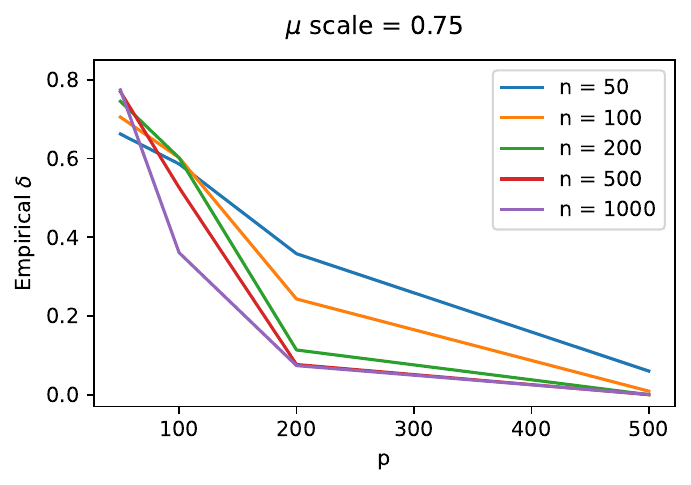}
    \end{subfigure}
    \vfill    
    \begin{subfigure}{}
        \centering
        \includegraphics[scale=.6]{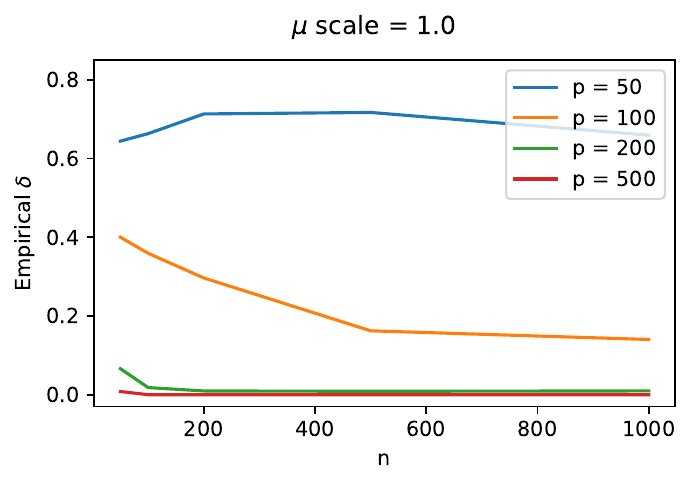}
    \end{subfigure}
    \begin{subfigure}{}
        \centering
        \includegraphics[scale=.6]{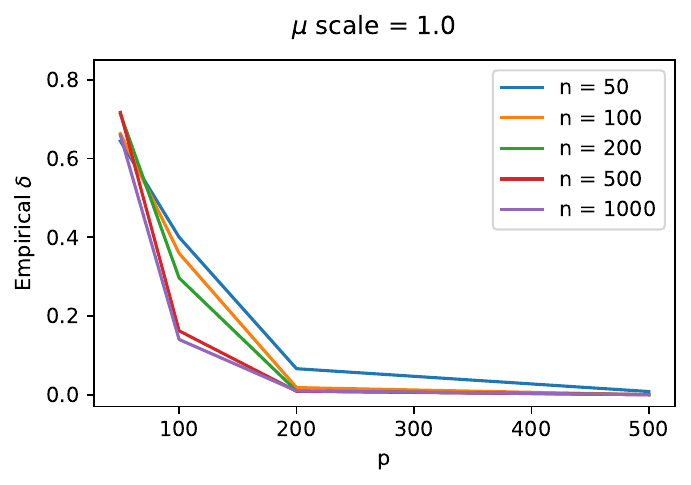}
    \end{subfigure}
    \vfill
    \begin{subfigure}{}
        \centering
        \includegraphics[scale=.6]{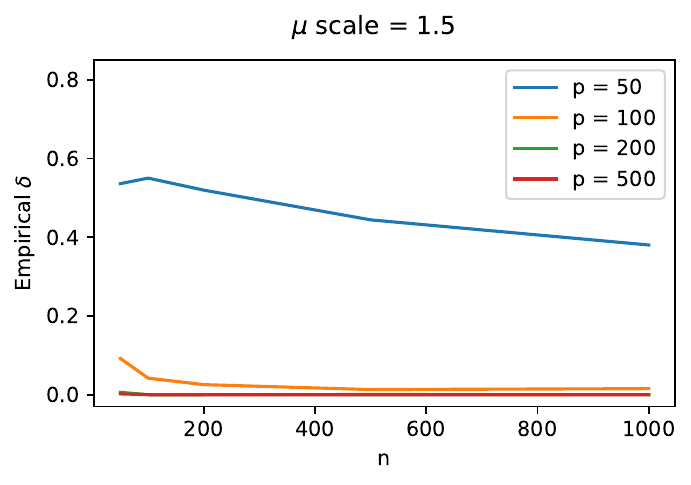}
    \end{subfigure}
    \begin{subfigure}{}
        \centering
        \includegraphics[scale=.6]{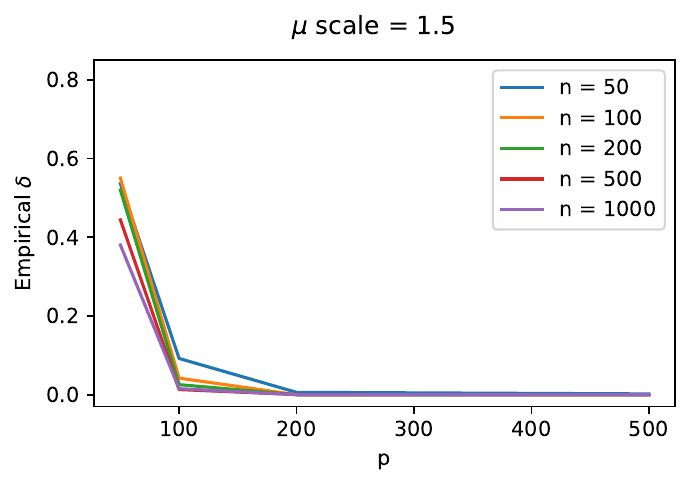}
    \end{subfigure}
    \caption{Empirical mislabeling rates as a function of $n = m$ and $p$ and for three different separation scales.}
    \label{fig:np_with_mu}
\end{figure}

Figure \ref{fig:np_with_mu} shows the estimated mislabeling rates from the simulations. 
The results demonstrate that for reasonable choices of $n$ and $p$, the 
algorithm based on \citep{anandkumar2014tensor} is capable of recovering  $Z$ 
quite well. 

The theoretical upper bounds of the mislabeling rate in Proposition 
\ref{prop:error_rate_bound} are monotonely decreasing as functions  
of $\norm{\boldsymbol{\mu}_{1:p}(z) - \boldsymbol{\mu}_{1:p}(v)}_2$. These are, 
in turn, monotonely increasing in $p$ and in $\mu_{\mathrm{scale}}$. 
The results in Figure \ref{fig:np_with_mu} support that this behavior of 
the upper bounds carry over to the actual mislabeling rate. Moreover, 
the rapid decay of the mislabeling rate with $\mu_{\mathrm{scale}}$ is in 
accordance with the exponential decay of the upper bound in 
the sub-Gaussian case.

\subsection{Outcome model simulation and estimation of $\beta_i$} \label{sec:experiments_beta}
Given simulated $Z$-s and $X_i$-s as described in Section \ref{sec:experiments_z},
we simulate the outcomes as follows.

\begin{itemize}
    \item Draw $\beta_i$ independently and uniformly from $(-1, 1)$ for $i = 1, \ldots, p_{\max}$.
    \item Fix $\gamma_{\mathrm{scale}} \in \{0, 20, 40, 100, 200\}$ and let 
    $\gamma_z = \gamma_{\mathrm{scale}} \cdot z$ for $z \in \{1, \ldots, K\}$.
    \item With $\epsilon \sim \mathcal{N}(0, 1)$ simulate $n_{\max}$ independent outcomes as
    \begin{equation*}
        Y = \sum_{i=1}^{p_{\max}} \beta_i X_i + \gamma_Z + \epsilon.
    \end{equation*}  
\end{itemize}

\begin{figure}
    \centering
    \begin{subfigure}
        \centering
        \includegraphics[scale=.6]{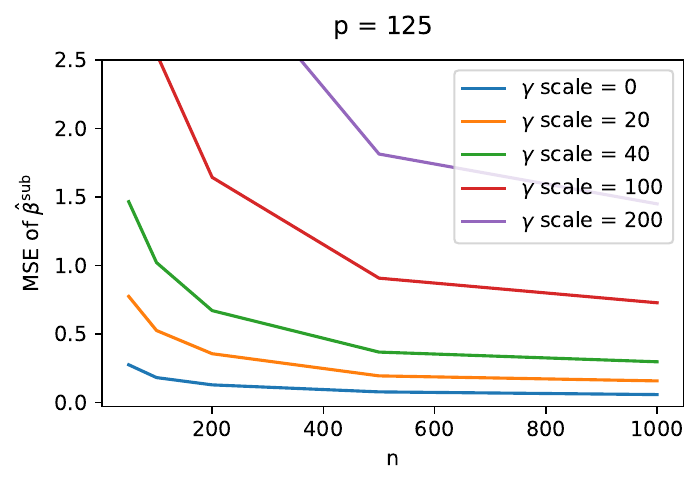}
    \end{subfigure}
    \begin{subfigure}
        \centering
        \includegraphics[scale=.6]{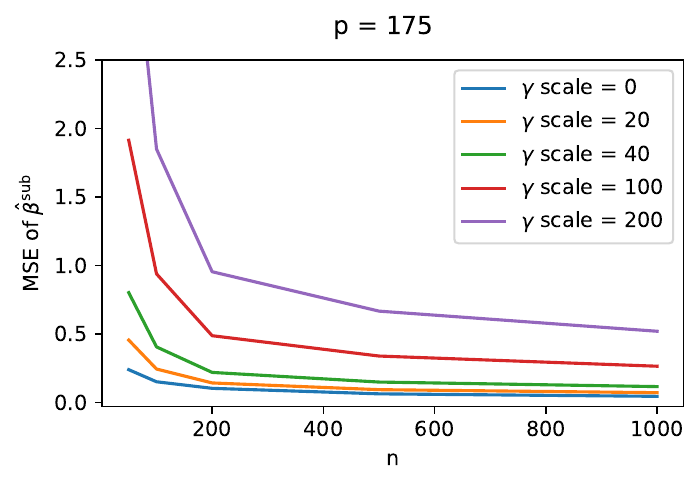}
    \end{subfigure}
    \caption{Average MSE for substitute adjustment using Algorithm \ref{alg:leanalgmix} 
    as a function of sample size $n$ and for two different dimensions, a range  of the unobserved confounding levels, and with $\mu_{\mathrm{scale}} = 1$.}
    \label{fig:n_vs_gamma}
\end{figure}

The simulation parameter $\gamma_{\mathrm{scale}}$ captures a potential effect 
of unobserved $X_i$-s for $i > p_{\max}$. We refer to this effect as \emph{unobserved 
confounding}. For $p < p_{\max}$, adjustment using the 
naive linear regression model $\sum_{i=1}^p \beta_i x_i$ would lead 
to biased estimates even if $\gamma_{\mathrm{scale}} = 0$, while the naive linear
regression model for $p = p_{\max}$ would be correct when $\gamma_{\mathrm{scale}} = 0$. 
When $\gamma_{\mathrm{scale}} > 0$, adjusting via naive linear 
regression for all observed $X_i$-s 
would still lead to biased estimates due to the unobserved confounding.

We consider the estimation error for $p \in \{125, 175\}$ and 
$n \in \{50, 100, 200, 500, 1000\}$. 
Let $\beta_{b,i}$ denote the $i$-th parameter in the $b$-th replication, 
and let $\hat{\beta}_{b,i}^{\s, n, p}$ denote the corresponding 
estimate from Algorithm \ref{alg:leanalgmix} for each combination of $n$ and $p$. The 
average MSE of $\hat{\boldsymbol{\beta}}_{b}^{\s, n, p}$  is computed as 
\[
\mathrm{MSE}^{(n,p)} = \frac{1}{10} \sum_{b=1}^{10} \frac{1}{p} \sum_{i=1}^p (\hat{\beta}_{b,i}^{\s, n, p} - 
\beta_{b,i})^2.
\]

Figure \ref{fig:n_vs_gamma} shows the MSE for the different combinations of $n$ and 
$p$ and for different choices of $\gamma_{\mathrm{scale}}$. Unsurprisingly, 
the MSE decays with sample size and increases with the magnitude of unobserved confounding. 
More interestingly, we see a clear decrease with the dimension $p$ indicating that the 
lower mislabeling rate for larger $p$ translates to a lower MSE as well.

\begin{figure}
    \centering
    \begin{subfigure}{}
        \centering
        \includegraphics[scale=.6]{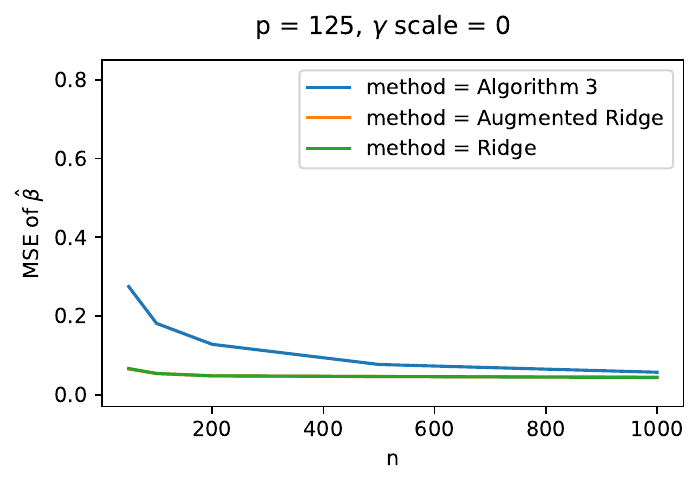}
    \end{subfigure}
    \begin{subfigure}{}
        \centering
        \includegraphics[scale=.6]{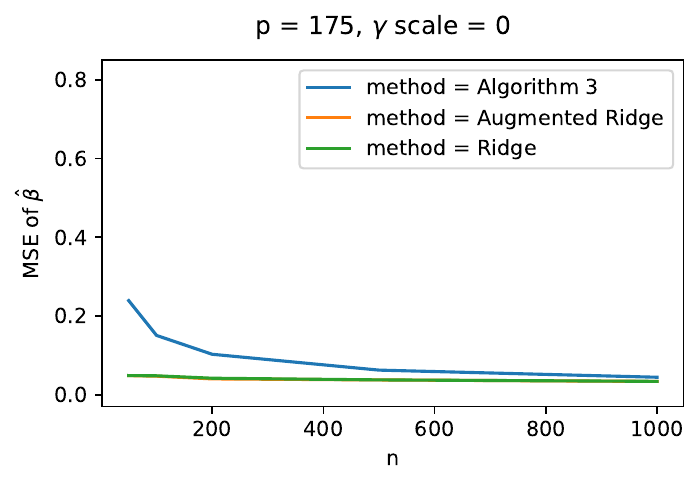}
    \end{subfigure}
    \vfill 
    \begin{subfigure}{}
        \centering
        \includegraphics[scale=.6]{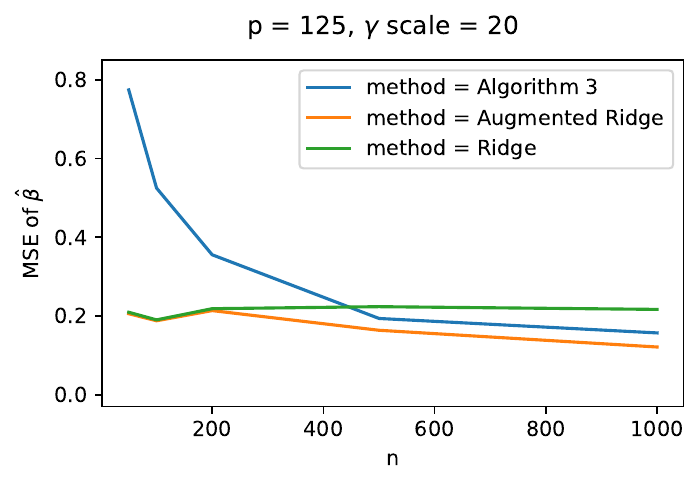}
    \end{subfigure}
    \begin{subfigure}{}
        \centering
        \includegraphics[scale=.6]{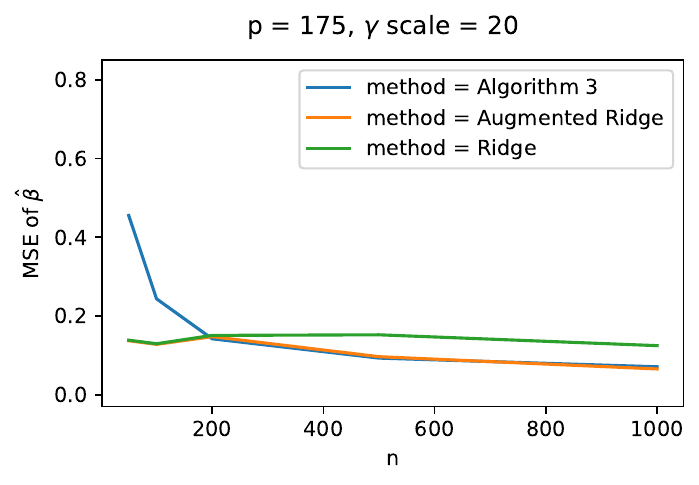}
    \end{subfigure}
    \vfill 
    \begin{subfigure}{}
        \centering
        \includegraphics[scale=.6]{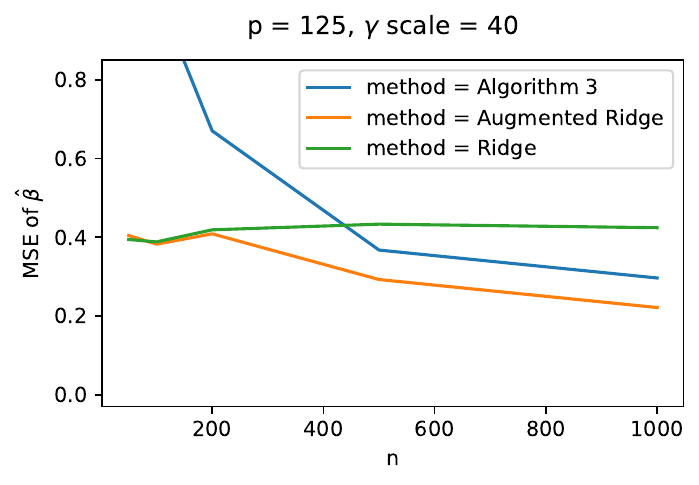}
    \end{subfigure}
    \begin{subfigure}{}
        \centering
        \includegraphics[scale=.6]{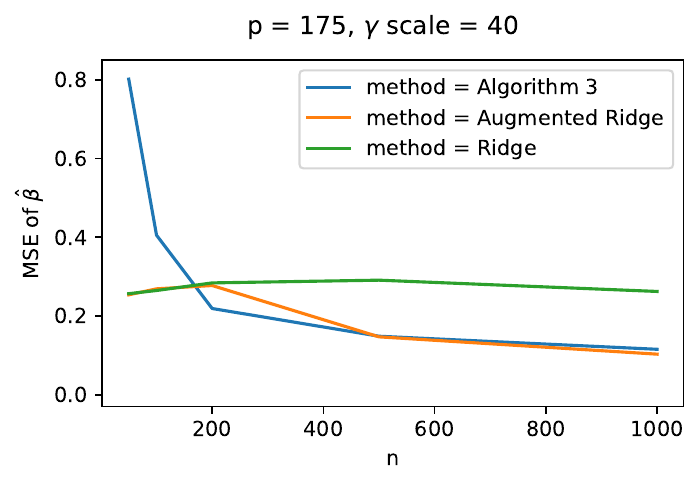}
    \end{subfigure}
    \caption{Average MSE for substitute adjustment using Algorithm \ref{alg:leanalgmix} 
    compared to average MSE for the ridge and augmented ridge estimators for 
    two different dimensions, a range of unobserved confounding levels, and with 
    $\mu_{\mathrm{scale}} = 1$.}
    \label{fig:methods_compare}
\end{figure}

Finally, we compare the results of Algorithm \ref{alg:leanalgmix} with two other approaches. 
Letting $\mathbb{X}$ denote the $n \times p$ model matrix for the $x_{i,k}$-s
and $\by$ the $n$-vector of outcomes, the ridge regression estimator is 
given as 
\begin{equation*}
    \hat{\boldsymbol{\beta}}^{(n,p)}_{\mathrm{Ridge}} = 
    \argmin_{\boldsymbol{\beta} \in \mathbb R^p} \min_{\beta_0 \in \real} \| \by - \beta_0 - \mathbb{X} \boldsymbol{\beta} \|_2^2  + \lambda \norm{\boldsymbol{\beta}}_2^2,
\end{equation*}
with $\lambda$ chosen by five-fold cross-validation. The 
augmented ridge regression estimator is given as
\begin{align*}
     \hat{\boldsymbol{\beta}}^{(n,p)}_{\text{Aug-Ridge}} = \argmin_{\boldsymbol{\beta}\in \mathbb R^p} \min_{\boldsymbol{\gamma} \in \mathbb R^K} & 
     \left\| \by - \left[ \mathbb{X}, \hat{\bZ} \right] 
    \left[ \begin{array}{c} \boldsymbol{\beta} \\ \boldsymbol{\gamma}  \end{array} \right] \right\|_2^2 
     + \lambda \norm{\boldsymbol{\beta}}_2^2, 
\end{align*}
where $\hat{\bZ}$ is the $n \times K$ model matrix of dummy variable 
encodings of the substitutes. Again, $\lambda$ is chosen by five-fold cross-validation.

The average MSE is computed for ridge regression and augmented ridge 
regression just as for substitute adjustment. Figure \ref{fig:methods_compare} 
shows results for $p=125$ and $p=175$. These two values of $p$ correspond to 
asymptotic (as $p$ stays fixed and $n\to\infty$) mislabeling rates $\delta$ 
around $7\%$ and $2\%$, respectively.

We see that both alternative estimators outperform Algorithm \ref{alg:leanalgmix} when the sample size is too small to learn $Z$ reliably. However, naive linear regression is biased, and so is ridge regression (even asymptotically), and its performance does not improve as the sample size, $n$, increases. Substitute 
adjustment as well as augmented ridge regression adjust for $\hat Z$, and their performance 
improve with $n$, despite the fact that $p$ is too small to recover $Z$ exactly. 
When $n$ and the amount of unobserved confounding is sufficiently large, both of 
these estimators outperform ridge regression. Note that it is unsurprising that the 
augmented ridge estimator performs similarly to Algorithm \ref{alg:leanalgmix} for large sample sizes, because after adjusting for the substitutes, the $x_{i,k}$-residuals are roughly 
orthogonal if the substitutes give accurate recovery, and a joint regression will give  estimates similar to those of the marginal regressions. 


We made a couple of observations (data not shown) during the simulation study. 
We experimented with changing the mixture distributions to other 
sub-Gaussian distributions as well as to the Laplace distribution and got similar results 
as shown here using the Gaussian distribution. We also implemented sample splitting, 
and though Proposition \ref{prop:error_rate_bound} assumes sample splitting, we found that the improved estimation accuracy attained by using all available data for the tensor decomposition outweighs the benefit of sample splitting in the recovery stage. 

In conclusion, our simulations show that for reasonable finite $n$ and $p$, it is 
possible to recover the latent variables sufficiently well for substitute adjustment 
to be a better alternative than naive linear or ridge regression in settings where the 
unobserved confounding is sufficiently large.

\section{Discussion} \label{sec:discussion}
We break the discussion into three parts. In the first part we revisit the discussion 
about the causal interpretation of the target parameters $\chi_x^i$ 
treated in this paper. In the second part we discuss 
substitute adjustment as a method for 
estimation of these parameters as well as the assumption-lean parameters $\beta_i$.
In the third part we 
discuss possible extensions of our results

\subsection{Causal interpretations}

The main causal question is whether a contrast of the form $\chi_x^i - \chi_{x_0}^i$ 
has a causal interpretation as an average treatment effect. The framework in
\citep{wang2019blessings} and the subsequent criticisms by \citet{damour2019multi-cause}
and \citet{ogburn2020counterexamples} are based on the $X_i$-s all being causes 
of $Y$, and on the possibility of unobserved confounding. Notably, the latent 
variable $Z$ to be recovered is not equal to an unobserved confounder, 
but \citet{wang2019blessings} argue that using the deconfounder allows us to weaken 
the assumption of ``no unmeasured confounding'' to 
``no unmeasured single-cause confounding''. The assumptions made in 
\citep{wang2019blessings} did not fully justify this claim, and we found it 
difficult to understand precisely what the causal assumptions related to $Z$ 
were. 

Mathematically precise assumptions 
that allow for identification of causal parameters from a  
finite number of causes, $X_1, \ldots, X_p$, via deconfounding are 
stated as Assumptions 1 and 2 in 
\citep{wang2020towards}.
We find these assumptions regarding 
recovery of $Z$ (also termed ``pinpointing'' in the context of the deconfounder) 
for finite $p$ implausible. Moreover, the entire framework 
of the deconfounder rests on the 
causal assumption of ``weak unconfoundedness'' in Assumption 1 and 
Theorem 1 of \citep{wang2020towards}, which might be needed for a causal 
interpretation but is unnecessary for the deconfounder algorithm to 
estimate a meaningful target parameter. 

We find it beneficial to disentangle the causal interpretation from the 
definition of the target parameter. By defining the target parameter entirely 
in terms of the observational distribution of observed (or, at least, observable)
variables, we can discuss the properties of the statistical method of 
substitute adjustment without making causal claims. We have shown
that substitute adjustment under our Assumption \ref{ass:variable} on 
the latent variable model targets 
the adjusted mean irrespectively of any unobserved confounding. 
\cite{Grimmer:2023} present a similar view. The contrast $\chi_x^i - \chi_{x_0}^i$
might have a causal interpretation in specific applications, but 
substitute adjustment as a statistical method does not rely on such an 
interpretation or assumptions needed to justify such an interpretation. 
In any specific application with multiple causes and potential 
unobserved confounding, substitute adjustment might be a useful 
method for deconfounding, but depending on the context and the causal 
assumptions we are willing to make, other methods could be 
preferable \citep{Wang:2023b}.

\subsection{Substitute adjustment: interpretation, merits and deficits}
We define the target parameter as an adjusted mean when adjusting for 
an \emph{infinite} number of variables. Clearly, this is a mathematical 
idealization of adjusting for a large number of variables, 
but it also has some important technical consequences. 
First, the recovery Assumption \ref{ass:variable}(2) is a more 
plausible modelling assumption than recovery from a finite number of 
variables. Second, it gives a clear qualitative difference 
between the adjusted mean of one (or any finite number of) variables
and regression on all variables. Third, the natural requirement in 
Assumption \ref{ass:variable}(2) that $Z$ can be recovered from $\bX_{-i}$
for any $i$ replaces the minimality of a ``multi-cause separator''
from \citep{wang2020towards}. Our assumption is that $\sigma(Z)$ is 
sufficiently minimal in a very explicit way, which ensures that $Z$ does 
not contain information unique to any single $X_i$. 

\cite{Grimmer:2023} come to a similar conclusion as we do: that the target parameter 
of substitute adjustment (and the deconfounder) is the adjusted mean $\chi^i_x$,
where you adjust for an infinite number of variables. They argue forcefully that 
substitute adjustment, using a finite number $p$ of variables, does not  
have an advantage over naive regression, that is, over estimating 
the regression function $\ex\left[Y \mid X_1 = x_1, \ldots, X_p = x_p \right]$
directly. With $i = 1$, say, they argue that substitute adjustment 
is effectively assuming a partially linear, 
semiparametric regression model 
\[
\ex\left[Y \mid X_1 = x_1, \ldots, X_p = x_p \right] 
= \beta_0 + \beta_1 x_1 + h(x_2, \ldots, x_p), 
\]
with the specific constraint that $h(x_2, \ldots, x_p) = 
g(\hat{z}) = g(f^{(p)}(x_2, \ldots, x_p))$. We 
agree with their analysis and conclusion; substitute adjustment 
is implicitly a way of making assumptions about $h$. It is also a way to leverage those 
assumptions, either by shrinking the bias compared to directly 
estimating a misspecified (linear, say) $h$, or by 
improving efficiency over methods that use a too flexible model of $h$.
We believe there is room for further studies of such bias and 
efficiency tradeoffs. 

We also believe that there are two potential benefits of substitute 
adjustment, which are not brought forward by \cite{Grimmer:2023}. First, the latent variable model can be estimated without access to 
outcome observations. This means that the inner part of $h = g \circ f^{(p)}$
could, potentially, be estimated very accurately on the basis of a large sample 
$\mathcal{S}_0$ in cases where it would be difficult to estimate the 
composed map $h$ accurately from $\mathcal{S}$ alone. Second, when $p$ is 
very large, e.g., in the millions, but $Z$ is low-dimensional, there can 
be huge computational advantages to running $p$ small parallel regressions 
compared to just one naive linear regression of $Y$ on all of $\bX_{1:p}$, 
let alone $p$ naive partially linear regressions.

\subsection{Possible extensions}
We believe that our error bound in Theorem \ref{thm:error_bound} is 
an interesting result, which in a precise way bounds the error of an OLS estimator 
in terms of errors in the regressors. This result is closely related to the 
classical literature on errors-in-variables models (or measurement error models) \citep{Durbin:1954, Cochran:1968, Schennach:2016}, though this literature focuses on 
methods for bias correction when the errors are non-vanishing. We see two 
possible extensions of our result. For one, Theorem \ref{thm:error_bound} could 
easily be generalized to $E = \real^d$. In addition, it might 
be possible to apply the bias correction techniques 
developed for errors-in-variables to improve the finite sample 
properties of the substitute adjustment estimator. 

Our analysis of the recovery error could also be extended. The concentration 
inequalities in Section \ref{sec:error_rate_bound} are 
unsurprising, but developed to match our specific needs for a 
high-dimensional analysis with as few assumptions as possible. 
For more refined results on finite mixture estimation see, e.g., \citep{Heinrich:2018},
and see \citep{Ndaoud:2022} for optimal recovery when $K = 2$ and 
the mixture distributions are Gaussian. In cases where the mixture 
distributions are Gaussian, it is also plausible that specialized algorithms such as \citep{kalai_moikra2012disentangling,gandhi_REU2016moment} are more efficient
than the methods we consider based on conditional means only. 

One general concern with substitute adjustment is model misspecification. 
We have done our analysis with minimal distributional assumptions, 
but there are, of course, two fundamental assumptions: the assumption of 
conditional independence of the $X_i$-s given the latent variable $Z$, 
and the assumption that $Z$ takes values in a finite set of size $K$. An important 
extension of our results is to study robustness to violations of these two 
fundamental assumptions. We have also not considered estimation of $K$, and it would 
likewise be relevant to understand how that affects the substitute 
adjustment estimator. 

\section*{Acknowledgments}
We thank Alexander Mangulad Christgau for helpful input. 
JA and NRH were supported by a research grant (NNF20OC0062897) from Novo Nordisk Fonden. JA also received funding from the European Union's Horizon 2020 research and innovation programme under Marie Skłodowska-Curie grant agreement No 801199.

\appendix

\section{Proofs and auxiliary results} \label{app:proofs}

\subsection{Proofs of results in Section \ref{sec:model_general_form}} \label{app:model_proofs}

\begin{proof}[Proof of Proposition \ref{prop:variablerep}]
Since $X_i$ as well as $\bX_{-i}$ take values in Borel spaces, 
there exists a regular conditional distribution given $Z = z$ of each 
\citep[Theorem 8.5]{Kallenberg:2021}. These 
are denoted $P^i_z$ and $P^{-i}_z$, respectively. Moreover, 
Assumption \ref{ass:variable}(2) and
the Doob-Dynkin lemma \citep[Lemma 1.14]{Kallenberg:2021}
imply that for each $i \in \nat$ there is a measurable map 
$f_i : \real^\nat \to E$ such that $Z = f_i(\bX_{-i})$. This implies 
that $P^{-i}(B) = \int P^{-i}_z(B) P^Z(\mathrm{d}z)$ for 
$B \subseteq \real^\nat$ measurable.

Since $Z = f_i(\bX_{-i})$ it holds that $f_i(P^{-i}) = P^Z$, and furthermore 
that $P^{-i}_z(f^{-1}_i(\{z\})) = 1$. 
Assumption \ref{ass:variable}(1) implies that $X_i$ and $\bX_{-i}$ are conditionally 
independent given $Z$, thus for $A, C \subseteq \real$ and $B \subseteq E$ 
measurable sets and $\tilde{B} = f_i^{-1}(B) \subseteq \real^\nat$, 
\begin{align*}
    \P(X_i \in A, Z \in B, Y \in C) & = \P(X_i \in A, \bX_{-i} \in \tilde{B}, Y \in C) \\
    & = \int  1_A(x) 1_{\tilde{B}}(\bx) P_{x,\bx}^i(C) 
    P(\mathrm{d} x, \mathrm{d} \bx) \\
    & = \int 1_A(x) 1_{\tilde{B}}(\bx) \, P_{x,\bx}^i(C) 
    \int P_z^i \otimes P_z^{-i}(\mathrm{d} x, \mathrm{d} \bx) P^Z(\mathrm{d}z) \\
    & = \iiint  1_A(x) 1_{\tilde{B}}(\bx) \, P_{x,\bx}^i(C) 
    P_z^i(\mathrm{d} x) P_z^{-i}(\mathrm{d} \bx) P^Z(\mathrm{d}z) \\
    & = \iiint  1_A(x) 1_{B}(z) \, \int P_{x,\bx}^i(C) P_z^{-i}(\mathrm{d} \bx)  P_z^i(\mathrm{d} x)  P^Z(\mathrm{d}z) \\
    & = \iint  1_A(x) 1_{B}(z) \, Q^i_{x,z}(C)  P_z^i(\mathrm{d} x)  P^Z(\mathrm{d}z). \\
\end{align*}
Hence $Q^i_{x,z}$ is a regular conditional distribution 
of $Y$ given $(X_i, Z) = (x, z)$. 

We finally find that 
\begin{align*}
    \chi_x^i & = \iint y \, P^i_{x, \bx} (\mathrm{d} y)
    P^{-i} (\mathrm{d}\bx) \\ 
    & = \iiint y \, P^i_{x, \bx} (\mathrm{d} y)
    P^{-i}_z (\mathrm{d}\bx) P^Z(\mathrm{d}z) \\
    & = \iint  y \, \int P^i_{x, \bx} (\mathrm{d} y)
    P^{-i}_z (\mathrm{d}\bx) P^Z(\mathrm{d}z) \\
    & = \iint  y \, Q^i_{z, \bx} (\mathrm{d} y) P^Z(\mathrm{d}z). \\
\end{align*}
\end{proof}

\begin{proof}[Proof of Proposition \ref{prop:lean-rep}] 
We find that  
\begin{align*}
        \Cov \left[X_i, Y \mid Z \right] 
    & = \ex \left[(X_i - \ex[X_i \mid Z]) Y \mid Z \right] \\
    & = \ex \left[ \ex  \left[(X_i - \ex[X_i \mid Z]) Y \mid X_i, Z \right] \mid Z \right] \\
    & = \ex \left[ (X_i - \ex[X_i \mid Z]) \ex \left[Y \mid X_i, Z \right] \mid Z \right] \\
    & = \ex \left[ (X_i - \ex[X_i \mid Z]) b_{X_i}^i(Z) \mid Z \right] \\
    & = \Cov \left[X_i, b^i_{X_i}(Z) \mid Z \right],
\end{align*}
which shows \eqref{eq:beta-i-rep}. From this representation, if $b_x^i(z) = b^i(z)$ 
does not depend on $x$, $b^i(Z)$ is $\sigma(Z)$-measurable and 
$\Cov \left[X_i, b^i(Z) \mid Z \right] = 0$, whence $\beta_i = 0$. 

If $b^i_{x}(z) =  \beta_i'(z) x + \eta_{-i}(z)$, 
\[\Cov \left[X_i, b^i_{X_i}(Z) \mid Z \right] 
= \Cov \left[X_i, \beta_i'(Z) X_i + \eta_{-i}(Z) \mid Z \right] 
= \beta_i'(Z) \Var \left[X_i \mid Z \right],\]
and \eqref{eq:beta-i-rep-lin} follows. 
\end{proof}

\subsection{Auxiliary results related to Section \ref{sec:measurement_error} and proof of Theorem \ref{thm:error_bound}} \label{app:error_proof}

Let $\bZ$ denote the $n \times K$ matrix of dummy variable encodings of the 
$z_k$-s, and let $\hat{\bZ}$ denote the similar matrix for the substitutes $\hat{z}_k$-s.
With $P_{\bZ}$ and $P_{\hat \bZ}$ the orthogonal projections onto the column spaces of 
$\bZ$ and $\hat \bZ$, respectively, we can write the estimator from Algorithm \ref{alg:leanalgmix}
as 
\begin{equation}  \label{eq:betahat_subz}
    \widehat{\beta}^{\s}_i = \frac{\langle \bx_i - P_{\hat \bZ} \bx_i, \by - P_{\hat \bZ} \by \rangle}
    {\| \bx_i  - P_{\hat \bZ} \bx_i \|_2^2}.
\end{equation}
Here $\bx_i, \by \in \real^n$ denote the $n$-vectors of $x_{i,k}$-s and $y_k$-s, respectively, 
and $\langle \cdot, \cdot \rangle$ is the standard inner product on $\real^n$, so that, e.g., 
$\| \by \|_2^2 = \langle \by, \by \rangle$. 
The estimator, had we observed the latent variables, is similarly given as 
\begin{equation} \label{eq:betahat_z}
    \hat{\beta}_i = \frac{\langle \bx_i - P_{\bZ} \bx_i, \by - P_{\bZ} \by \rangle}
    {\| \bx_i - P_{\bZ} \bx_i \|_2^2}.
\end{equation}
The proof of Theorem \ref{thm:error_bound} is based on the following bound on 
the difference between the projection matrices.

\begin{lemma} \label{lem:proj_norm} Let $\alpha$ and $\delta$ be as defined by \eqref{eq:alpha} and \eqref{eq:delta}. If $\alpha > 0$ it holds that 
\begin{equation} \label{eq:proj_bound}
    \|P_{\bZ} - P_{\hat \bZ}\|_2 \leq \sqrt{\frac{2\delta}{\alpha}},
\end{equation}
where $\| \cdot \|_2$ above denotes the operator $2$-norm also known as the spectral norm.
\end{lemma}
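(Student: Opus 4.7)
The plan is to upper-bound the operator norm by the Frobenius norm of the one-sided product $(I - P_{\hat{\bZ}})P_{\bZ}$, and then to evaluate that Frobenius norm as a trace which admits a clean combinatorial lower bound in terms of the counts $n_{a,b} := \#\{k : z_k = a,\, \hat{z}_k = b\}$ for $a,b \in E$.

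First I would observe that $\bZ$ and $\hat{\bZ}$ each have $K$ orthogonal indicator columns, so $\mathrm{col}(\bZ)$ and $\mathrm{col}(\hat{\bZ})$ are both $K$-dimensional subspaces of $\real^n$. The standard identity for orthogonal projections onto equidimensional subspaces, $\|P_{\bZ}-P_{\hat{\bZ}}\|_2 = \|(I-P_{\hat{\bZ}})P_{\bZ}\|_2$ (both equal the sine of the largest principal angle), applies. Bounding the spectral norm by the Frobenius norm and using idempotency of $I-P_{\hat{\bZ}}$ gives
\[
\|P_{\bZ}-P_{\hat{\bZ}}\|_2^2 \le \|(I-P_{\hat{\bZ}})P_{\bZ}\|_F^2 = \mathrm{tr}(P_{\bZ}) - \mathrm{tr}(P_{\bZ}P_{\hat{\bZ}}) = K - \mathrm{tr}(P_{\bZ}P_{\hat{\bZ}}),
\]
so it suffices to show $\mathrm{tr}(P_{\bZ}P_{\hat{\bZ}}) \ge K - 2\delta/\alpha$. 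Writing $P_{\bZ} = \bZ D^{-1}\bZ^T$ with $D = \mathrm{diag}(n(1),\ldots,n(K))$ and similarly $P_{\hat{\bZ}} = \hat{\bZ}\hat D^{-1}\hat{\bZ}^T$, a direct calculation gives $\mathrm{tr}(P_{\bZ}P_{\hat{\bZ}}) = \sum_{a,b} n_{a,b}^2/(n(a)\hat n(b))$. I would drop the nonnegative off-diagonal terms and, using $n_{a,a} = n(a) - n^{\mathrm{err}}_a = \hat n(a) - \hat n^{\mathrm{err}}_a$ with $n^{\mathrm{err}}_a := \sum_{b\neq a} n_{a,b}$ and $\hat n^{\mathrm{err}}_a := \sum_{b\neq a} n_{b,a}$, invoke the elementary inequality $(1-x)(1-y)\ge 1-x-y$ for $x,y \in [0,1]$ on $n_{a,a}^2/(n(a)\hat n(a))$. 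Combining this with the bounds $n(a),\hat n(a) \ge n\alpha$ and the mislabeling identities $\sum_a n^{\mathrm{err}}_a = \sum_a \hat n^{\mathrm{err}}_a = n\delta$ delivers $\sum_a n_{a,a}^2/(n(a)\hat n(a)) \ge K - 2\delta/\alpha$ and hence the required trace bound.

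The step that requires care is getting the constant $\sqrt{2}$ in \eqref{eq:proj_bound} rather than $2$. A direct bound $\|P_{\bZ}-P_{\hat{\bZ}}\|_2 \le \|P_{\bZ}-P_{\hat{\bZ}}\|_F$ would only yield $2\sqrt{\delta/\alpha}$, because the nonzero eigenvalues of $P_{\bZ}-P_{\hat{\bZ}}$ come in signed pairs $\pm\sin\theta_i$ (one per nonzero principal angle) and consequently $\|P_{\bZ}-P_{\hat{\bZ}}\|_F^2 = 2\|(I-P_{\hat{\bZ}})P_{\bZ}\|_F^2$. Passing through the one-sided product, which has only $K$ nonzero singular values instead of $2K$, is precisely what recovers the sharper constant.
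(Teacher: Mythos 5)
Your proof is correct, and it takes a genuinely different route from the paper's. The paper's argument is a two-line application of a cited perturbation bound for orthogonal projections (Stewart, 1977): $\|P_{\bZ}-P_{\hat \bZ}\|_2 \leq \min\{\|\bZ^{+}\|_2,\|\hat{\bZ}^{+}\|_2\}\,\|\bZ-\hat{\bZ}\|_2$, combined with $\|\bZ^{+}\|_2 \leq (\alpha n)^{-1/2}$ (read off the diagonal Gram matrix) and $\|\bZ-\hat{\bZ}\|_2 \leq \|\bZ-\hat{\bZ}\|_F = \sqrt{2\delta n}$, which multiplies out to the same constant. You instead work directly with the projections: the equal-dimension identity $\|P_{\bZ}-P_{\hat\bZ}\|_2 = \|(I-P_{\hat\bZ})P_{\bZ}\|_2$ is valid here because $\alpha>0$ forces both column spaces to have dimension $K$, the trace computation $\mathrm{tr}(P_{\bZ}P_{\hat\bZ}) = \sum_{a,b} n_{a,b}^2/(n(a)\hat{n}(b))$ is exact since $\bZ^T\hat{\bZ}$ is the label confusion matrix, and the chain $(1-x)(1-y)\geq 1-x-y$ together with $\sum_a n^{\mathrm{err}}_a = \sum_a \hat{n}^{\mathrm{err}}_a = n\delta$ and $n(a),\hat{n}(a)\geq \alpha n$ does deliver $\mathrm{tr}(P_{\bZ}P_{\hat\bZ}) \geq K - 2\delta/\alpha$, hence the bound. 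What your approach buys is self-containedness (no appeal to matrix perturbation theory) and transparency about where the constant comes from: the two $\delta/\alpha$ contributions arise from the row- and column-normalized error rates, and your observation that passing to the one-sided product is what recovers $\sqrt{2}$ rather than $2$ is accurate, since the nonzero eigenvalues of $P_{\bZ}-P_{\hat\bZ}$ occur in pairs $\pm\sin\theta_i$. The paper's route is shorter but opaque about the constant; yours is longer but elementary and sharper in what it actually computes.
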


\begin{proof} When $\alpha > 0$, the matrices $\bZ$ and $\hat{\bZ}$ have full rank $K$. 
Let $\bZ^{+} = (\bZ^T\bZ)^{-1} \bZ^T$ and 
$\hat{\bZ}^{+} = (\hat{\bZ}^T\hat{\bZ})^{-1} \hat{\bZ}^T$ denote the Moore-Penrose inverses 
of $\bZ$ and $\hat \bZ$, respectively. Then $P_{\bZ} = \bZ \bZ^{+}$ and  $P_{\hat \bZ} = 
\hat{\bZ} \hat{\bZ}^{+}$. By Theorems 2.3 and 2.4 in \citep{stewart1977},
\[
 \|P_{\bZ} - P_{\hat \bZ}\|_2 \leq \min\left\{\|\bZ^{+}\|_2, \|\hat{\bZ}^{+}\|_2\right\} \ \|\bZ - \hat{\bZ}\|_2.
\]
The operator $2$-norm $\|\bZ^{+}\|_2$ is the square root of the largest eigenvalue of 
\[
(\bZ^T\bZ)^{-1} = \left( \begin{array}{cccc}
n(1)^{-1} & 0 & \ldots & 0 \\
0 & n(2)^{-1} & \ldots & 0 \\
\vdots & \vdots & \ddots & \vdots \\
0 & 0 & \ldots & n(K)^{-1} 
\end{array} \right).
\]
Whence $\|\bZ^{+}\|_2 \leq (n_{\min})^{-1/2} = (\alpha n)^{-1/2}$. The same bound is 
obtained for $\|\hat{\bZ}^{+}\|_2$, which gives 
\[
 \|P_{\bZ} - P_{\hat \bZ}\|_2 \leq \frac{1}{\sqrt{\alpha n}} \ \|\bZ - \hat{\bZ}\|_2.
\]
We also have that 
\[
\|\bZ - \hat{\bZ}\|_2^2 \leq \|\bZ - \hat{\bZ}\|_F^2 = \sum_{k=1}^n \sum_{i=1}^p 
(\bZ_{k,i} - \hat{\bZ}_{k,i})^2 = 2\delta n,
\]
because $\sum_{i=1}^p (\bZ_{k,i} - \hat{\bZ}_{k,i})^2 = 2$ precisely for those $k$ with $\hat{z}_k \neq z_k$ and $0$ otherwise. Combining the inequalities gives \eqref{eq:proj_bound}.
\end{proof}

Before proceeding with the proof of Theorem \ref{thm:error_bound}, note that 
\[
\sum_{k=1}^n (x_{i,k} - \overline{\mu}_i(z_k))^2 = \| \bx_i - P_{\bZ} \bx_i \|_2^2
= \| (I - P_{\bZ}) \bx_i \|_2^2 \leq \| \bx_i \|_2^2
\]
since $(I - P_{\bZ})$ is a projection. Similarly, 
$\sum_{k=1}^n (x_{i,k} - \hat{\mu}_i(\hat{z}_k))^2 = 
\| \bx_i - P_{\hat \bZ} \bx_i \|_2^2 \leq \|\bx\|_2^2$, thus 
\[
\rho = \frac{\min\left\{\| \bx_i - P_{\bZ} \bx_i \|_2^2, 
    \| \bx_i - P_{\hat \bZ} \bx_i \|_2^2 \right\}}{\| \bx_i\|_2^2} \leq 1.
\]

\begin{proof}[Proof of Theorem \ref{thm:error_bound}] First note that  since $I - P_{\hat \bZ}$ is an orthogonal projection, 
\[
\langle \bx_i - P_{\hat \bZ} \bx_i, \by - P_{\hat \bZ} \by \rangle 
= \langle \bx_i, (I - P_{\hat \bZ}) \by \rangle
\] and similarly for the other inner product in \eqref{eq:betahat_z}. Moreover, 
\[
\langle \bx_i, (I - P_{\hat \bZ}) \by \rangle - \langle \bx_i , (I - P_{\bZ}) \by \rangle
= \langle \bx_i, (P_{\bZ} - P_{\hat \bZ}) \by \rangle
\]
and 
\[
\| (I - P_{\bZ}) \bx_i \|_2^2 - \| (I - P_{\hat \bZ}) \bx_i \|_2^2 
= \| (P_{\hat \bZ} - P_{\bZ}) \bx_i \|_2^2.
\]
We find that 
\begin{align*}
    \widehat{\beta}^{\s}_i - \hat{\beta}_i & =
    \frac{\langle \bx_i, (I - P_{\hat \bZ}) \by \rangle}
    {\| (I - P_{\hat \bZ}) \bx_i \|_2^2} -
    \frac{\langle \bx_i , (I - P_{\bZ}) \by \rangle}
    {\| (I - P_{\bZ}) \bx_i \|_2^2} \\
    & =  \langle \bx_i, (I - P_{\hat \bZ}) \by \rangle \left(\frac{1}
    {\| (I - P_{\hat \bZ}) \bx_i \|_2^2} - \frac{1}
    {\| (I - P_{\bZ}) \bx_i \|_2^2} \right) \\
    & \qquad \qquad 
     + \frac{\langle \bx_i, (I - P_{\hat \bZ}) \by \rangle - \langle \bx_i , (I - P_{\bZ}) \by \rangle}
    {\| (I - P_{\bZ}) \bx_i \|_2^2} \\
    & =  \langle \bx_i, (I - P_{\hat \bZ}) \by \rangle \left( 
    \frac{\| (P_{\hat \bZ} - P_{\bZ}) \bx_i \|_2^2}
    {\| (I - P_{\hat \bZ}) \bx_i \|_2^2 \| (I - P_{\bZ}) \bx_i \|_2^2} \right) \\
    & \qquad \qquad 
     + \frac{\langle \bx_i, (P_{\bZ} - P_{\hat \bZ}) \by \rangle}
    {\| (I - P_{\bZ}) \bx_i \|_2^2}.
\end{align*}
This gives the following inequality, using that $\rho \leq 1$, 
\begin{align*}
    |\widehat{\beta}^{\s}_i - \hat{\beta}_i| & \leq 
    \frac{\|P_{\bZ} - P_{\hat \bZ}\|_2 \|\bx_i\|_2^3 \|\by\|_2}{\rho^2 \|\bx_i\|^4_2}
    + \frac{\|P_{\bZ} - P_{\hat \bZ}\|_2 \|\bx_i\|_2 \|\by\|_2}{\rho \|\bx_i\|^2_2} \\
    & = \left(\frac{1}{\rho^2} + \frac{1}{\rho} \right) \|P_{\bZ} - P_{\hat \bZ}\|_2 \frac{\|\by\|_2}{\|\bx_i\|_2} \\
    & \leq \frac{2}{\rho^2} \|P_{\bZ} - P_{\hat \bZ}\|_2 \frac{\|\by\|_2}{\|\bx_i\|_2}.
\end{align*}    
Combining this inequality with \eqref{eq:proj_bound} gives \eqref{eq:bias_bound}.
\end{proof}

\subsection{Auxiliary concentration inequalities. Proofs of Propositions  \ref{prop:recover} 
and \ref{prop:error_rate_bound}} \label{app:error_rate_bound}

\begin{lemma} \label{lem:chebyshev}
Suppose that Assumption \ref{ass:mixture} holds. Let $\check{\boldsymbol{\mu}}_{1:p}(z) \in \real^p$
for $z \in E$ and let $\hat{Z} = \argmin_{z} \| \bX_{1:p} - \check{\boldsymbol{\mu}}_{1:p}(z)\|_2$. Suppose 
that $R_{z,v}^{(p)} \leq \frac{1}{10}$ for all $z, v \in E$ with $v \neq z$ then 
\begin{equation} \label{eq:chebmis}
\P(\hat Z=v \mid Z=z) \leq \frac{25 \sigma_{\max}^2}
{\| \boldsymbol{\mu}_{1:p}(z) - \boldsymbol{\mu}_{1:p}(v)\|_2^2}.
\end{equation}
\end{lemma}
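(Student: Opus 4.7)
The plan is to reduce the multivariate event $\{\hat Z = v\}$ to a one-dimensional linear event in $\bX_{1:p}$, and then apply Chebyshev's inequality to a scalar projection whose conditional variance is controlled by $\sigma_{\max}^2$ alone (no factor of $p$). This is the reason the bound in \eqref{eq:chebmis} does not blow up with dimension.

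First, I would condition on $Z=z$ and rewrite the nearest-mean classification event. Squaring $\|\bX_{1:p} - \check{\boldsymbol{\mu}}_{1:p}(v)\|_2 \leq \|\bX_{1:p} - \check{\boldsymbol{\mu}}_{1:p}(z)\|_2$ and cancelling $\|\bX_{1:p}\|_2^2$ gives
\[
W := \bigl\langle \bX_{1:p} - \check{\boldsymbol{\mu}}_{1:p}(z),\, \check{\boldsymbol{\mu}}_{1:p}(v) - \check{\boldsymbol{\mu}}_{1:p}(z) \bigr\rangle \;\geq\; \tfrac{1}{2} D^2,
\]
where $D = \|\check{\boldsymbol{\mu}}_{1:p}(v) - \check{\boldsymbol{\mu}}_{1:p}(z)\|_2$. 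Thus $\{\hat Z = v\}$ is a one-sided event on the scalar $W$.

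Next I would compute the conditional moments of $W$ under $Z=z$. Since $E[\bX_{1:p} \mid Z=z] = \boldsymbol{\mu}_{1:p}(z)$, Cauchy--Schwarz gives
\[
|E[W \mid Z = z]| \;\leq\; \|\boldsymbol{\mu}_{1:p}(z) - \check{\boldsymbol{\mu}}_{1:p}(z)\|_2 \cdot D \;\leq\; \tfrac{d}{10}\, D,
\]
where $d = \|\boldsymbol{\mu}_{1:p}(z) - \boldsymbol{\mu}_{1:p}(v)\|_2$ and I used $R_{z,v}^{(p)} \leq 1/10$. The crucial variance bound uses the conditional independence from Assumption~\ref{ass:mixture}:
\[
\Var(W \mid Z=z) = \sum_{i=1}^p \bigl(\check{\mu}_i(v) - \check{\mu}_i(z)\bigr)^2 \sigma_i^2(z) \;\leq\; \sigma_{\max}^2 D^2.
\]
Two triangle-inequality applications with $R_{z,v}^{(p)}, R_{v,z}^{(p)} \leq 1/10$ then yield $D \geq \tfrac{4}{5} d$, so the threshold exceeds the conditional mean by
\[
\tfrac{1}{2}D^2 - E[W \mid Z=z] \;\geq\; \tfrac{1}{2}D^2 - \tfrac{d}{10}D \;=\; D\bigl(\tfrac{D}{2} - \tfrac{d}{10}\bigr) \;\geq\; D \cdot \tfrac{d}{5}.
\]

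Finally I would apply Chebyshev's inequality to $W$:
\[
\P(\hat Z = v \mid Z = z) \;\leq\; \frac{\Var(W \mid Z=z)}{\bigl(\tfrac{1}{2}D^2 - E[W \mid Z=z]\bigr)^2} \;\leq\; \frac{\sigma_{\max}^2 D^2}{(D d/5)^2} \;=\; \frac{25 \sigma_{\max}^2}{d^2},
\]
which is exactly \eqref{eq:chebmis}. The main obstacle is the bookkeeping of the three different distances (true-to-true $d$, estimated-to-estimated $D$, and the errors $R_{z,v}^{(p)} d$); the constant $1/10$ is precisely what makes all the triangle-inequality slack collapse into a clean factor of $25$, which also explains the remark that any constant strictly below $1/4$ would work at the cost of a larger prefactor.
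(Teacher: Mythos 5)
Your proof is correct and follows essentially the same route as the paper's: reduce $\{\hat Z = v\}$ to a one-sided event on the linear statistic $\langle \bX_{1:p}-\check{\boldsymbol{\mu}}_{1:p}(z),\,\check{\boldsymbol{\mu}}_{1:p}(v)-\check{\boldsymbol{\mu}}_{1:p}(z)\rangle$, control the mean shift by Cauchy--Schwarz and the relative-error bound, bound the conditional variance by $\sigma_{\max}^2 D^2$ via conditional independence, and apply Chebyshev. The only (cosmetic) difference is that you keep the threshold in the form $Dd/5$ so the $D^2$ cancels and only the lower bound $D\geq \tfrac45 d$ is needed, whereas the paper normalizes by $d$ and uses both bounds $\tfrac45 \leq B_{z,v}\leq\tfrac65$; both yield the constant $25$.
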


\begin{proof} Since $p$ is fixed throughout the proof, we simplify the notation by 
dropping the $1\!\! : \!\! p$ subscript and use, e.g., $\bX$ and $\boldsymbol{\mu}$ to denote 
the $\real^p$-vectors $\bX_{1:p}$ and $\boldsymbol{\mu}_{1:p}$, respectively.

Fix also $z, v \in E$ with $v \neq z$ and observe first that 
\begin{align*}
    (\hat{Z} = v) 
    &\subseteq 
        \left(\norm{\bX - \check{\boldsymbol{\mu}}(v)}_2 < \norm{\bX - \check{\boldsymbol{\mu}}(z)}_2 \right)\\
    &= \left(\langle \bX - \check{\boldsymbol{\mu}}(z), \check{\boldsymbol{\mu}}(z) - \check{\boldsymbol{\mu}}(v) \rangle < -\tfrac{1}{2}\norm{\check{\boldsymbol{\mu}}(z) - \check{\boldsymbol{\mu}}(v)}^2_2 \right) \\
    &= \Big(\langle \bX - \boldsymbol{\mu}(z), \check{\boldsymbol{\mu}}(z) - \check{\boldsymbol{\mu}}(v) \rangle <
    \\ & \qquad \qquad  -\left( \tfrac{1}{2}\norm{\check{\boldsymbol{\mu}}(z) - \check{\boldsymbol{\mu}}(v)}^2_2 
    + \langle \boldsymbol{\mu}(z) - \check{\boldsymbol{\mu}}(z), \check{\boldsymbol{\mu}}(z) - \check{\boldsymbol{\mu}}(v) \rangle\right) \Big).
\end{align*}
The objective is to bound the probability of the event above using Chebyshev's inequality.
To this end, we first use the Cauchy-Schwarz inequality to get 
\begin{align*}
    \tfrac{1}{2}\norm{\check{\boldsymbol{\mu}}(z) - \check{\boldsymbol{\mu}}(v)}^2_2 
    & + \langle \boldsymbol{\mu}(z) - \check{\boldsymbol{\mu}}(z), \check{\boldsymbol{\mu}}(z) - \check{\boldsymbol{\mu}}(v) \rangle  \\
    & \geq \tfrac{1}{2}\norm{\check{\boldsymbol{\mu}}(z) - \check{\boldsymbol{\mu}}(v)}^2_2 
    - \| \boldsymbol{\mu}(z) - \check{\boldsymbol{\mu}}(z) \|_2 \|\check{\boldsymbol{\mu}}(z) - \check{\boldsymbol{\mu}}(v) \|_2 \\
    & = \norm{\boldsymbol{\mu}(z) - \boldsymbol{\mu}(v)}_2^2 \left(\tfrac{1}{2} B_{z,v}^2 - R_{z,v}^{(p)} B_{z,v} \right),
\end{align*}
where 
\[
B_{z,v} = \frac{\norm{\check{\boldsymbol{\mu}}(z) - \check{\boldsymbol{\mu}}(v)}_2}
{\norm{\boldsymbol{\mu}(z) - \boldsymbol{\mu}(v)}_2}.
\]
The triangle and reverse triangle inequality give that 
\begin{align*}
\norm{\check{\boldsymbol{\mu}}(z) - \check{\boldsymbol{\mu}}(v)}_2
& \leq \norm{\boldsymbol{\mu}(z) - \boldsymbol{\mu}(v)}_2
+ \norm{\check{\boldsymbol{\mu}}(z) - \boldsymbol{\mu}(z)}_2 
+ \norm{\boldsymbol{\mu}(v) - \check{\boldsymbol{\mu}}(v)}_2 \\
\norm{\check{\boldsymbol{\mu}}(z) - \check{\boldsymbol{\mu}}(v)}_2 
& \geq \Big| \norm{\boldsymbol{\mu}(z) - \boldsymbol{\mu}(v)}_2 - 
\norm{\boldsymbol{\mu}(z) - \check{\boldsymbol{\mu}}(z)}_2 
- \norm{\boldsymbol{\mu}(v) - \check{\boldsymbol{\mu}}(v)}_2
 \Big|,
\end{align*}
and dividing by $\norm{\boldsymbol{\mu}(z) - \boldsymbol{\mu}(v)}_2$ 
combined with the bound $\tfrac{1}{10}$ on the relative errors yield
\begin{align*}
B_{z,v} & \leq 1 + R_{z,v}^{(p)} + R_{v,z}^{(p)} \leq \frac{6}{5}, \\
B_{z,v} & \geq \Big|1 - R_{z,v}^{(p)} - R_{v,z}^{(p)} \Big| \geq \frac{4}{5}.
\end{align*}

This gives 
\[
\tfrac{1}{2} B_{z,v}^2 - R_{z,v}^{(p)} B_{z,v} 
\geq \tfrac{1}{2} B_{z,v}^2 - \tfrac{1}{10} B_{z,v}  \geq \tfrac{6}{25}
\]
since the function $b \mapsto b^2 - \tfrac{2}{10} b$ is increasing 
for $b \geq \tfrac{4}{5}$.

Introducing the variables 
$W_i = (X_i - \mu_i(z))(\check \mu_i(z) - \check \mu_i(v))$ we 
conclude that 
\begin{align} \label{eq:Zinclusion}
    (\hat{Z} = v) 
    & \subseteq \left( 
    \sum_{i=1}^p W_i < - \tfrac{6}{25} \norm{\boldsymbol{\mu}(z) - \boldsymbol{\mu}(v)}_2^2 \right).
\end{align}
Note that $ \ex[W_i \mid Z=z] = 0$ and  
$\Var [W_i \mid Z=z] = (\check \mu_i(z) - \check \mu_i(v))^2 \sigma^2_i(z)$, 
and by Assumption \ref{ass:mixture}, the $W_i$-s are conditionally independent 
given $Z = z$, so Chebyshev's inequality gives that 
\begin{align*}
    \P(\hat Z = v \mid Z = z) & \leq \P\left(
    \sum_{i=1}^p W_i < - \tfrac{6}{25} \norm{\boldsymbol{\mu}(z) - \boldsymbol{\mu}(v)}_2^2 
    \given  Z = z \right) \\
    & \leq \left(\frac{25}{6}\right)^2  
    \frac{\sum_{i=1}^p (\check \mu_i(z) - \check \mu_i(v))^2 \sigma^2_i(z)}
    {\norm{\boldsymbol{\mu}(z) - \boldsymbol{\mu}(v)}_2^4} \\
    & \leq \left(\frac{25}{6}\right)^2  
    \frac{\sigma^2_{\max} \norm{\check{\boldsymbol{\mu}}(z) - \check{\boldsymbol{\mu}}(v)}_2^2}
    {\norm{\boldsymbol{\mu}(z) - \boldsymbol{\mu}(v)}_4^2} \\
    & \leq \left(\frac{25}{6}\right)^2 B_{z,v}^2
    \frac{\sigma^2_{\max}} 
    {\norm{\boldsymbol{\mu}(z) - \boldsymbol{\mu}(v)}_2^2} \\
    & \leq 
    \frac{25 \sigma^2_{\max}}
    {\norm{\boldsymbol{\mu}(z) - \boldsymbol{\mu}(v)}_2^2},
\end{align*}
where we, for the last inequality, used that $ B_{z,v}^2 \leq \left(\frac{6}{5}\right)^2$.
\end{proof}

Before proceeding to the concentration inequality for sub-Gaussian distributions, we 
use Lemma \ref{lem:chebyshev} to prove Proposition \ref{prop:recover}.

\begin{proof}[Proof of Proposition \ref{prop:recover}] 
Suppose that $i = 1$ for convenience. 
We take $\check{\boldsymbol{\mu}}_{1:p}(z) = \boldsymbol{\mu}_{1:p}(z)$ for all $p \in \nat$ and $z \in E$ and write $\hat{Z}_p = \argmin_{z} \| \bX_{2:p} - \boldsymbol{\mu}_{2:p}(z)\|_2$ 
for the prediction of $Z$ based on the coordinates $2, \ldots, p$.
With this oracle choice of $\check{\boldsymbol{\mu}}_{1:p}(z)$, the 
relative errors are zero, thus the bound \eqref{eq:chebmis} holds, and Lemma \ref{lem:chebyshev} 
gives 
\begin{align*}
    \P\left(\hat{Z}_p \neq Z\right) & = \sum_{z} \sum_{v \neq z} 
    \P\left(\hat{Z}_p = v, Z = z\right) \\
    & =  \sum_{z} \sum_{v \neq z} \P\left(\hat{Z}_p = v \given Z = z\right) \P\left(Z = z\right) \\
    & \leq \frac{C}{\min_{z \neq v} \norm{\boldsymbol{\mu}_{2:p}(z) - \boldsymbol{\mu}_{2:p}(v)}_2^2}
\end{align*}
with $C$ a constant independent of $p$. By \eqref{eq:sep}, $\min_{z \neq v} \norm{\boldsymbol{\mu}_{2:p}(z) - \boldsymbol{\mu}_{2:p}(v)}_2^2 \to \infty$ for $p \to \infty$, and by choosing a subsequence, $p_r$,
we can ensure that  
$\P\left(\hat{Z}_{p_r} \neq Z\right) \leq \frac{1}{r^2}.$ Then $\sum_{r=1}^{\infty} \P\left(\hat{Z}_{p_r} \neq Z\right) < \infty$, and by Borel-Cantelli's lemma, 
\[
\P\left(\hat{Z}_{p_r} \neq Z \ \text{infinitely often}\right) = 0.
\]
That is, $\P\left(\hat{Z}_{p_r} = Z \ \text{eventually}\right) = 1$, which shows 
that we can recover $Z$ from $(\hat{Z}_{p_r})_{r \in \nat}$ and thus from $\bX_{-1}$ 
(with probability $1$). Defining 
\[
Z' = \left\{ \begin{array}{ll} \lim\limits_{r \to \infty} \hat{Z}_{p_r} & \qquad 
\text{if } \hat{Z}_{p_r} = Z \text{ eventually} \\
0 & \qquad \text{otherwise} \end{array} \right.
\]
we see that $\sigma(Z') \subseteq \sigma(\bX_{-1})$ and $Z' = Z$ almost surely. Thus 
if we replace $Z$ by $Z'$ in Assumption \ref{ass:mixture} we see that Assumption \ref{ass:variable}(2) 
holds. 
\end{proof}

\begin{lemma} \label{lem:subgauss}
Consider the same setup as in Lemma \ref{lem:chebyshev}, that is, 
Assumption \ref{ass:mixture} holds and $R_{z,v}^{(p)} \leq \frac{1}{10}$ for all $z, v \in E$ with $v \neq z$. Suppose, in addition, that the conditional distribution of 
$X_i$ given $Z = z$ is sub-Gaussian with variance factor $v_{\max}$, independent of 
$i$ and $z$, then 
\begin{equation} \label{eq:suggaussmis}
\P(\hat Z=v \mid Z=z) \leq \exp\left( - \frac{1}{50 v_{\max}}
\| \boldsymbol{\mu}_{1:p}(z) - \boldsymbol{\mu}_{1:p}(v)\|_2^2 \right).
\end{equation}
\end{lemma}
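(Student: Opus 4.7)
The plan is to reuse the set inclusion from the proof of Lemma \ref{lem:chebyshev} and replace the Chebyshev step by a sub-Gaussian Chernoff bound. Writing again $\bX$ and $\boldsymbol{\mu}$ for $\bX_{1:p}$ and $\boldsymbol{\mu}_{1:p}$, the argument up through \eqref{eq:Zinclusion} is purely geometric and only used the bound $R_{z,v}^{(p)} \leq \tfrac{1}{10}$. I would therefore start the proof by invoking that inclusion: conditionally on $Z = z$,
\[
(\hat Z = v) \subseteq \Bigl(\sum_{i=1}^p W_i < - \tfrac{6}{25} \|\boldsymbol{\mu}(z) - \boldsymbol{\mu}(v)\|_2^2\Bigr),
\]
where $W_i = (X_i - \mu_i(z))(\check\mu_i(z) - \check\mu_i(v))$.

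The key step is then to observe that, conditional on $Z = z$, each $X_i - \mu_i(z)$ is centred and sub-Gaussian with variance factor $v_{\max}$, so $W_i$ is centred and sub-Gaussian with variance factor $(\check\mu_i(z) - \check\mu_i(v))^2 v_{\max}$. By Assumption \ref{ass:mixture}(1) the $W_i$ are conditionally independent, so sub-Gaussian variance factors add, and
\[
\sum_{i=1}^p W_i \given Z = z \quad\text{is sub-Gaussian with variance factor}\quad v_{\max} \|\check{\boldsymbol{\mu}}(z) - \check{\boldsymbol{\mu}}(v)\|_2^2.
\]
The standard one-sided Chernoff tail bound for sub-Gaussians then yields
\[
\P\Bigl(\sum_{i=1}^p W_i < - t \given Z = z\Bigr) \leq \exp\!\left(-\frac{t^2}{2 v_{\max} \|\check{\boldsymbol{\mu}}(z) - \check{\boldsymbol{\mu}}(v)\|_2^2}\right)
\]
for any $t > 0$, and I would apply this with $t = \tfrac{6}{25}\|\boldsymbol{\mu}(z) - \boldsymbol{\mu}(v)\|_2^2$.

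To finish, I would tidy up the constant using the bound $B_{z,v} \leq \tfrac{6}{5}$ already derived in the proof of Lemma \ref{lem:chebyshev} from $R_{z,v}^{(p)} \leq \tfrac{1}{10}$. Since $\|\check{\boldsymbol{\mu}}(z) - \check{\boldsymbol{\mu}}(v)\|_2^2 = B_{z,v}^2 \|\boldsymbol{\mu}(z) - \boldsymbol{\mu}(v)\|_2^2 \leq \tfrac{36}{25}\|\boldsymbol{\mu}(z) - \boldsymbol{\mu}(v)\|_2^2$, the exponent simplifies to
\[
-\frac{(6/25)^2 \|\boldsymbol{\mu}(z) - \boldsymbol{\mu}(v)\|_2^4}{2 v_{\max} \cdot (36/25)\|\boldsymbol{\mu}(z) - \boldsymbol{\mu}(v)\|_2^2}
= -\frac{\|\boldsymbol{\mu}(z) - \boldsymbol{\mu}(v)\|_2^2}{50 v_{\max}},
\]
which is exactly \eqref{eq:suggaussmis}.

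There is no serious obstacle here beyond verifying the bookkeeping on the constant; the only thing to be careful about is that sub-Gaussianity is used for $X_i - \mu_i(z)$ (centred under the conditional law), which is standard and compatible with the definition of the variance factor, and that the scaling factor $(\check\mu_i(z) - \check\mu_i(v))$ is $\sigma(Z)$-measurable (in fact deterministic once $\check{\boldsymbol{\mu}}$ is fixed), so it enters the variance factor as a square without any further assumptions.
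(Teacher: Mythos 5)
Your proposal is correct and follows essentially the same route as the paper's proof: reuse the inclusion \eqref{eq:Zinclusion}, note that the conditionally independent $W_i$ are sub-Gaussian with variance factors $(\check\mu_i(z)-\check\mu_i(v))^2 v_{\max}$ that add up to $v_{\max}\|\check{\boldsymbol{\mu}}_{1:p}(z)-\check{\boldsymbol{\mu}}_{1:p}(v)\|_2^2$, apply the one-sided Chernoff bound, and absorb $B_{z,v}\leq \tfrac{6}{5}$ into the constant. The constant bookkeeping $(6/25)^2/(2\cdot 36/25)=1/50$ checks out.
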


\begin{proof} Recall that $X_i$ given $Z = z$ being sub-Gaussian with variance factor $v_{\max}$
means that 
\[
\log \ex\left[ e^{\lambda(X_i - \mu_i(z))} \given Z = z \right] \leq \frac{1}{2}\lambda^2v_{\max}
\]
for $\lambda \in \real$. Consequently, with $W_i$ as in the proof of Lemma \ref{lem:chebyshev}, and using conditional independence of the $X_i$-s given $Z = z$, 
\begin{align*}
\log \ex\left[ e^{\lambda \sum_{i=1}^p W_i} \given Z = z \right] & =
\sum_{i=1}^p \log \ex\left[ e^{\lambda(\check{\mu}_i(z) - \check{\mu}_i(v))(X_i - \mu_i(z))} \given Z = z \right] \\
& \leq \frac{1}{2}\lambda^2v_{\max}\sum_{i=1}^p (\check{\mu}_i(z) - \check{\mu}_i(v))^2 \\
& = \frac{1}{2} \lambda^2v_{\max}\| \check{\boldsymbol{\mu}}_{1:p}(z) - \check{\boldsymbol{\mu}}_{1:p}(v)\|_2^2.
\end{align*}    
Using \eqref{eq:Zinclusion} in combination with the Chernoff bound gives
\begin{align*}
    \P(\hat Z = v \mid Z = z) & \leq \P\left(
    \sum_{i=1}^p W_i < - \tfrac{6}{25} \norm{\boldsymbol{\mu}_{1:p}(z) - \boldsymbol{\mu}_{1:p}(v)}_2^2 
    \given  Z = z \right) \\
    & \leq \exp\left( - \left(\frac{6}{25}\right)^2 \frac{\norm{\boldsymbol{\mu}_{1:p}(z) - \boldsymbol{\mu}_{1:p}(v)}_2^4} 
    {2 v_{\max}\| \check{\boldsymbol{\mu}}_{1:p}(z) - \check{\boldsymbol{\mu}}_{1:p}(v)\|_2^2} \right) \\
    & =  \exp\left( - \frac{1}{2 v_{\max}} \left(\frac{6}{25}\right)^2 B_{z,v}^{-2} \norm{\boldsymbol{\mu}_{1:p}(z) - \boldsymbol{\mu}_{1:p}(v)}_2^2 \right) \\
    & \leq \exp\left( - \frac{1}{50 v_{\max}}  \norm{\boldsymbol{\mu}_{1:p}(z) - \boldsymbol{\mu}_{1:p}(v)}_2^2 \right),
\end{align*}
where we, as in the proof of Lemma \ref{lem:chebyshev}, have used that the bound on the relative error implies that $B_{z,v} \leq \tfrac{6}{5}$.
\end{proof}

\begin{proof}[Proof of Proposition \ref{prop:error_rate_bound}] The argument proceeds as in 
the proof of Proposition \ref{prop:recover}. We first note that 
\begin{align*}
    \P\left(\hat{Z} \neq Z\right) & = \sum_{z} \sum_{v \neq z} 
    \P\left(\hat{Z} = v, Z = z\right) \\
    & =  \sum_{z} \sum_{v \neq z} \P\left(\hat{Z} = v \given Z = z\right) \P\left(Z = z\right). 
\end{align*}
Lemma \ref{lem:chebyshev} then gives 
\[
\P\left(\hat{Z} \neq Z\right) \leq \frac{25 K \sigma^2_{\max}}{\sep(p)}.
\]
If the sub-Gaussian assumption holds, Lemma \ref{lem:subgauss} instead gives 
\[
\P\left(\hat{Z} \neq Z\right) \leq K \exp\left( - \frac{\sep(p) }{50 v_{\max}}
\right).
\]
\end{proof}

\subsection{Proof of Theorem \ref{thm:main}} \label{app:main_proof}

\begin{proof}[Proof of Theorem \ref{thm:main}] Recall that 
\[
\delta = \frac{1}{n} \sum_{k=1}^n 1(\hat{z}_k \neq z_k),
\]
hence by Proposition \ref{prop:error_rate_bound}
\begin{align}
    \ex[\delta] & = \P(\hat{Z}_k \neq Z) \nonumber \\
    & \leq \P\left(\hat{Z}_k \neq Z \given \max_{z \neq v} R^{(p)}_{z,v} \leq \tfrac{1}{10} \right) 
    + \P\left(\max_{z \neq v} R^{(p)}_{z,v} > \tfrac{1}{10}  \right) \nonumber \\
    & \leq \frac{25 K \sigma_{\max}^2}{\sep(p)} + K^2  \max_{z \neq v} \P\left(R^{(p)}_{z,v} > \tfrac{1}{10}  \right). \label{eq:delta_mean_bound}
\end{align}
Both of the terms above tend to $0$, thus $\delta \overset{P}{\to} 0$.

Now rewrite the bound \eqref{eq:bias_bound} as 
\[
|\widehat{\beta}^{\s}_i - \hat{\beta}_i| \leq \sqrt{\delta} \underbrace{\left(\frac{2\sqrt{2}}{\rho^2 \sqrt{\alpha}} \frac{\|\by\|_2}{\|\bx_i\|_2} \right)}_{= L_n}
\]
From the argument above, $\sqrt{\delta} \overset{P}{\to} 0$. We will show 
that the second factor, $L_n$, tends to a constant, $L$, 
in probability under the stated assumptions. This will imply that 
\[ 
    |\widehat{\beta}^{\s}_i - \hat{\beta}_i| \overset{P}{\to} 0,
\]
which shows case (1). 

Observe first that 
\[
\|\bx_i\|_2^2 = \frac{1}{n} \sum_{k=1}^n x_{i,k}^2 \overset{P}{\to}
\ex[X_i^2] \in (0, \infty)
\]
by the Law of Large Numbers, using the i.i.d. assumption 
and the fact that $\ex[X_i^2] \in (0,\infty)$ by Assumption \ref{ass:mixture}. Similarly, 
$\|\by\|_2^2 \overset{P}{\to} \ex[Y] \in [0, \infty)$.

Turning to $\alpha$, we first see that by the Law of Large Numbers,
\[
\frac{n(z)}{n} \overset{P}{\to} \P(Z = z)
\]
for $n \to \infty$ and $z \in E$. Then observe that 
for any $z \in E$
\[
| \hat{n}(z)  - n(z) | \leq \sum_{k=1}^n |1(\hat{z}_k = z) - 1(z_k = z)| \leq  
\sum_{k=1}^n 1(\hat{z}_k \neq z_k) \leq n \delta. 
\]
Since $\delta \overset{P}{\to} 0$, also 
\[
\frac{\hat{n}(z)}{n} \overset{P}{\to} \P(Z = z),
\]
thus 
\[
\alpha = \frac{n_{\min}}{n} = \min \left\{ \frac{n(1)}{n}, \ldots, \frac{n(K)}{n},
\frac{\hat{n}(1)}{n}, \ldots, \frac{\hat{n}(K)}{n} \right\}
\overset{P}{\to} \min_{z \in E} \P(Z = z) \in (0, \infty).
\]

We finally consider $\rho$, and to this end we first see that 
\begin{align*}
\frac{1}{n} \| (I - P_{\bZ}) \bx_{i} \|_2^2 = 
\frac{1}{n} \sum_{k=1}^n (x_{i,k} - \overline{\mu}(z_k))^2 \overset{P}{\to} 
\ex\left[\sigma_i^2(Z) \right] \in (0, \infty).            
\end{align*}
Moreover, using Lemma \ref{lem:proj_norm}, 
\begin{align*}
\left| \| (I - P_{\hat{\bZ}}) \bx_i \|_2^2 - \| (I - P_{\bZ}) \bx_i \|_2^2 \right| 
& = \left|\| (P_{\hat{\bZ}} - P_{\bZ}) \bx_i \|_2^2 + 
2 |\langle (I - P_{\hat{\bZ}}) \bx_i , (P_{\hat{\bZ}} - P_{\bZ}) \bx_i \rangle \right| \\
& \leq \| P_{\hat{\bZ}} - P_{\bZ} \|_2^2\| \bx_i \|_2^2 + 
2 \| P_{\hat{\bZ}} - P_{\bZ} \|_2 \| \bx_i \|^2_2 \\
& \leq \left( \frac{2 \delta}{ \alpha}+ \sqrt{\frac{2\delta}{\alpha}} \right) \| \bx_i \|^2_2. 
\end{align*}
Hence 
\[
\rho \overset{P}{\to}  \frac{\ex\left[\sigma_i^2(Z) \right]}{\ex[X_i^2]} \in (0,\infty). 
\]

Combining the limit results, 
\[
L_n \overset{P}{\to}  L = \frac{2\sqrt{2}\ex[X_i^2]^2}{\ex\left[\sigma_i^2(Z)\right]^2 \sqrt{\min_{z \in E} \P(Z = z)}} 
\sqrt{\frac{\ex[Y^2]}{\ex[X_i^2]}} \in (0, \infty).
\]

To complete the proof, suppose first that $\frac{\sep(p)}{n} \to \infty$.
Then 
\[
\sqrt{n} |\widehat{\beta}^{\s}_i - \hat{\beta}_i| \leq \sqrt{n \delta} L_n 
\]
By \eqref{eq:delta_mean_bound} we have, under the assumptions given in case (2) 
of the theorem, that $n \delta \overset{P}{\to} 0$, and case (2) follows. 

Finally, in the sub-Gaussian case, and if just $h_n = \frac{\sep(p)}{\log(n)} \to \infty$, 
then we can replace \eqref{eq:delta_mean_bound} by the bound 
\[
    \ex[\delta] \leq K \exp\left( - \frac{\sep(p)}{50 v_{\max}} \right) + K^2  \max_{z \neq v} \P\left(R^{(p)}_{z,v} > \tfrac{1}{10}  \right).
\]
Multiplying by $n$, we get that the first term in the bound equals
\begin{align*}
K n \exp\left( - \frac{\sep(p)}{50 v_{\max}} \right)  
& = K \exp\left( - \frac{\sep(p)}{50 v_{\max}} + \log(n) \right)   \\
& = K \exp\left( \log (n) \left(1 - \frac{h_n}{50 v_{\max}} \right) \right)  \to 0
\end{align*}
for $n \to \infty$. We conclude that the relaxed 
growth condition on $p$ in terms of $n$ in the sub-Gaussian case is enough to 
imply $n \delta \overset{P}{\to} 0$, and case (3) follows. 

By the decomposition 
\[
\sqrt{n}(\widehat{\beta}^{\s}_i - \beta_i) = \sqrt{n} (\widehat{\beta}^{\s}_i - \widehat \beta_i) + \sqrt{n}(\hat\beta_i - \beta_i)
\]
it follows from Slutsky's theorem that in case (2) as well as case (3),
\[
\sqrt{n} (\widehat{\beta}^{\s}_i - \beta_i)  = \sqrt{n}(\widehat{\beta}_i - \beta_i) + o_P(1) \overset{\mathcal{D}}{\to} \mathcal{N}(0, w_i^2).
\]
\end{proof}

\section{Gaussian mixture models} \label{app:kakutani}
This appendix contains an analysis of a latent variable model with a 
finite $E$, similar to the one given by Assumption \ref{ass:mixture}, 
but with Assumption \ref{ass:mixture}(1) strengthened to
\[
X_i \mid Z = z \sim \mathcal{N}(\mu_i(z), \sigma_i^2(z)).
\]
Assumptions \ref{ass:mixture}(2), \ref{ass:mixture}(3) and \ref{ass:mixture}(4) are 
dropped, and the purpose is to understand precisely when 
Assumption \ref{ass:variable}(2) holds in this model. That is, when 
$Z$ can be recovered from $\bX_{-i}$. To keep notation simple, 
we will show when $Z$ can be recovered from $\bX$, but the 
analysis and conclusion is the same if we left out a single coordinate. 

The key to this analysis is a classical result due to Kakutani. As 
in Section \ref{sec:setup}, the conditional distribution of $\bX$ 
given $Z = z$ is denoted $P_z$, and the model assumption is that 
\begin{equation} \label{eq:product_gauss}
P_z = \bigotimes_{i=1}^\infty P^i_z
\end{equation}
where $P^i_z$ is the conditional distribution of $X_i$ given $Z = z$. For 
Kakutani's theorem below we do not need the Gaussian assumption; only that 
$P^i_z$ and $P^i_v$ are equivalent (absolutely continuous w.r.t. each other), and 
we let $\frac{\mathrm{d} P_z^i}{\mathrm{d} P_v^i}$ denote the Radon-Nikodym 
derivative of $P_z^i$ w.r.t. $P_v^i$.

\begin{theorem}[\cite{Kakutani}] \label{thm:kakutani}
Let $z, v \in E$ and $v \neq z$. Then $P_z$ 
and $P_v$ are singular if and only if 
\begin{equation}
    \sum_{i=1}^\infty - \log \int \sqrt{\frac{\mathrm{d} P_z^i}{\mathrm{d} P_v^i}}\ \mathrm{d} P_v^i = \infty.
\end{equation}
\end{theorem}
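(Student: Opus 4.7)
The plan is to prove Kakutani's dichotomy via a martingale argument based on the Radon--Nikodym derivatives on the finite-dimensional cylinder $\sigma$-algebras. Write $\rho_i = \int \sqrt{\mathrm{d}P_z^i/\mathrm{d}P_v^i}\,\mathrm{d}P_v^i$ for the Hellinger affinity, which satisfies $0 < \rho_i \le 1$ by Cauchy--Schwarz. Since $-\log \rho_i \ge 0$ and $\prod_i \rho_i = \exp\!\left(-\sum_i -\log \rho_i\right)$, the series in the statement diverges if and only if $\prod_{i=1}^\infty \rho_i = 0$. Thus it suffices to show that $P_z \perp P_v$ iff $\prod \rho_i = 0$.

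First I would introduce the filtration $\cF_n = \sigma(X_1,\ldots,X_n)$ and the likelihood ratio on $\cF_n$,
\[
L_n = \prod_{i=1}^n \frac{\mathrm{d}P_z^i}{\mathrm{d}P_v^i}(X_i), \qquad M_n = \sqrt{L_n}.
\]
By the product structure in \eqref{eq:product_gauss}, $(L_n)$ is a nonnegative martingale under $P_v$ with $\ex_{P_v}[L_n] = 1$, and the conditional-independence calculation gives the two key identities
\[
\ex_{P_v}[M_n] = \prod_{i=1}^n \rho_i, \qquad \ex_{P_v}[M_n^2] = \ex_{P_v}[L_n] = 1.
\]
Hence $(M_n)$ is bounded in $L^2(P_v)$, so it converges $P_v$-almost surely and in $L^2(P_v)$ to some $M_\infty$ with $\ex_{P_v}[M_\infty] = \prod_{i=1}^\infty \rho_i$.

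Now I would split into the two cases. If $\prod \rho_i > 0$, then $\ex_{P_v}[M_\infty] > 0$ and $L_n = M_n^2 \to M_\infty^2$ in $L^1(P_v)$, so $(L_n)$ is uniformly integrable under $P_v$. Standard arguments (the cylinder sets generate the product $\sigma$-algebra, so $L_n$ is the density of $P_z|_{\cF_n}$ w.r.t.\ $P_v|_{\cF_n}$, and UI plus the monotone class theorem extends this to the full $\sigma$-algebra) give $P_z \ll P_v$ with density $M_\infty^2$; by symmetry $P_z \sim P_v$, so they are not singular. Conversely, if $\prod \rho_i = 0$, then $\ex_{P_v}[M_\infty] = 0$ forces $M_\infty = 0$ and hence $L_n \to 0$ $P_v$-almost surely. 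Swapping the roles of $z$ and $v$, the analogous argument under $P_z$ yields $L_n^{-1} \to 0$ $P_z$-almost surely, equivalently $L_n \to \infty$ $P_z$-almost surely. The event $A = \{L_n \to 0\}$ then satisfies $P_v(A) = 1$ and $P_z(A) = 0$, establishing $P_z \perp P_v$.

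The main technical obstacle is the last step of Case 1: pushing the $P_v$-UI property of the $L_n$ from the cylinder $\sigma$-algebras $\cF_n$ up to the full product $\sigma$-algebra to conclude genuine absolute continuity of $P_z$ with respect to $P_v$. This requires verifying that the candidate density $M_\infty^2$ produces the correct measure on cylinder sets (which is immediate from $\ex_{P_v}[M_\infty^2 \mathbf{1}_B] = \lim_n \ex_{P_v}[L_n \mathbf{1}_B] = P_z(B)$ for $B \in \cF_n$) and then invoking a uniqueness argument via the $\pi$-$\lambda$ theorem on the generating algebra of cylinders.
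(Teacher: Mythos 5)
The paper does not prove this statement: Theorem \ref{thm:kakutani} is quoted from Kakutani's 1948 paper and used as a black box to derive Proposition \ref{prop:kakutani}, so there is no in-paper proof to compare against. Your proposal supplies the classical martingale proof of Kakutani's dichotomy, and the architecture is right: the reduction of the divergence condition to $\prod_i\rho_i=0$ versus $\prod_i\rho_i>0$, the identification of $L_n$ as the density of $P_z|_{\cF_n}$ with respect to $P_v|_{\cF_n}$, the singularity argument via $L_n\to 0$ $P_v$-a.s.\ together with $L_n\to\infty$ $P_z$-a.s.\ (using that the Hellinger affinity is symmetric in $z$ and $v$), and the $\pi$-$\lambda$ extension from the cylinder algebra are all correct.

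There is one genuine misstep. The process $(M_n)$ is a nonnegative \emph{supermartingale}, not a martingale, since $\ex_{P_v}[M_{n+1}\mid\cF_n]=\rho_{n+1}M_n\le M_n$, and $L^2$-boundedness of a supermartingale does not imply $L^2$ convergence. Your own proof contains the counterexample: in the singular case $M_n\to 0$ $P_v$-a.s.\ while $\ex_{P_v}[M_n^2]=1$ for every $n$, so $M_n$ certainly does not converge to its a.s.\ limit in $L^2$. The $L^2$ convergence is only needed (and only true) when $\prod_i\rho_i>0$, and there it requires the standard normalization: set $N_n=M_n/\prod_{i\le n}\rho_i$, which is a genuine nonnegative martingale with $\ex_{P_v}[N_n^2]=\left(\prod_{i\le n}\rho_i\right)^{-2}\le\left(\prod_{i=1}^\infty\rho_i\right)^{-2}<\infty$; Doob's $L^2$ martingale convergence theorem then gives $N_n\to N_\infty$ in $L^2$, and multiplying back by the convergent scalar sequence $\prod_{i\le n}\rho_i$ yields $M_n\to M_\infty$ in $L^2$, hence $L_n\to M_\infty^2$ in $L^1$ and the uniform integrability you invoke. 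With that one-line repair the argument is complete. (The identity $\ex_{P_v}[M_\infty]=\prod_i\rho_i$, which is all you use in the singular case, is fine as written: $L^2$-boundedness does give uniform integrability of $(M_n)$ itself and hence $L^1$ convergence.)
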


Note that 
\[
\mathrm{BC}_{z,v}^i = \int \sqrt{\frac{\mathrm{d} P_z^i}{\mathrm{d} P_v^i}}\ \mathrm{d} P_v^i 
\]
is known as the Bhattacharyya coefficient, while $- \log(\mathrm{BC}_{z,v}^i)$ 
and $\sqrt{1 - \mathrm{BC}_{z,v}^i}$ are known as the Bhattacharyya distance and the 
Hellinger distance, respectively, between $P^i_z$ and $P^i_v$. Note also 
that if $P^i_ z = h^i_z \cdot \lambda$ and $P^i_v = h^i_v \cdot \lambda$ for a reference 
measure $\lambda$, then 
\[
\mathrm{BC}_{z,v}^i = \int \sqrt{h_z^i h_v^i} \ \mathrm{d} \lambda.
\]

\begin{proposition} \label{prop:kakutani}
Let $P^i_z$ be the $\mathcal{N}(\mu_i(z), \sigma_i^2(z))$-distribution 
for all $i \in \nat$ and $z \in E$. Then $P_z$ 
and $P_v$ are singular if and only if either 
\begin{align} \label{eq:kakutani1}
    \sum_{i=1}^{\infty}  \frac{(\mu_{i}(z) - \mu_{i}(v))^2}{\sigma_{i}^2(z) + \sigma^2_{i}(v)} & = \infty
    \qquad \text{or} \\ \label{eq:kakutani2}
\sum_{i=1}^{\infty} \log\left(\frac{\sigma_{i}^2(z) + \sigma^2_{i}(v)}{2\sigma_{i}(z)\sigma_{i}(v)}\right)
& = \infty
\end{align}
    
\end{proposition}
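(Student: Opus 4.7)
The plan is to reduce the statement to Kakutani's theorem (Theorem \ref{thm:kakutani}) by computing the Bhattacharyya coefficient $\mathrm{BC}_{z,v}^i$ explicitly for two univariate Gaussians. Since $P^i_z$ and $P^i_v$ have Lebesgue densities $h^i_z$ and $h^i_v$, the coefficient equals $\int \sqrt{h^i_z h^i_v}\,\mathrm{d}\lambda$. A direct computation, completing the square in the exponent, gives
\begin{equation*}
\mathrm{BC}_{z,v}^i = \sqrt{\frac{2\sigma_i(z)\sigma_i(v)}{\sigma_i^2(z)+\sigma_i^2(v)}}\, \exp\!\left(-\frac{1}{4}\cdot\frac{(\mu_i(z)-\mu_i(v))^2}{\sigma_i^2(z)+\sigma_i^2(v)}\right).
\end{equation*}
I would carry this computation out once, carefully, by writing $h^i_z h^i_v$ as a Gaussian kernel in the variable with quadratic coefficient $\tfrac{1}{2}(\sigma_i^{-2}(z)+\sigma_i^{-2}(v))$ and normalising the remaining Gaussian integral.

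Taking $-\log$ then yields
\begin{equation*}
-\log \mathrm{BC}_{z,v}^i = \frac{1}{4}\cdot\frac{(\mu_i(z)-\mu_i(v))^2}{\sigma_i^2(z)+\sigma_i^2(v)} + \frac{1}{2}\log\!\left(\frac{\sigma_i^2(z)+\sigma_i^2(v)}{2\sigma_i(z)\sigma_i(v)}\right).
\end{equation*}
The key observation is that both terms on the right are non-negative: the first trivially, and the second by the AM--GM inequality, since $\sigma_i^2(z)+\sigma_i^2(v)\geq 2\sigma_i(z)\sigma_i(v)$. Therefore the sum $\sum_i -\log \mathrm{BC}_{z,v}^i$ diverges if and only if at least one of the two constituent series over $i$ diverges, and these two series are exactly \eqref{eq:kakutani1} (up to the harmless factor $\tfrac14$) and \eqref{eq:kakutani2} (up to the factor $\tfrac12$).

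By Theorem \ref{thm:kakutani}, divergence of $\sum_i -\log \mathrm{BC}_{z,v}^i$ is equivalent to singularity of $P_z$ and $P_v$, so combining the above yields the claimed characterisation. The only real obstacle is the Gaussian integral computation in the first step; once that is in hand, the non-negativity of both summands makes the splitting argument immediate, and no further analytic subtleties arise. I would not expect any issue with measurability or with \eqref{eq:product_gauss}, since these are built into the setup.
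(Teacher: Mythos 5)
Your proposal is correct and follows essentially the same route as the paper's proof: compute the Bhattacharyya coefficient of two univariate Gaussians by completing the square, take $-\log$, and observe that the resulting sum splits into the two series \eqref{eq:kakutani1} and \eqref{eq:kakutani2} up to positive constants, so that by Kakutani's theorem divergence of the total is equivalent to divergence of at least one of them. Your explicit remark that both summands are non-negative (via AM--GM for the log term) is the one step the paper leaves implicit, and it is indeed the point that makes the ``either/or'' splitting valid.
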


\begin{proof} Letting $\mu = \mu_i(z)$, $\nu = \mu_i(v)$, $\tau = 1/\sigma_i(z)$
and $\kappa = 1/\sigma_i(v)$ we find 
    
\begin{align*}
    \mathrm{BC}_{z,v}^i
    & =  \int \sqrt{
        \frac{\tau}{\sqrt{2\pi}}
        \exp(-\frac{\tau^2}{2}\qty(x-\mu)^2)
        \frac{\kappa}{\sqrt{2\pi}}
        \exp(-\frac{\kappa^2}{2}\qty(x-\nu)^2)
    }  \mathrm{d} x\\
    &= \sqrt{\frac{\tau\kappa}{2\pi}} \int \exp(
        -\frac{
            (\tau^2 + \kappa^2) x^2 
            - 2(\tau^2\mu + \kappa^2\nu) x 
            + (\tau^2\mu^2 + \kappa^2\nu^2)
        }{4}
    ) \mathrm{d} x \\
    &= \sqrt{\frac{\tau\kappa}{2\pi}} 
    \sqrt{\frac{4\pi}{\tau^2 + \kappa^2}}
    \exp(
        \frac{(\tau^2\mu + \kappa^2\nu)^2}
        {4(\tau^2 + \kappa^2)} 
        - \frac{\tau^2\mu^2 + \kappa^2\nu^2}{4}
    )\\
    &= \sqrt{\frac{2\tau\kappa}{\tau^2 + \kappa^2}}
    \exp(
        -\frac{\tau^2\kappa^2(\mu-\nu)^2}
        {4(\tau^2 + \kappa^2)} 
    ) \\
    & = \sqrt{\frac{2\sigma_i(z) \sigma_i(v)}{\sigma^2_i(z) + \sigma^2_i(z)}}
    \exp(
        -\frac{(\mu_i(z) - \mu_i(v))^2}
        {4(\sigma^2_i(z) + \sigma^2_i(z))} 
    ).
\end{align*}
Thus 
\[
\sum_{i=1}^{\infty} - \log \left(\mathrm{BC}_{z,v}^i \right) = 
  \frac{1}{2} \sum_{i=1}^\infty \log \left(\frac{\sigma^2_{i}(z) + \sigma_{i}^2(v)}{2\sigma_{i}(z)\sigma_{i}(v)}\right) + 
  \frac{1}{4}\sum_{i=1}^{\infty}   \frac{(\mu_{i}(z) - \mu_{i}(v))^2}{\sigma^2_{i}(z) + \sigma_{i}^2(v)},
\]
and the result follows from Theorem \ref{thm:kakutani}.
\end{proof}

\begin{corollary} \label{cor:kakutani}
Let $P^i_z$ be the $\mathcal{N}(\mu_i(z), \sigma_i^2(z))$-distribution 
for all $i \in \nat$ and $z \in E$. There is a mapping $f : \real^\nat \to E$ 
such that $Z = f(\bX)$ almost surely if and only if either \eqref{eq:kakutani1}
or \eqref{eq:kakutani2} holds. 
\end{corollary}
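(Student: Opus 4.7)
The plan is to reduce Corollary \ref{cor:kakutani} to Proposition \ref{prop:kakutani} via the standard fact that a finite-valued variable $Z$ can be recovered from $\bX$ (i.e., $Z = f(\bX)$ almost surely for some measurable $f : \real^\nat \to E$) if and only if the conditional distributions $(P_z)_{z \in E}$ of $\bX$ given $Z = z$ are pairwise mutually singular. Since Proposition \ref{prop:kakutani} already characterizes singularity of $P_z$ and $P_v$ (for the Gaussian products) by the disjunction \eqref{eq:kakutani1} or \eqref{eq:kakutani2}, the corollary will follow once this equivalence is in place for every pair $z \neq v$ in $E$.

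For the ``only if'' direction, given a measurable $f$ with $\P(f(\bX) = Z) = 1$, set $A_z = f^{-1}(\{z\})$. For any $z$ with $\P(Z = z) > 0$ one has $P_z(A_z) = \P(f(\bX) = z \mid Z = z) = 1$ and, for $v \neq z$, $P_v(A_z) = \P(f(\bX) = z \mid Z = v) = 0$, which witnesses the singularity of $P_z$ and $P_v$. States $z \in E$ with $\P(Z = z) = 0$ can be discarded without loss of generality, since the joint distribution does not constrain $P_z$ for such $z$ and they contribute nothing to the recovery event.

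For the ``if'' direction, assume $P_z$ and $P_v$ are mutually singular for all $z \neq v$. For each ordered pair $(z,v)$ with $z \neq v$ pick a measurable $B_{z,v} \subseteq \real^\nat$ with $P_z(B_{z,v}) = 1$ and $P_v(B_{z,v}) = 0$. Since $E = \{1, \ldots, K\}$ is finite, $A_z := \bigcap_{v \neq z} B_{z,v}$ satisfies $P_z(A_z) = 1$ and $P_v(A_z) = 0$ for $v \neq z$. A standard disjointification---fix an ordering of $E$ and replace $A_z$ by $\widetilde A_z := A_z \setminus \bigcup_{w < z} A_w$, which differs from $A_z$ only by a $P_z$-null set---produces pairwise disjoint sets with $P_z(\widetilde A_z) = 1$. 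Defining $f(\bx) = z$ on $\widetilde A_z$ and arbitrarily outside $\bigcup_z \widetilde A_z$ yields a measurable map satisfying $\P(f(\bX) = Z) = \sum_z \P(Z = z) P_z(\widetilde A_z) = 1$.

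There is no genuine obstacle; the argument is essentially bookkeeping. The only points of minor care are handling null states of $Z$ correctly and performing the disjointification so that the pairwise singularity witnesses assemble into a single recovery map. With this general equivalence in hand, applying Proposition \ref{prop:kakutani} pair-by-pair supplies the Gaussian-specific characterization and completes the proof of Corollary \ref{cor:kakutani}.
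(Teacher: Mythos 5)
Your proposal is correct and follows essentially the same route as the paper: both directions reduce the recovery statement to pairwise mutual singularity of the conditional product measures $(P_z)_{z\in E}$ and then invoke Proposition \ref{prop:kakutani}, with the ``only if'' direction handled via $A_z = f^{-1}(\{z\})$ exactly as in the paper. Your explicit disjointification of the singularity witnesses is a slightly more careful version of the paper's construction (which defines $f$ on possibly overlapping sets $A_z$, harmless since the overlaps are null), and your remark about null states addresses a positivity issue the paper's proof silently assumes away.
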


\begin{proof} If  either \eqref{eq:kakutani1} or \eqref{eq:kakutani2} holds, 
$P_z$ and $P_v$ are singular whenever $v \neq z$. This implies that there are 
measurable subsets $A_z \subseteq \real^\nat$ for $z \in E$ such that $P_z(A_z) = 1$
and $P_v(A_z) = 0$ for $v \neq z$. Setting $A = \cup_z A_z$ we see that 
\[
P(A) = \sum_{z} P_z(A) \P(Z = z) = \sum_{z} P_z(A_z) \P(Z = z) = 1.
\]
Defining the map $f : \real^\nat \to E$ by $f(\bx) = z$ if $\bx \in A_z$ (and arbitrarily on the complement of $A$) we see that $f(\bX) = Z$ almost surely. 

On the other hand, if there is such a mapping $f$, define $A_z = f^{-1}(\{z\})$ for all $z \in E$.
Then $A_z \cap A_v = \emptyset$ for $v \neq z$ and 
\begin{align*}
P_z(A_z) & = \frac{\P(\bX \in A_z, Z = z)}{\P(Z = z)} = \frac{\P(f(\bX) = z, Z = z)}{\P(Z = z)} \\
& = \frac{\P(f(\bX) = Z, Z = z)}{\P(Z = z)} = \frac{\P(Z = z)}{\P(Z = z)} = 1.
\end{align*}
Similarly,  for $v \neq z$
\begin{align*}
P_v(A_z) & = \frac{\P(\bX \in A_z, Z = v)}{\P(Z = v)} = \frac{\P(f(\bX) = z, Z = v)}{\P(Z = v)} \\
& = \frac{\P(f(\bX) \neq Z, Z = v)}{\P(Z = v)} = \frac{0}{\P(Z = v)} = 0.
\end{align*}
This shows that $P_z$ and $P_v$ are singular, and by Proposition \ref{prop:kakutani}, 
either \eqref{eq:kakutani1} or \eqref{eq:kakutani2} holds.
\end{proof}

\bibliographystyle{agsm}
\bibliography{bibliography}

\end{document}